\newcommand{\excise}[1]{}
\newtheorem{theorem}{Theorem}[section]
\newtheorem*{thm}{Theorem}
\newtheorem{lemma}[theorem]{Lemma}
\newtheorem{cor}[theorem]{Corollary}
\newtheorem{corollary}[theorem]{Corollary}
\newtheorem{prop}[theorem]{Proposition}
\newtheorem*{Propo}{Proposition}
\newtheorem*{definition*}{Definition}
\newtheorem{theorem/definition}[theorem]{Theorem/Definition}
\theoremstyle{definition}
\newtheorem{example}[theorem]{Example}
\newtheorem{remark}[theorem]{Remark}
\newtheorem{defn}[theorem]{Definition}
\newtheorem{definition}[theorem]{Definition}
\def\<{\langle}
\def\>{\rangle}
\renewcommand{\gets}{\longleftarrow}
\renewcommand{\geq}{\geqslant}
\newcommand{\PP}{\ensuremath{\mathbb{P}}}
\newcommand{\cB}{\ensuremath{\mathcal{B}}}
\newcommand{\cX}{\ensuremath{\mathcal{X}}}
\DeclareMathOperator{\codim}{codim}
\DeclareMathOperator{\colo}{color}
\DeclareMathOperator{\cone}{cone}\DeclareMathOperator{\Cox}{Cox}
\DeclareMathOperator{\Ext}{Ext}
\DeclareMathOperator{\Ann}{Ann}
\DeclareMathOperator{\Hom}{Hom}
\DeclareMathOperator{\link}{lk}
\DeclareMathOperator{\pdim}{pdim}
\DeclareMathOperator{\Pic}{Pic}
\DeclareMathOperator{\Spec}{Spec}
\DeclareMathOperator{\Tor}{Tor}
\DeclareMathOperator{\vdim}{vdim}
\DeclareMathOperator{\mathcolor}{color}
\DeclareMathOperator{\join}{join}
\numberwithin{equation}{section}
\begin{document}

\vspace*{-12mm}
\mbox{}
\title[Homological and combinatorial aspects of virtually Cohen--Macaulay sheaves]{Homological and combinatorial aspects of \\ virtually Cohen--Macaulay sheaves}

\author{Christine Berkesch}
\address{School of Mathematics \\
University of Minnesota.}
\email{cberkesc@umn.edu}

\author{Patricia Klein}
\address{School of Mathematics \\
University of Minnesota.}
\email{klein847@umn.edu}

\author{Michael C. Loper}
\address{Department of Mathematics \\
University of Wisconsin - River Falls.}
\email{michael.loper@uwrf.edu}

\author{Jay Yang}
\address{School of Mathematics \\
University of Minnesota.}
\email{jkyang@umn.edu}

\thanks{CB was partially supported by NSF Grants DMS 1661962 and 2001101. \\
\indent JY was partially supported by NSF RTG Grant 1745638. 
}

\begin{abstract} 
When studying a graded module $M$ over the Cox ring of a smooth projective toric variety $X$, there are two standard types of resolutions commonly used to glean information: free resolutions of $M$ and vector bundle resolutions of its sheafification.  Each approach comes with its own challenges.   There is geometric information that free resolutions fail to encode, while vector bundle resolutions can resist study using algebraic and combinatorial techniques.  Recently, Berkesch, Erman, and Smith introduced virtual resolutions, which capture desirable geometric information and are also amenable to algebraic and combinatorial study. The theory of virtual resolutions includes a notion of a virtually Cohen--Macaulay property, though tools for assessing which modules are virtually Cohen--Macaulay have only recently started to be developed.  

In this paper, we continue this research program in two related ways.   The first is that, when $X$ is a product of projective spaces, we produce a large new class of virtually Cohen--Macaulay Stanley--Reisner rings, which we show to be virtually Cohen--Macaulay via explicit constructions of appropriate virtual resolutions reflecting the underlying combinatorial structure.  The second is that, for an arbitrary smooth projective toric variety $X$, we develop homological tools for assessing the virtual Cohen--Macaulay property.  Some of these tools give exclusionary criteria, and others are constructive methods for producing suitably short virtual resolutions.  We also use these tools to establish relationships among the arithmetically, geometrically, and virtually Cohen--Macaulay properties.
\end{abstract}
\maketitle

\mbox{}
\vspace*{-16mm}

\setcounter{section}{1}
\section*{Introduction}
\label{sec:intro}

Let $X$ be a smooth projective toric variety over an algebraically closed field $k$ with Cox ring $S$ and irrelevant ideal $B$ (see \cite[\S5.2]{cox-little-schenck}). 
The Cox ring $S$ is a positively $\operatorname{Pic}(X)$-graded polynomial ring,  i.e., $\operatorname{Pic}(X) \cong \mathbb{Z}^d$, and the multidegrees of the variables of $S$ lie in a single open half-space of  $\mathbb{Z}^d$.
There is a correspondence between $\operatorname{Pic}(X)$-graded $B$-saturated modules $M$ over $S$ and sheaves $\widetilde{M}$ on $X$~\cite{audin,musson,cox:homog} (see \cite{mustata-toric} when $X$ is not smooth). 
Unfortunately, the numerics of the minimal $\operatorname{Pic}(X)$-graded free resolutions for such $S$-modules do not obviously provide many geometric insights for $M$ when $X$ is not projective space. 
For example, a minimal $\operatorname{Pic}(X)$-graded free resolution of $M$ may be significantly longer than the dimension of $X$. 
However, this failure appears to be a consequence of imposing too much algebraic structure on the resolution. 
Approaching the problem from the geometric perspective, vector bundle resolutions of $\widetilde{M}$ are bounded in length by the dimension of $X$, but vector bundles on $X$ are significantly more complicated than line bundles on $X$.  
A proposed solution comes from \cite{virtual-original}, in which the authors introduce a type of resolution of $M$ by free $S$-modules, which they call a \emph{virtual resolution}, that better captures geometrically meaningful properties of $\operatorname{Pic}(X)$-graded $S$-modules, such as unmixedness, well-behavedness of deformation theory, and regularity of tensor products.  
Because virtual resolutions are defined up to the sheafification of $M$, the object of study is intrinsically geometric. 
Because the resolutions themselves are in the category of $S$-modules, they are naturally amenable to algebraic techniques.  

Although virtual resolutions are desirable for their ability to encode geometric information, we do not yet have the wealth of tools for studying them that we do for studying graded free resolutions.  In particular, there are not yet many methods for constructing short virtual resolutions or for establishing the minimum possible length among the virtual resolutions of a chosen $\operatorname{Pic}(X)$-graded $S$-module $M$.  We provide some of each in this paper.

Our broad goal in this article is to work towards a rich understanding of \emph{virtually Cohen--Macaulay} modules (or virtually Cohen--Macaulay coherent sheaves) as an analogue to Cohen--Macaulay modules over the coordinate rings of affine or projective space.  
We provide two methods to construct short virtual resolutions either from longer virtual resolutions or from short resolutions of closely-related modules. These methods can be helpful in establishing that modules are virtually Cohen--Macaulay (see Propositions~\ref{prop:mapping-cone} and~\ref{prop:vreg-elt-quotient}).  
We also obtain homological obstructions to being virtually Cohen--Macaulay (see Section~\ref{sec:virtual-ext}).  Guiding these structural developments is our production of a large class of virtually Cohen--Macaulay Stanley--Reisner rings in Section~\ref{sec:triangles}.  
The results on this class are hard won through the careful application of Hochster's formula, interpreted in a virtual setting, together with an analysis of the spectral sequence associated to a certain nerve complex.  
This not only provides us with a new source of examples of virtually Cohen--Macaulay modules as we work to develop the theory, but also, given the difficulty of studying even Stanley--Reisner rings in this context, highlights the need for the advent of more virtual homological tools.

\subsection*{Acknowledgements}
\label{subsec:acknowledgements}

We would like to thank Daniel Erman and Gregory G. Smith for helpful conversations related to this work.  We are also grateful to the anonymous referee for valuable feedback on a previous version of this paper.

\section{Background and Statements of Main Results}
\label{sec:prelim}

Throughout this article, let $X$ be a smooth projective toric variety over the algebraically closed field $k$, and let $S = \Cox(X)$ with irrelevant ideal $B$.  All $S$-modules are assumed to be finitely generated and $\Pic(X)$-graded, and all sheaves are assumed to be coherent.  

Let $M$ be an $S$-module.  As in~\cite[Definition~1.1]{virtual-original}, a free graded $S$-complex $F_\bullet=[F_0\gets F_1\gets\dotsb]$ is a \emph{virtual resolution} of $M$ (or of $\widetilde{M}$) if the corresponding complex $\widetilde{F}_\bullet$ of vector bundles is a locally free resolution of the sheaf $\widetilde{M}$. 
Next, define the \emph{virtual dimension} of $M$, denoted $\vdim M$, 
to be the minimal length of a virtual resolution of $M$.  For products of projective spaces, there is an inequality $\vdim M\ge\codim M$ (\cite[Proposition~2.5]{virtual-original}); in light of this and an analogue to the affine case, we say that $M$ is \emph{virtually Cohen--Macaulay} if $\widetilde{M} \neq 0$ and $\vdim M = \codim M$, the minimum possible.  
We say that a subscheme $V\subset X$ is \emph{virtually Cohen--Macaulay} if its Cox ring is virtually Cohen--Macaulay as an $S$-module.  Although there is a precise description in the literature for when complexes are virtual resolutions (see \cite{Lop}), little is known about how to 
assess the virtual dimension of a module 
or how to  construct virtual resolutions of minimal length, even when that minimal length is known.

In this paper, we construct virtual resolutions of minimal length for a family of Stanley--Reisner rings in order to show that they are virtually Cohen--Macaulay.  Before stating that result, we review the \emph{Stanley--Reisner correspondence} between simplicial complexes and squarefree monomial ideals.  For a detailed introduction, we refer the reader to \cite{MS04}. 

\begin{definition*}
Let $\Delta$ be a simplicial complex on $\{1,2,\dots,n\}$ and $R = k[x_1,x_2, \ldots, x_n]$.  Define the \emph{Stanley--Reisner ideal} of $\Delta$ to be 
\[
I_\Delta = \left\< x_{i_1}x_{i_2} \cdots x_{i_k} \mid \{i_1, \ldots, i_k\} \notin \Delta \right\>
\] 
and the \emph{Stanley--Reisner ring} of $\Delta$ to be $R/I_\Delta$.
\end{definition*}

We now state our main result on the existence of a new family of virtually Cohen--Macaulay rings (see Theorem~\ref{thm:monomial-vcm}). 

\begin{thm}
Let $S$ be the Cox ring of $X = \PP^{n_1} \times \PP^{n_2}\times \cdots \times \PP^{n_r}$. 
If $\Delta$ is an $r$-dimensional simplicial complex and the variety $V(I_\Delta)\subseteq X$ is equidimensional, 
then $S/I_{\Delta}$ is virtually Cohen--Macaulay. 
\end{thm}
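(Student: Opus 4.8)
The plan is to strip $\Delta$ down to a purely-coloured combinatorial core and then build a virtual resolution of $S/I_\Delta$ one facet at a time using the mapping-cone machinery of Proposition~\ref{prop:mapping-cone}.

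First I would normalize. Write the variables of $S$ as $x_{i,j}$ with $1\le i\le r$ and $0\le j\le n_i$, so the vertex set of $\Delta$ is partitioned into colour classes $C_1,\dots,C_r$ with $|C_i|=n_i+1$; call a face $\sigma$ of $\Delta$ \emph{relevant} if $\sigma\cap C_i\ne\emptyset$ for every $i$. If $\sigma$ is not relevant, the coordinate prime $P_\sigma=\langle x_a : a\notin\sigma\rangle$ contains some $\mathfrak m_i=\langle x_{i,0},\dots,x_{i,n_i}\rangle\supseteq B$, so $(P_\sigma : B)=S$; since $B$-saturation commutes with finite intersections, $I_\Delta^{\mathrm{sat}}=\bigcap P_\sigma$, the intersection over the relevant facets $\sigma$ of $\Delta$, and this is the Stanley--Reisner ideal of the subcomplex $\Delta'$ generated by those facets. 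As $\vdim$ and $\codim$ depend only on the sheaf $\widetilde{S/I_\Delta}=\widetilde{S/I_{\Delta'}}$, I may replace $\Delta$ by $\Delta'$, and so assume every facet of $\Delta$ is relevant (and that at least one exists, since $\widetilde{S/I_\Delta}\ne0$). A relevant facet $\sigma$ of size $|\sigma|=m$ cuts out $V(P_\sigma)=\prod_i\PP^{\,|\sigma\cap C_i|-1}\subseteq X$, of dimension $m-r$, so equidimensionality of $V(I_\Delta)$ forces a common facet size $m$; relevance gives $m\ge r$, and because every facet is a face of the $r$-dimensional complex $\Delta$ we also get $m\le r+1$. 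Hence $c:=\codim(S/I_\Delta)=\dim X-(m-r)=N-m$, where $N=\dim S$, and --- crucially --- $\dim X-1=N-r-1\le N-m=c$. This last inequality is the one place the hypothesis $\dim\Delta=r$ is used.

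For a single facet $\sigma$ of size $m$, the $N-m=c$ variables outside $\sigma$ form a regular sequence generating $P_\sigma$, so the Koszul complex on them is a free, hence virtual, resolution of $S/P_\sigma$ of length $c$; with \cite[Proposition~2.5]{virtual-original} this gives $\vdim(S/P_\sigma)=c$. Next I would order the facets as $\sigma_1,\dots,\sigma_t$, set $\Delta_j=\langle\sigma_1,\dots,\sigma_j\rangle$, and use the Stanley--Reisner identities $I_{\Delta_j}=I_{\Delta_{j-1}}\cap P_{\sigma_j}$ and $I_{\Delta_{j-1}}+P_{\sigma_j}=I_{W_j}$, where $W_j:=\Delta_{j-1}\cap\langle\sigma_j\rangle$, to produce the short exact sequence
\[
0\longrightarrow S/I_{\Delta_j}\longrightarrow S/I_{\Delta_{j-1}}\oplus S/P_{\sigma_j}\longrightarrow S/I_{W_j}\longrightarrow 0
\]
with surjective right-hand map. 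The key structural observation is that $W_j$ is a proper subcomplex of the $(m-1)$-simplex $\langle\sigma_j\rangle$, so each facet $\rho$ of $W_j$ has at most $m-1\le r$ vertices; such a $\rho$ is either non-relevant, so $V(P_\rho)\subseteq V(B)$, or a transversal of the colour classes of size exactly $r$, so $V(P_\rho)$ is a single reduced coordinate point. Distinct coordinate points of $X$ are disjoint, so $\widetilde{S/I_{W_j}}$ is either $0$ or a finite direct sum of skyscraper sheaves; in the latter case the corresponding direct sum of Koszul complexes is a virtual resolution of $S/I_{W_j}$ of length $\dim X$, and in either case $\vdim(S/I_{W_j})-1\le\dim X-1\le c$. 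Feeding the short exact sequence into Proposition~\ref{prop:mapping-cone} yields
\[
\vdim(S/I_{\Delta_j})\le\max\bigl\{\vdim(S/I_{\Delta_{j-1}}),\ \vdim(S/P_{\sigma_j}),\ \vdim(S/I_{W_j})-1\bigr\}\le\max\bigl\{\vdim(S/I_{\Delta_{j-1}}),\ c\bigr\}.
\]
Since $\vdim(S/I_{\Delta_1})=\vdim(S/P_{\sigma_1})=c$, induction on $j$ gives $\vdim(S/I_\Delta)=\vdim(S/I_{\Delta_t})\le c$; together with $\vdim(S/I_\Delta)\ge\codim(S/I_\Delta)=c$ from \cite[Proposition~2.5]{virtual-original}, this forces $\vdim(S/I_\Delta)=\codim(S/I_\Delta)$, so $S/I_\Delta$ is virtually Cohen--Macaulay.

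I expect the main obstacle to be the structural claim in the previous paragraph: that every overlap complex $W_j$ is thin enough that $V(I_{W_j})$ meets $X$ in only finitely many points, hence $\vdim(S/I_{W_j})\le\dim X$. This is precisely what makes the mapping-cone estimate close, and it is where $\dim\Delta=r$ --- via the inequality $\dim X-1\le c$ --- is doing the real work; without a bound on $\dim\Delta$ the overlap complexes could be too large and the induction would break. If one wanted a single explicit virtual resolution of $S/I_\Delta$ in place of the iterated mapping cone, one could instead glue the Koszul resolutions of the $V(P_\sigma)$ along the nerve of the cover $\{V(P_\sigma)\}$ and run a Hochster-type computation to verify that the homology of the resulting total complex is $B$-torsion above homological degree $c$; that argument is heavier but is the one that also outputs the explicit resolutions.
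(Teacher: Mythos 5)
Your reduction to the case where every facet is relevant, and the observation that equidimensionality forces a common facet size with $\dim X-1\le c$, track the paper's own first steps. But the engine of your induction has a genuine gap: Proposition~\ref{prop:mapping-cone} does not say what you are using it for. That proposition shortens a given virtual resolution of a single module $M$ by mapping its dual to a free resolution of $\Ext^t(M,S)$ under the hypotheses $\Ext^t(M,S)^\sim=0$ and $G_{-2}=0$; it is not a statement about short exact sequences and does not yield the inequality $\vdim(S/I_{\Delta_j})\le\max\{\vdim(S/I_{\Delta_{j-1}}),\vdim(S/P_{\sigma_j}),\vdim(S/I_{W_j})-1\}$. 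The statement you actually need --- a ``virtual horseshoe/mapping-cone'' bound for the kernel in $0\to A\to B\oplus B'\to C\to 0$ --- is nowhere established, and its natural proof breaks in two places. First, to form a cone you must lift the module (or sheaf) map $S/I_{\Delta_{j-1}}\oplus S/P_{\sigma_j}\to S/I_{W_j}$ to a chain map from your virtual resolution of the middle term to a length-$(\dim X)$ virtual resolution of $S/I_{W_j}$; the comparison theorem requires the target complex to be acyclic in positive degrees, and the complexes produced by your own induction (and virtual resolutions in general) have higher homology that is merely irrelevant, not zero, so the lift can fail, and sheaf-level maps need not be realized by chain maps of the chosen free complexes at all. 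Second, even when a chain map exists, the cone computes the kernel $S/I_{\Delta_j}$ in homological degree $1$; after shifting, a free summand (the $G_0$ of the resolution of $S/I_{W_j}$) survives in degree $-1$ with nonzero sheafification, so you do not directly obtain a virtual resolution of $S/I_{\Delta_j}$ of the claimed length. The projective-dimension analogue $\pdim A\le\max\{\pdim B,\pdim C-1\}$ is proved via the long exact sequence of $\Ext$, and the paper's Corollary~\ref{cor:vanishingExt} only gives the necessary direction, not a construction, so there is no off-the-shelf virtual substitute. You also cannot dodge the lifting problem by taking an honest free resolution of the saturation of $S/I_{W_j}$: disjoint unions of coordinate points are typically not arithmetically Cohen--Macaulay (compare Example~\ref{ex:disjoint-lines}), so that resolution need not have length $\dim X$. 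A smaller slip: $\codim$ is not an invariant of the sheaf, so ``replace $\Delta$ by $\Delta'$'' needs the extra remark that some facet of maximal size is relevant.

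For comparison, the paper avoids this induction on facets entirely: it enlarges $\Delta$ by irrelevant faces (the skeleton $\cB_r$, or a join/cone construction), proves via Reisner's criterion --- using Lemma~\ref{lem:irrelevant-almost-cm}, a Mayer--Vietoris/nerve spectral sequence, and the interior/exterior face analysis of Lemma~\ref{lem:twoface} --- that the enlarged complex is honestly Cohen--Macaulay, and then notes that the resulting free resolution is a virtual resolution of $S/I_\Delta$ because the two ideals have the same sheafification; disconnected pieces are handled by Proposition~\ref{prop:disjoint-support}. If you want to salvage your facet-by-facet scheme, you would first have to formulate and prove the virtual mapping-cone lemma for short exact sequences (with hypotheses strong enough to guarantee the chain-map lift and to dispose of the stray degree $-1$ term), which is essentially new work not contained in the paper.
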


Relationships between $\vdim M$ and $\dim X$ have been of interest since the introduction of virtual resolutions.
In~\cite[Proposition~1.2, Theorem 5.1]{virtual-original}
a Hilbert Syzygy Theorem-type bound, $\vdim M \le \dim X$, was given for an arbitrary $\Pic(X)$-graded $S$-module $M$ when $X$ is a product of projective spaces and for an arbitrary punctual scheme in any smooth projective toric variety $X$. 
Further, \cite{yang-monomial} shows that $\vdim S/I \le \dim X$ when $I$ is a relevant (i.e., $B^t \not\subseteq I$ for all $t \geq 1$) monomial ideal of $S$ and $X$ is a smooth projective toric variety.  Our new result most directly compares with a similar theorem in the case of pure and balanced simplicial complexes, which are necessarily of dimension $r-1$ (see \cite[Theorem 1.3]{reu2019}). 
Our proof is constructive, and we illustrate its use in building explicit resolutions in Examples~\ref{ex:disjoint-lines} and~\ref{ex:monomial-nonCM-components}.

Our second construction of short virtual resolutions for the purpose of realizing the virtual Cohen--Macaulay property comes by way of a mapping cone.  It is precisely stated and proved as Proposition~\ref{prop:mapping-cone} and summarized below.   There is not a directly analogous strategy for shortening locally free resolutions over $\mathbb{P}^n$.  As such, this result is an example of a tool that is new to the virtual setting, rather than being a modification of a tool from the projective setting.

\begin{Propo}
Let $F_\bullet$ be a virtual resolution of an $S$-module $M$ of length $t$ such that $\Ext^t(M,S)^\sim = 0$. If $\Ext^t(M,S)$ admits a free resolution of length at most $t+1$, then we can construct a virtual resolution of $M$ of length $t-1$. 
\end{Propo}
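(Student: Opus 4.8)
The plan is to reinterpret the hypotheses as saying that $\widetilde M$ has small \emph{local} projective dimension, then to trade the short free resolution of $\Ext^t_S(M,S)$ against the top of $F_\bullet$, and finally to recover a virtual resolution of $M$ by dualizing. First I would sheafify: since $\widetilde F_\bullet$ is a length-$t$ locally free resolution of $\widetilde M$ we have $\sExt^i_{\cO_X}(\widetilde M,\cO_X)=0$ for $i>t$, and the hypothesis $\Ext^t_S(M,S)^\sim=0$ says that $\sExt^t_{\cO_X}(\widetilde M,\cO_X)=0$ as well. Because $X$ is smooth each $\cO_{X,x}$ is regular, so the vanishing of $\sExt^i(\widetilde M,\cO_X)$ for all $i\ge t$ forces $\pdim_{\cO_{X,x}}\widetilde M_x\le t-1$ for every $x\in X$. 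Equivalently, writing $\Omega=\coker(\partial_t\colon F_t\to F_{t-1})$, whose sheafification is the $(t-1)$st syzygy sheaf of $\widetilde F_\bullet$, the sheaf $\widetilde\Omega$ is locally free, so that $\widetilde F_0\gets\dotsb\gets\widetilde F_{t-2}\gets\widetilde\Omega\gets 0$ is already a length-$(t-1)$ locally free resolution of $\widetilde M$; the difficulty is that $\widetilde\Omega$ need not be a direct sum of line bundles, so $F_\bullet$ cannot be shortened termwise and must be altered globally.

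The key step is to bound $\pdim_S\Omega^*$. Dualizing $0\to F_t\xrightarrow{\partial_t}F_{t-1}\to\Omega\to 0$ yields the exact sequence $0\to\Omega^*\to F_{t-1}^*\xrightarrow{\partial_t^*}F_t^*\to\Ext^1_S(\Omega,S)\to 0$, and dimension shifting identifies $\Ext^1_S(\Omega,S)$ with $\Ext^t_S(M,S)$ (up to $B$-torsion when $F_\bullet$ is only a virtual resolution). Thus $\Omega^*$ is a second syzygy of $\Ext^t_S(M,S)$, so the hypothesis that $\Ext^t_S(M,S)$ has a free resolution of length at most $t+1$ gives $\pdim_S\Omega^*\le t-1$. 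Concretely, one can take a free resolution $G_\bullet$ of $\Ext^t_S(M,S)$ of length at most $t+1$, lift the identity to a comparison map from the two-term complex $[F_{t-1}^*\xrightarrow{\partial_t^*}F_t^*]$ into $G_\bullet$, and read off from the mapping cone an explicit free resolution $H_\bullet$ of $\Omega^*$ of length at most $t-1$.

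Now I would reassemble. Consider the truncated dual complex $C^\bullet:=\tau_{\le t-1}\bigl(\Hom_S(F_\bullet,S)\bigr)=[F_0^*\to\dotsb\to F_{t-2}^*\to\Omega^*]$, a cochain complex of length $t-1$ whose only non-free term is $\Omega^*$. Replacing $\Omega^*$ by $H_\bullet$, via the total complex of the evident double complex, produces a cochain complex $T^\bullet$ of \emph{free} $S$-modules, still of length $t-1$ because $H_\bullet$ has length at most $t-1$, and quasi-isomorphic to $C^\bullet$. Since $\sExt^i_{\cO_X}(\widetilde M,\cO_X)=0$ for $i\ge t$, the inclusion $C^\bullet\to\Hom_S(F_\bullet,S)$ sheafifies to a quasi-isomorphism, so $\widetilde{T^\bullet}$ represents $R\cHom_{\cO_X}(\widetilde M,\cO_X)$ as a bounded complex of vector bundles. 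Setting $G'_\bullet:=\Hom_S(T^\bullet,S)$, a length-$(t-1)$ complex of free $S$-modules, its sheafification is $\cHom_{\cO_X}(\widetilde{T^\bullet},\cO_X)\simeq R\cHom_{\cO_X}\!\bigl(R\cHom_{\cO_X}(\widetilde M,\cO_X),\cO_X\bigr)\simeq\widetilde M$ by biduality on the smooth variety $X$. Hence $\widetilde{G'_\bullet}$ is a locally free resolution of $\widetilde M$, i.e.\ $G'_\bullet$ is a virtual resolution of $M$ of length $t-1$.

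The step I expect to be the main obstacle is the bound $\pdim_S\Omega^*\le t-1$ when $F_\bullet$ is genuinely only a virtual resolution: then $H^t\bigl(\Hom_S(F_\bullet,S)\bigr)=\coker(\partial_t^*)$ need not equal $\Ext^t_S(M,S)$, agreeing with it only up to $B$-torsion, so one must check that the projective-dimension hypothesis still transfers (this is immediate when $F_\bullet$ is part of an honest free resolution of $M$, but needs a small additional argument in general). The remaining points—building the total complex $T^\bullet$ with the homological degrees arranged so that its length is exactly $t-1$, and matching the biduality identification to the chosen indexings—are routine but must be done carefully; what makes the length come out right is, on the one hand, the vanishing $\sExt^{\ge t}_{\cO_X}(\widetilde M,\cO_X)=0$, which permits the truncation at $t-1$, and on the other the identity $\pdim_S\Omega^*=\pdim_S\Ext^t_S(M,S)-2$, which bounds the length of $H_\bullet$.
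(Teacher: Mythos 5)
Your reduction hinges on the identification $\Ext^1_S(\Omega,S)\cong\Ext^t_S(M,S)$, i.e.\ on transferring the hypothesis $\pdim_S\Ext^t_S(M,S)\le t+1$ to the module $\coker(\partial_t^*)=H^t(\Hom_S(F_\bullet,S))$ so as to conclude $\pdim_S\Omega^*\le t-1$. You flagged this step yourself, but it is a genuine gap rather than a routine check: when $F_\bullet$ is only a virtual resolution, $\coker(\partial_t^*)$ and $\Ext^t_S(M,S)$ agree only after sheafification (here both are supported on $V(B)$), and two modules with the same sheafification can have completely different projective dimensions. Nothing in the hypotheses controls $\pdim_S\coker(\partial_t^*)$ -- for instance $\Ext^t_S(M,S)$ could even vanish while $\coker(\partial_t^*)$ is a nonzero module supported on $V(B)$ of large projective dimension -- and in that case your resolution $H_\bullet$ of $\Omega^*$, hence the total complex $T^\bullet$, is too long and no length-$(t-1)$ complex results. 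So the ``small additional argument'' you hope for is not available at this level of generality; the length hypothesis must be imposed on the module computed from $F_\bullet$ itself. This is exactly how the paper's formal statement (Proposition~\ref{prop:mapping-cone}) is phrased: the resolution $G^*_\bullet$ there is chosen so that its last map is $\psi_t^*=\varphi_t^*$, i.e.\ it resolves $\coker(\varphi_t^*)$, and the length condition is ``$G_{-2}=0$''; the introduction's wording ``$\Ext^t(M,S)$ admits a free resolution of length at most $t+1$'' is shorthand for that. With the hypothesis read this way, your bound $\pdim_S\Omega^*\le t-1$ does follow (as a second syzygy of $\coker\partial_t^*$) and the rest of your argument can be completed.

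Modulo that repair, your route is genuinely different from the paper's and essentially sound, but heavier: you truncate the dual complex at $\Omega^*=\ker\partial_t^*$, splice in a free resolution of $\Omega^*$, dualize back, and invoke biduality of perfect complexes on the smooth variety $X$. The paper instead lifts a comparison map to get $\alpha\colon G_\bullet\to F_\bullet$, takes $\cone(\alpha)$, and reads off from the long exact sequence that the homology of the cone differs from that of $F_\bullet$ only by Ext modules of a module supported on $V(B)$, so the cone is again a virtual resolution; the length drop comes from cancelling the duplicated terms in degrees $t$ and $t-1$ (this is where $\psi_t=\varphi_t$ is used). That argument stays entirely at the level of graded $S$-complexes and needs no derived-category duality. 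Two small points in your write-up also deserve a line: the injectivity of $\partial_t$ (needed for your short exact sequence defining $\Omega$) is automatic, since $\ker\partial_t$ is a submodule of a free module over a domain whose sheafification vanishes, hence is zero; and the ``evident double complex'' replacing $\Omega^*$ by $H_\bullet$ should be made precise as the cone over a lifted chain map -- that is, the same splicing device the paper uses.
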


Additionally, we propose a notion of a \emph{virtually regular element} (see Definition~\ref{def:vreg-element}) and show (as Proposition~\ref{prop:vreg-elt-quotient}) that it can be used to produce virtual resolutions. 

\begin{Propo}
If $M$ is a virtually Cohen--Macaulay $S$-module and $f$ is a virtually regular element on $M$, then $M/fM$ is virtually Cohen--Macaulay.
\end{Propo}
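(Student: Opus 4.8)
The plan is to produce a virtual resolution of $M/fM$ that is exactly one step longer than a minimal one for $M$, by tensoring with the length-one Koszul complex on $f$, and then to check that $\codim(M/fM) = \codim M + 1$ so that the two lengths coincide.

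First I would invoke the hypothesis that $M$ is virtually Cohen--Macaulay to fix a virtual resolution $F_\bullet$ of $M$ with $\operatorname{length} F_\bullet = c$, where $c \coloneq \codim M$, and form
\[
G_\bullet \coloneq F_\bullet \otimes_S \bigl[\, S \xleftarrow{f} S(-\deg f) \,\bigr].
\]
This is a complex of free $S$-modules with $G_n = F_n \oplus F_{n-1}(-\deg f)$, hence of length $c+1$ with top term $F_c(-\deg f)$. The crux is to show that $\widetilde{G}_\bullet$ is a locally free resolution of $\widetilde{M/fM}$. Since sheafification is exact and commutes with tensor products, $\widetilde{G}_\bullet = \widetilde{F}_\bullet \otimes_{\cO_X} \bigl[\, \cO_X \xleftarrow{f} \cO_X(-\deg f) \,\bigr]$, where $\widetilde{F}_\bullet$ is a locally free resolution of $\widetilde{M}$ and the two-term factor is a locally free resolution of $\cO_X / f\cO_X$ (multiplication by $f$ is injective on $\cO_X$ because $X$ is integral). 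Therefore the homology of $\widetilde{G}_\bullet$ computes $\mathcal{T}or^{\cO_X}_\bullet(\widetilde{M}, \cO_X/f\cO_X)$, which from the two-term resolution is $\widetilde{M}/f\widetilde{M} = \widetilde{M/fM}$ in homological degree $0$, is $\ker\bigl(\widetilde{M}(-\deg f) \xrightarrow{f} \widetilde{M}\bigr)$ in degree $1$, and vanishes above. The hypothesis that $f$ is virtually regular on $M$ is precisely what makes multiplication by $f$ injective on the sheaf $\widetilde{M}$, so the degree-$1$ term vanishes and $G_\bullet$ is a virtual resolution of $M/fM$; hence $\vdim(M/fM) \le c+1$.

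Next I would pin down $\codim(M/fM)$. Because $f$ is a nonzerodivisor on $\widetilde{M}$, it lies in no associated prime of $\widetilde{M}$ and in particular vanishes identically on no irreducible component of $\supp \widetilde{M}$; since each such component is reduced and irreducible, Krull's principal ideal theorem shows that every component of $\supp \widetilde{M/fM} = \supp \widetilde{M} \cap V(f)$ has codimension at least $c+1$ in $X$, so $\codim(M/fM) \ge c+1$. As $\widetilde{M/fM} \neq 0$ --- which is part of $f$ being virtually regular, and is in any case needed for the statement to be non-vacuous --- this support is nonempty, so $M/fM$ is a legitimate candidate for the virtual Cohen--Macaulay property. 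Combining the two bounds with the universal inequality $\vdim \ge \codim$ gives
\[
c+1 \ \le\ \codim(M/fM) \ \le\ \vdim(M/fM) \ \le\ c+1 ,
\]
so equality holds throughout and $M/fM$ is virtually Cohen--Macaulay.

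The step I expect to be the main obstacle is the acyclicity argument in the second paragraph: one must be certain that ``virtually regular'' is exactly the hypothesis needed to kill $\mathcal{T}or^{\cO_X}_1(\widetilde{M}, \cO_X/f\cO_X)$, track the twist by $\deg f$ correctly through the Koszul tensor product, and verify that no $B$-torsion in the homology of the unsheafified complex $G_\bullet$ obstructs anything after sheafification --- in other words, that the only genuinely geometric constraint in play is the one encoded by virtual regularity. The codimension bookkeeping in the third paragraph is then routine.
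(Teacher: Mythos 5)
Your first half is, in substance, the paper's own proof of Proposition~\ref{prop:vreg-elt-quotient}: the paper forms the total complex of $F_\bullet\otimes_S[S\xleftarrow{\ f\ }S]$, identifies $H_0$ with $M/fM$ up to sheafification using the irrelevance of $\Ann_M f$, and checks that the higher homology is irrelevant; your sheaf-level computation of the Tor sheaves against $\cO_X/f\cO_X$ is an equivalent and clean way to make that check, and it correctly yields $\vdim(M/fM)\le \codim M+1$. (Minor point: you need $f\neq 0$ for the two-term factor to be a resolution of $\cO_X/f\cO_X$; this is automatic, since $f=0$ would make $\Ann_M f=M$ irrelevant and hence $\widetilde{M}=0$.)

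The gap is in the codimension step, and it comes from a misreading of Definition~\ref{def:vreg-element}. There, ``$f$ is virtually regular on $M$'' means \emph{both} that $\Ann_M f$ is irrelevant \emph{and} that $\dim M=1+\dim M/fM$; you use only the first clause and assert that virtual regularity ``is precisely'' injectivity of $f$ on $\widetilde{M}$. With the actual definition, $\codim(M/fM)=\codim M+1$ is immediate and your third paragraph is unnecessary. Without the dimension clause, your Krull-type argument does not prove that equality: it controls only the components of $\supp\widetilde{M}\cap V(f)$ visible on $X$, whereas $\codim(M/fM)$ is computed from $\supp(M/fM)\subseteq\Spec S$, which also contains irrelevant components of $\supp M$ (the module is not assumed $B$-saturated), and $f$ may vanish identically on one of them. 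Concretely, on $X=\PP^1\times\PP^3$ let $M=S/(\langle y_0,y_1\rangle\cap\langle x_0,x_1\rangle)$, which is virtually Cohen--Macaulay of codimension $2$ (the Koszul complex on $y_0,y_1$ is a virtual resolution), and take $f=x_0$: then $\Ann_M f=\langle y_0,y_1\rangle/(\langle y_0,y_1\rangle\cap\langle x_0,x_1\rangle)$ is killed by $\langle x_0,x_1\rangle$, hence irrelevant, yet $\dim M/fM=\dim M$ because $V(x_0,x_1)\subseteq\supp(M/fM)$; so the codimension does not rise, and $f$ is simply not virtually regular in the paper's sense. Relatedly, $\widetilde{M/fM}\neq 0$ is \emph{not} part of the definition either: the precise statement in the paper concludes that $M/fM$ is virtually Cohen--Macaulay \emph{or irrelevant}, and Example~\ref{ex:vreg-element} ends with an irrelevant quotient by a virtually regular element. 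To repair your write-up, keep your second paragraph, obtain $\codim(M/fM)=\codim M+1$ directly from the dimension clause of Definition~\ref{def:vreg-element}, and add the ``or irrelevant'' caveat (or note explicitly that you are assuming $\widetilde{M/fM}\neq 0$).
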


In Example~\ref{ex:vreg-element}, we use Proposition~\ref{prop:mapping-cone} to show that a particular squarefree monomial ideal defines a virtually Cohen--Macaulay quotient ring.  We then quotient by a sequence of virtually regular elements to arrive at a virtually Cohen--Macaulay quotient ring outside of the squarefree monomial setting.  This virtually Cohen--Macaulay quotient ring has an embedded associated prime, which we notice is irrelevant.

One may note that if $M$ is an arithmetically Cohen--Macaulay $S$-module, then $M$ is virtually Cohen--Macaulay and that, if $M$ is virtually Cohen--Macaulay, then $M$ is \emph{geometrically Cohen--Macaulay} (Definition~\ref{def:geomCM}, Proposition~\ref{prop:aCM=>vCM=>gCM}).  Additionally, if $M$ is virtually Cohen--Macaulay, then, if it has any associated primes of height other than $\codim M$, they must be irrelevant (Proposition~\ref{prop:equidim}), i.e., a virtually Cohen--Macaulay module is virtually unmixed.  Through Examples~\ref{ex:disjoint-lines} and~\ref{ex:tangentbundle}, we show that all of these implications are strict.

In addition to the tools above, we also provide some homological tools that provide exclusionary criteria for a module to have the virtually Cohen--Macaulay property.  
We also note that because the corresponding complex of sheaves is a locally free resolution, any virtual resolution can be used to compute $\Ext$ and $\Tor$ modules up to sheafification (as Propositions~\ref{prop:vExt} and~\ref{prop:vTor}), which implies the following (as Corollaries~\ref{cor:vanishingExt} and~\ref{cor:vanishingTor}). 

\begin{Propo}
If the $S$-module $M$ has a virtual resolution of length $\ell$, then $\Ext^i_S(M,N)^\sim = 0 = \Tor_i^S(M,N)^\sim$ for all $S$-modules $N$ and all $i>\ell$.
\end{Propo}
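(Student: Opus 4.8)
The plan is to observe that the corresponding complex of sheaves $\widetilde{F}_\bullet$ associated to a length-$\ell$ virtual resolution $F_\bullet$ of $M$ is, by definition, a locally free resolution of $\widetilde{M}$, so it computes the derived functors of $\Hom$ and $\otimes$ on the sheaf level. Concretely, for any $S$-module $N$, the sheafifications $\Ext^i_S(M,N)^\sim$ and $\Tor^S_i(M,N)^\sim$ are the sheaf $\sExt$ and $\mathcal{T}\!or$ sheaves $\sExt^i_{\cO_X}(\widetilde M,\widetilde N)$ and $\mathcal{T}\!or^{\cO_X}_i(\widetilde M,\widetilde N)$; this is precisely the content of Propositions~\ref{prop:vExt} and~\ref{prop:vTor}, which we may assume. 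Granting that identification, the statement reduces to the standard fact that a locally free resolution of length $\ell$ computes $\sExt^i$ and $\mathcal{T}\!or_i$ as the cohomology of a complex that is zero in homological degrees $>\ell$, whence those sheaves vanish for $i>\ell$.

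In more detail, I would proceed as follows. First, fix a virtual resolution $F_\bullet = [F_0 \gets F_1 \gets \dots \gets F_\ell \gets 0]$ of $M$. Applying $\Hom_S(-,N)$ yields a cochain complex $\Hom_S(F_\bullet, N)$ concentrated in cohomological degrees $0, 1, \dots, \ell$; sheafifying is exact, so $(\Hom_S(F_\bullet,N))^\sim = \cHom_{\cO_X}(\widetilde{F}_\bullet, \widetilde N)$ is also concentrated in degrees $0,\dots,\ell$. Since $\widetilde{F}_\bullet$ is a locally free (hence $\cHom(-,\widetilde N)$-acyclic) resolution of $\widetilde M$, its cohomology in degree $i$ computes $\sExt^i_{\cO_X}(\widetilde M, \widetilde N)$, which by Proposition~\ref{prop:vExt} equals $\Ext^i_S(M,N)^\sim$. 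A complex that is zero in cohomological degrees above $\ell$ has vanishing cohomology there, giving $\Ext^i_S(M,N)^\sim = 0$ for $i > \ell$. The argument for $\Tor$ is identical with $\Hom_S(-,N)$ replaced by $-\otimes_S N$ and $\cHom$ replaced by $\otimes_{\cO_X}$, using Proposition~\ref{prop:vTor} in place of Proposition~\ref{prop:vExt}; one must only note that a right-bounded complex of locally free sheaves tensored against $\widetilde N$ stays supported in the same homological degrees, so its homology vanishes above $\ell$.

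There is essentially no obstacle here: the only nontrivial input is the identification of sheafified $\Ext$ and $\Tor$ with their sheaf-theoretic counterparts computed via a virtual resolution, and that is exactly what Propositions~\ref{prop:vExt} and~\ref{prop:vTor} supply. Thus the proof is a short formal deduction, and I would present it as the two displayed corollaries (Corollaries~\ref{cor:vanishingExt} and~\ref{cor:vanishingTor}) immediately following those propositions, with the remark that it is the sheaf-level analogue of the classical fact that projective dimension bounds the vanishing range of $\Ext$ and $\Tor$.
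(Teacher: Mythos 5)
Your proof is correct and follows essentially the same route as the paper: the statement is exactly Corollaries~\ref{cor:vanishingExt} and~\ref{cor:vanishingTor}, which the paper deduces immediately from Propositions~\ref{prop:vExt} and~\ref{prop:vTor} because $\Hom_S(F_\bullet,N)$ and $F_\bullet\otimes_S N$ are concentrated in degrees $0,\dots,\ell$. Your extra detour through $\sExt^i_{\cO_X}(\widetilde M,\widetilde N)$ and the sheaf-level tensor is harmless but unnecessary, since those propositions already identify $\Ext^i_S(M,N)^\sim$ and $\Tor_i^S(M,N)^\sim$ with the sheafified homology of complexes that vanish above degree $\ell$.
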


We  show how this fact can be used to give an example of a module that cannot be virtually Cohen--Macaulay (see Example~\ref{ex:disjoint-lines-P4}).  We also show that the converses to Proposition~\ref{prop:vreg-elt-quotient} and Corollaries~\ref{cor:vanishingExt} and~\ref{cor:vanishingTor} are false.  In particular, we see that the virtual dimension of a module is bounded below by the homological dimension of the associated coherent sheaf. Further, the homological dimension may be strictly greater than the virtual dimension, as demonstrated in Example~\ref{ex:tangentbundle}.   

\section{Virtually Cohen--Macaulay Stanley--Reisner rings}
\label{sec:triangles}

The purpose of this section is to prove the following theorem. 

\begin{theorem}
\label{thm:monomial-vcm}
Let $S$ be the Cox ring of $X = \PP^{n_1} \times \PP^{n_2}\times \cdots \times \PP^{n_r}$. 
If $\Delta$ is an $r$-dimensional simplicial complex and its associated variety $V(I_\Delta)\subseteq X$ is equidimensional, 
then $S/I_{\Delta}$ is virtually Cohen--Macaulay. 
\end{theorem}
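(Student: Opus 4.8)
The plan is to build, for each equidimensional $r$-dimensional Stanley--Reisner ring $S/I_\Delta$ over the Cox ring of $X = \PP^{n_1}\times\cdots\times\PP^{n_r}$, an explicit virtual resolution whose length equals $\codim(S/I_\Delta)$. Since $\widetilde{S/I_\Delta}$ is supported on the equidimensional subscheme $V(I_\Delta)$ of dimension $r-1$, the expected target length is $(\sum n_i) - (r-1) = \dim X - \dim V(I_\Delta)$, i.e. $\codim(S/I_\Delta)$. First I would set up the combinatorial bookkeeping: the facets of $\Delta$ are $r$-element subsets of $\{1,\dots,n\}$, one vertex from each of the $r$ "groups" dictated by the multigrading (otherwise $V(I_\Delta)$ would not be equidimensional of the expected dimension inside $X$), so each facet corresponds to a coordinate subspace $\PP^0\times\cdots$ of the right dimension, and $V(I_\Delta)$ is a union of products of linear subspaces. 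The equidimensionality hypothesis is exactly what forces this uniform shape on the facets.

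The core of the argument is a Hochster-type computation adapted to the virtual setting: one computes the graded pieces of the local cohomology (equivalently the relevant $\Ext$ modules) of $S/I_\Delta$ via the reduced homology of links/restrictions of $\Delta$, but one only needs to control this \emph{up to sheafification}, i.e. away from the irrelevant locus. So the step is to show that in the relevant range of multidegrees, the simplicial (co)homology groups governing $\Ext^i_S(S/I_\Delta,S)$ vanish for $i$ exceeding $\codim(S/I_\Delta)$, after sheafifying. I would do this by decomposing the nerve complex of the cover of $V(I_\Delta)$ by its irreducible components (each a product of coordinate linear spaces) and running the associated Mayer--Vietoris / Čech-to-derived-functor spectral sequence; the equidimensionality makes every component — and every nonempty intersection of components — again a product of coordinate subspaces, each of which is arithmetically Cohen--Macaulay, so its local cohomology is concentrated in a single degree. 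Assembling these via the spectral sequence yields a complex of the correct length that sheafifies to a resolution of $\widetilde{S/I_\Delta}$.

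Concretely, I would proceed in these steps: (1) reduce to the case where $\Delta$ is pure of dimension $r-1$ with all facets transverse to the group partition, justifying this reduction from equidimensionality of $V(I_\Delta)$; (2) identify the irreducible components of $V(I_\Delta)$ and the nerve complex $N$ of this cover, observing each face of $N$ corresponds to an intersection that is a product of projective spaces and hence arithmetically Cohen--Macaulay with a short explicit (Koszul-type) free resolution; (3) form the total complex of the Čech-type double complex built from the free resolutions of the (structure sheaves of the) components indexed by $N$, and check via the spectral sequence — whose $E_1$ or $E_2$ page is computed from the reduced homology of $N$ and its subcomplexes — that the total complex is exact after sheafification except in degree $0$, where it computes $\widetilde{S/I_\Delta}$; (4) bound the length of this total complex by $\max$(length of each component's resolution) $+ \dim N$, and verify this equals $\codim(S/I_\Delta)$ using the equidimensionality. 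Finally, invoking $\vdim \geq \codim$ from \cite[Proposition~2.5]{virtual-original} gives equality and hence the virtually Cohen--Macaulay conclusion.

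The main obstacle I expect is step (3)–(4): controlling the spectral sequence precisely enough to guarantee both exactness-after-sheafification and the sharp length bound simultaneously. The components' resolutions have varying lengths a priori, and naive totalization can overshoot $\codim(S/I_\Delta)$; the equidimensionality hypothesis must be leveraged not just to make each piece arithmetically Cohen--Macaulay but to make the lengths align so that the nerve direction and the resolution direction trade off exactly. Handling the higher differentials (or showing degeneration) of the nerve spectral sequence in the sheafified category, and ruling out spurious homology coming from the irrelevant ideal, is where the careful application of Hochster's formula "interpreted in a virtual setting" does the real work.
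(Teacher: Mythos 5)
Your plan stalls exactly where you predict, and the obstacle is not a technicality that equidimensionality repairs. In your step (3)--(4), the deeper intersections of the components of $V(I_\Delta)$ are coordinate subvarieties of progressively higher codimension (up to $\dim X$), and in the nerve/\v{C}ech double complex they also sit progressively further along the nerve direction, so the Koszul length and the nerve degree \emph{add} rather than trade off: already for the cylinder in $\PP^2\times\PP^2$ of Example~\ref{ex:monomial-nonCM-components}, the components are lines of codimension $3$ whose pairwise intersections are points of codimension $4$, so the naive totalization has length well above $\codim S/I_\Delta = 3$, and no mechanism is given to collapse it. Nor can the ``Hochster up to sheafification'' computation rescue this: vanishing of $\Ext_S^i(S/I_\Delta,S)^\sim$ for $i>\codim$ is only a necessary condition (Corollary~\ref{cor:vanishingExt}), and Example~\ref{ex:tangentbundle} shows that the virtual dimension can strictly exceed the homological dimension of the sheaf, so proving sheafified $\Ext$ or local cohomology vanishing does not produce the required short virtual resolution. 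Finally, exactness (even after sheafification) of the component-wise Mayer--Vietoris complex at points where three or more components meet is not automatic and is not argued. There is also a setup error: an $r$-dimensional $\Delta$ has facets with $r+1$ vertices, so under equidimensionality every facet uses all $r$ colors with exactly one color repeated, and $V(I_\Delta)$ is a curve (a union of coordinate $\PP^1$'s times points), not the collection of points suggested by your ``pure of dimension $r-1$, one vertex from each group'' reduction.

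For comparison, the paper takes a different route that avoids totalization entirely: after reducing to relevant-connected components (Proposition~\ref{prop:disjoint-support}), it modifies $\Delta$ only in irrelevant faces --- either via a join/induction argument or by passing to $\Delta'=\Delta\cup\cB_r$ --- and proves that $\Delta'$ is honestly Cohen--Macaulay by Reisner's criterion, using Lemma~\ref{lem:irrelevant-almost-cm} (where the nerve spectral sequence appears, but only for the fixed complex $\cB_r$), the interior/exterior face analysis of Lemma~\ref{lem:twoface}, and a relative-homology and graph argument (Theorem~\ref{thm:relevant-connected-ideal-VCM}). Since $I_\Delta$ and $I_{\Delta'}$ have the same sheafification, the minimal free resolution of $S/I_{\Delta'}$, of length $\codim S/I_\Delta$ by Auslander--Buchsbaum, is the desired virtual resolution. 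If you want to salvage your approach, you would need a genuinely new idea for making the nerve direction and the resolution direction cancel; as written, the construction overshoots and the gap is essential.
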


With the dimension constraints in Theorem~\ref{thm:monomial-vcm}, the variety $V(I_\Delta)\subseteq X$ must satisfy $\dim (V(I_\Delta))= 1$, and so this is a theorem about one dimensional subvarieties of $X$ determined by monomial ideals. As such, it is natural to ask whether such a statement holds in a more general setting, for example by taking $X$ to be any smooth projective toric variety. The techniques outlined in this section rely heavily on the structure of the Cox ring of a product of projective spaces. In particular, we rely on our ability to separate the variables into groups based on the product structure of $X$.  Thus, though the statement may hold with fewer hypotheses on $X$, the proof would have to be meaningfully different than that given here.

Let $S = k[x_{i,j}\mid {1\le i\le r, 0\le j\le n_i}]$ be the Cox ring of $X$ (of Theorem~\ref{thm:monomial-vcm}) and $B$ the irrelevant ideal of $S$.  Throughout this section, we will consider simplicial complexes on the vertex set $\cX$ corresponding to the variables $(x_{i,j})_{1\le i\le r, 0\le j\le n_i}$ of $S$.  The vertices in $\cX$ corresponding to $x_{i,\bullet}$ are said to have color $i$.  Let $\Delta$ be a simplicial complex with vertices in $\cX$. 
Define the \emph{color set of a face} $\sigma\in\Delta$ to be the set of the colors of the vertices of $\sigma$, denoted by $\colo(\sigma)$.
We say that a face $\sigma\in\Delta$ is \emph{relevant} if $\colo(\sigma)=[r] = \{1,2,\dots,r\}$ and \emph{irrelevant} otherwise. 
A simplicial complex $\Delta$ is \emph{relevant} if it contains at least one relevant face, and it is \emph{irrelevant} otherwise. 
Note that if $\Delta$ is an irrelevant simplicial complex on $\cX$, then $S/I_{\Delta}$ is irrelevant, i.e., the support of $S/I_{\Delta}$ is contained in $V(B) = \{P \in \Spec(S) \mid B \subseteq P\}$. If $\Delta$ is a relevant simplicial complex on $\cX$, then $\Delta$ is said to be \emph{virtually Cohen--Macaulay} if  $S/I_\Delta$ is virtually Cohen--Macaulay. 

Our proof of Theorem~\ref{thm:monomial-vcm} begins with a lemma treating irrelevant faces of a fixed dimension.  We aim to understand reduced simplicial homology of complexes associated with Stanley--Reisner rings in order to apply Reisner's criterion, which we will use to detect the virtually Cohen--Macaulay property.   We will need to recall two pieces of standard terminology and introduce one new piece of notation.  Recall that if $\sigma$ is a face of the simplicial complex $\Delta$, then we define the \emph{link of $\sigma$ in $\Delta$} to be 
\[ 
 \link_{\sigma}(\Delta) = \{\sigma' \in \Delta \mid \sigma \cup \sigma' \in \Delta, \sigma \cap \sigma' = \varnothing \}.
\] 
Recall also that $\widetilde{H}_{i}(\Delta;k)$ denotes the $i^{th}$ reduced simplicial homology of the simplicial complex $\Delta$ with coefficients in $k$.  Finally, let $\cB_r = 
\left\{\sigma \subseteq\cX \mid \dim \sigma \le r, \sigma \text{ is irrelevant}\right\}$, 
the simplicial complex of all at most $r$-dimensional irrelevant simplices.

\begin{lemma}
\label{lem:irrelevant-almost-cm}
The ring 
$S/I_{\cB_r}$ is Cohen--Macaulay on the punctured spectrum. 
Also, $\widetilde{H}_{r-2}(\cB_r;k)=k$ and $\widetilde{H}_{i}(\cB_r;k)=0$ for $i<r$ with $i\neq r-2$. 
\end{lemma}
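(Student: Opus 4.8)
\emph{Overview and setup.}
The plan is to realize $\cB_r$ as the $r$-skeleton of an explicit union of coordinate simplices, read off its homology from the Nerve Lemma, and then obtain the Cohen--Macaulay statement by feeding the (easy) Cohen--Macaulayness of the pieces of that cover into the Mayer--Vietoris spectral sequence of the associated nerve. For each $i\in[r]$ let $\Sigma_i$ be the full simplex on the vertex set $\cX\setminus V_i$, i.e.\ the subcomplex of faces missing color $i$; since a face is irrelevant exactly when it misses some color, $\mathcal{I}:=\Sigma_1\cup\dots\cup\Sigma_r$ is the complex of all irrelevant faces and $\cB_r$ is its $r$-skeleton. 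A face of dimension $\le r$ that lies in a union of subcomplexes already lies in one of them, so the $r$-skeleton commutes with these unions and intersections: writing $\Sigma_i^{(r)}$ for the $r$-skeleton of $\Sigma_i$, we have $\cB_r=\bigcup_i\Sigma_i^{(r)}$, and for $J\subseteq[r]$ the intersection $\bigcap_{i\in J}\Sigma_i^{(r)}$ is the $r$-skeleton of the full simplex on $\cX\setminus\bigcup_{i\in J}V_i$, which has empty realization precisely when $J=[r]$.

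\emph{Homology.}
Every intersection $\bigcap_{i\in J}\Sigma_i$ with $\varnothing\neq J\subsetneq[r]$ is a nonempty simplex, hence contractible, so the Nerve Lemma applies to the cover $\{\Sigma_i\}$ of $\mathcal{I}$: the complex $\mathcal{I}$ is homotopy equivalent to the nerve, whose faces are exactly the proper subsets of $[r]$, i.e.\ $\partial\Delta^{r-1}\cong S^{r-2}$. Passing to the $m$-skeleton of a complex does not change $\widetilde H_i$ in degrees $i<m$, and $r-2<r$, so $\widetilde H_{r-2}(\cB_r;k)\cong\widetilde H_{r-2}(\mathcal{I};k)=k$ and $\widetilde H_i(\cB_r;k)\cong\widetilde H_i(\mathcal{I};k)=0$ for the remaining $i<r$. (For $r\le2$ one reads $S^{r-2}$ as $S^0$, respectively the empty complex, and checks the two small cases by hand.)

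\emph{Cohen--Macaulayness on the punctured spectrum, and the main obstacle.}
The $r$-skeleton of a simplex is Cohen--Macaulay (it is shellable), so in the cover $\cB_r=\bigcup_i\Sigma_i^{(r)}$ every finite intersection $\Sigma_J^{(r)}$ is a Cohen--Macaulay ring, with local cohomology concentrated in the single degree $\min\!\left(r+1,\ \lvert\cX\setminus\bigcup_{i\in J}V_i\rvert\right)$. Feeding this into the Mayer--Vietoris (\v{C}ech) spectral sequence of the cover---the spectral sequence attached to the nerve $S^{r-2}$---computes the local cohomology of $S/I_{\cB_r}$ from these top local cohomologies, and one reads off that, below the dimension of $S/I_{\cB_r}$, the only surviving contribution is the finite-length class picked out by $\widetilde H^{r-2}(\cB_r)$; equivalently, $S/I_{\cB_r}$ is Cohen--Macaulay away from the irrelevant locus. (One can instead invoke Reisner's criterion: the link in $\cB_r$ of a nonempty face is again the skeleton of a union of coordinate simplices of exactly the above shape, so its homology is governed by the same nerve computation.) I expect the real work to lie in the degree bookkeeping at this step: the pieces $\Sigma_J^{(r)}$ are not equidimensional---their dimension jumps once $\sum_{i\in J}(n_i+1)$ exceeds $r+1$---so one must track, summand by summand over the faces of the nerve, in which cohomological degree each $H^\bullet_{\mathfrak m}(\Sigma_J^{(r)})$ lives and verify that after the skeleton truncation these degrees interleave so that nothing of infinite length persists below $\dim S/I_{\cB_r}$; the degenerate faces (those containing an entire color class $V_i$, whose link collapses to a very low-dimensional complex, and those meeting the truncation bound) must be examined separately, and this is precisely where Hochster's formula, read in the virtual setting, does the work.
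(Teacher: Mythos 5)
Your proof of the \emph{homology} assertion is complete and correct, and it takes a genuinely cleaner route than the paper: you apply the Nerve Lemma to the untruncated union of the coordinate simplices $\Sigma_i$ (all proper intersections being nonempty simplices, the union is homotopy equivalent to $\partial\Delta^{r-1}\simeq S^{r-2}$) and then use the fact that passing to the $r$-skeleton does not change $\widetilde{H}_i$ for $i<r$. The paper obtains the same sphere by running the Mayer--Vietoris spectral sequence of a cover of $\cB_r$ itself and identifying the bottom row of $E^1$ with the chain complex of the nerve; your skeleton-truncation trick sidesteps that spectral sequence for this half of the statement.

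The genuine gap is the first assertion, Cohen--Macaulayness on the punctured spectrum, which your proposal does not prove. What is needed (and what the paper actually establishes) is the Reisner-type vanishing $\widetilde{H}_i(\link_{\sigma}(\cB_r);k)=0$ for every \emph{nonempty} face $\sigma$ and all $i<\dim\link_{\sigma}(\cB_r)=r-1-\dim(\sigma)$. Your parenthetical claim that the links are ``governed by the same nerve computation'' is not a proof and, as stated, misses the crux: the point of the paper's argument is precisely that the nerve \emph{changes} when $\sigma\neq\varnothing$ --- it becomes (in the paper's identification) the full simplex on $[r]\setminus\mathcolor(\sigma)$, hence contractible, rather than the boundary sphere one sees at $\sigma=\varnothing$; this dichotomy is what produces vanishing all the way up to degree $\dim\link_\sigma(\cB_r)-1$ for nonempty $\sigma$ while leaving the single class in degree $r-2$ globally. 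Moreover the Nerve Lemma cannot simply be invoked for the link: $\link_{\sigma}(\cB_r)$ is a skeleton of a union whose covering pieces and intersections are themselves skeleta of simplices, which are not contractible, so the paper instead runs the Mayer--Vietoris spectral sequence of the cover $\{\Delta_{\{i\}}\}_{i\notin\mathcolor(\sigma)}$, using that each $\Delta_C$ has reduced homology only in the top degree $r-1-\dim(\sigma)$, so that below that degree the $E^1$ page collapses to the nerve's chain complex. None of this appears in your write-up: your primary route (local cohomology of $S/I_{\cB_r}$ via a \v{C}ech/Mayer--Vietoris sequence of the cover by skeleta of simplices) is explicitly left as ``degree bookkeeping'' to be done, with Hochster's formula invoked as a hope rather than used, and your secondary route (Reisner) stops exactly where the work begins. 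Your instinct that faces containing an entire color class are the delicate case is well placed --- that is exactly where the nonemptiness of the intersections, and hence the identification of the nerve, must be checked --- but flagging the difficulty is not resolving it, so the Cohen--Macaulay half of the lemma remains unestablished in your proposal.
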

\begin{proof}
We will show that for all $\sigma\in\cB_r\setminus\varnothing$, 
\[
\widetilde{H}_i(\link_\sigma(\cB_r);k)=0 
\quad \text{ for }
i<\dim(\link_\sigma(\cB_r)) = r-1-\dim(\sigma),
\] 
$\widetilde{H}_{r-2}(\cB_r;k)=k$, 
and $\widetilde{H}_{i}(\cB_r;k)=0$ for $i\neq r-2$ and $i<r$. 

Let $\sigma\in \cB_r$ be arbitrary. Let $\Delta = \link_{\sigma}(\cB_r)$. Now for $C\subset [r]$ with $C^{c}\cup \mathcolor(\sigma)\neq [r]$, consider the subcomplex $\Delta_C$ given by the faces of $\Delta$ that do not include the colors in $C$. Note in particular that $\Delta_C\cap \Delta_D =\Delta_{C\cup D}$.

For every face $\gamma\in \Delta$, since $\gamma\cup\sigma\in\cB_r$, it is irrelevant. Thus, there exists an $i$ such that $i\notin \mathcolor(\gamma\cup \sigma)$. In particular, $i$ satisfies both $i\notin\mathcolor(\sigma)$ and $\gamma\in \Delta_{\left\{i\right\}}$. 
Putting these together, $\left\{\Delta_{\left\{i\right\}}\right\}_{i\notin \mathcolor(\sigma)}$ provides a covering of $\Delta$, 
which induces the Mayer--Vietoris spectral sequence:  
\[
E_{p,q}^1 = 
\bigoplus_{\substack{|C|=p+1>0\\C^{c}\cup \mathcolor(\sigma)\neq [r]}} 
H_q(\Delta_C;k)
\quad \Rightarrow \quad 
H_{p+q}(\Delta;k).
\]

We claim that $\Delta_C$ is the $\left(r-1-\dim(\sigma)\right)$-skeleton of the simplex on all vertices with color in $C^{c}$, excluding those vertices in $\sigma$. 
To see this, recall that we are restricting to $C$ with $C^{c}\cup \mathcolor(\sigma)\neq [r]$. Thus for every simplicial complex $\gamma$ on $\cX$, 
with 
$\dim\gamma\le \dim(\link_{\sigma}(\cB_r))=r-1-\dim(\sigma)$ and 
$\mathcolor(\gamma)\subset C^{c}$, 
it must be that $\gamma\cup \sigma$ is irrelevant and belongs to $\cB_r$, so $\gamma\in\Delta_C$.  

Now for $\sigma\neq \varnothing$, we must show that
$\widetilde{H}_i(\Delta;k)=0$ for 
$i<\dim(\Delta) = r-1-\dim(\sigma)$ for 
$\sigma\neq\varnothing$. 
Since $\Delta_C$ is the $(r-1-\dim(\sigma))$-skeleton of a simplex, it cannot have reduced homology in degrees lower than $r-1-\dim(\sigma)$, and therefore 
$H_{q}(\Delta_C;k)=0$ for 
$0 < q < r-1-\dim(\sigma)$.
Thus for $p+q<r-1-\dim(\sigma)$ with $q\neq 0$, we have that $E_{p,q}^1=0$, as can be seen in the $E^1$ page in Figure~\ref{fig:spectral-sequence}. 
In light of this, it suffices to show that the maps on $E_{p+1,0}^1\rightarrow E_{p,0}^1$ give $0$ homology. 
But this can be observed in the total complex, since $H_0(\Delta_C;k)=k$ for $C\neq [r]$, and $\Delta_C= \varnothing$ if and only if $C=[r]$, the complex given by these maps is simply the simplicial chain complex for the nerve of the covering of $\Delta$ by $\left\{\Delta_{\left\{i\right\}}\right\}_{i\notin \mathcolor(\sigma)},$ where the nerve is the simplicial complex given by 
\[
N\left(\left\{\Delta_{\left\{i\right\}}\right\}_{i\notin\mathcolor(\sigma)}\right)=\left\{F\subset [r]\setminus \mathcolor(\sigma) \ \bigg\vert \ \bigcap_{i\in F} \Delta_{\left\{i\right\}}\neq \varnothing\right\}.
\]
In this case, this nerve is the simplex on $[r]\setminus\mathcolor(\sigma)$. Thus $E_{0,0}^2=k$ and $E_{p,q}^2=0$ for $0<p+q<r-1-\dim(\sigma)$. And therefore 
$\widetilde{H}_{i}(\Delta;k)=0$ for $i<\dim(\Delta)$. 

If instead $\sigma=\varnothing$, 
then the nerve is the boundary of the simplex on $[r]$, which is a sphere.
In light of this, the result holds by direct computation if $r\le 2$. If $r>2$, then 
$E_{0,0}^2=k$ and $E_{p,q}^2=0$ for $0<p+q<r$ except $(p,q)=(r-2,0)$ and $E_{r-2,0}^2=k$. 
Thus $\widetilde{H}_{r-2}(\Delta;k)=k$, and 
$\widetilde{H}_{i}(\Delta;k)=0$ for $i<r$ with $i\neq r-2$.

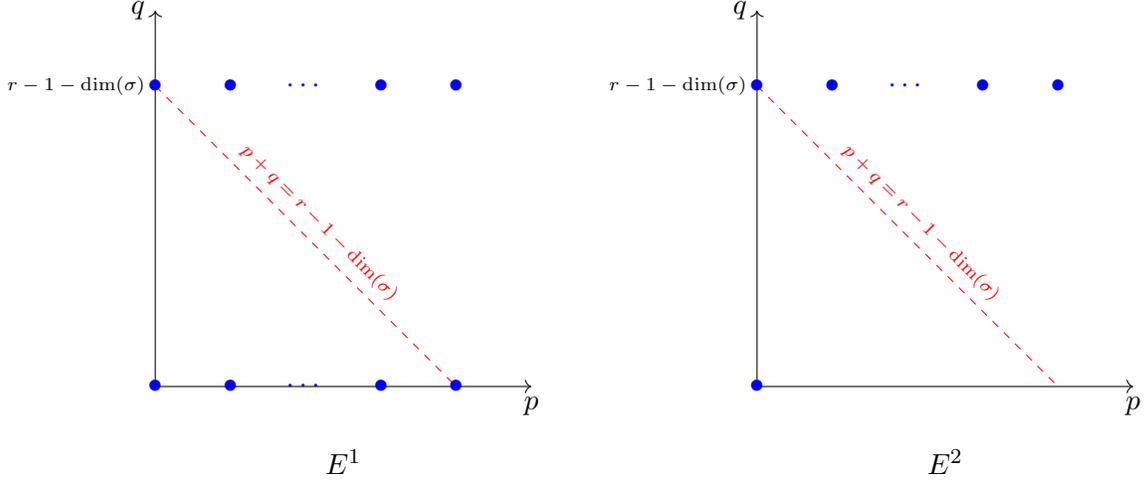
\begin{figure}
  \caption{\label{fig:spectral-sequence} An illustration of the $E^1$ page and the resulting vanishing on the $E^2$ page in the case of $\sigma\neq\varnothing$. The dots represent the potentially non-zero entries on any particular page. The vanishing of the entries on the bottom row of the $E^2$ page are due to the entries on the bottom row of the $E^1$ page forming a chain complex isomorphic to the nerve complex of the covering.}
  \begin{tikzpicture}
    \begin{scope}
    \draw[->] (0,0) -- (5,0) node [below] {$p$};
    \draw[->] (0,0) -- (0,5) node [left] {$q$};
    \draw[dashed,red] (0,4)--(4,0) node [midway,above,sloped] {\tiny$p+q=r-1-\dim(\sigma)$};
    \node [left,black] at (0,4) {\tiny $r-1-\dim(\sigma)$} ;
    \node[blue] at (0,0) {$\bullet$};
    \node[blue] at (1,0) {$\bullet$};
    \node[blue] at (2,0) {$\cdots$};
    \node[blue] at (3,0) {$\bullet$};
    \node[blue] at (4,0) {$\bullet$};

    \node[blue] at (0,4) {$\bullet$};
    \node[blue] at (1,4) {$\bullet$};
    \node[blue] at (2,4) {$\cdots$};
    \node[blue] at (3,4) {$\bullet$};
    \node[blue] at (4,4) {$\bullet$};

    \node at (2.5,-1) {$E^1$};
    
    \end{scope}

    \begin{scope}[shift={(8,0)}]
    \draw[->] (0,0) -- (5,0) node [below] {$p$};
    \draw[->] (0,0) -- (0,5) node [left] {$q$};
    \node [left,black] at (0,4) {\tiny $r-1-\dim(\sigma)$} ;
    \draw[dashed,red] (0,4)--(4,0) node [midway,above,sloped] {\tiny$p+q=r-1-\dim(\sigma)$};
    \node[blue] at (0,0) {$\bullet$};

    \node[blue] at (0,4) {$\bullet$};
    \node[blue] at (1,4) {$\bullet$};
    \node[blue] at (2,4) {$\cdots$};
    \node[blue] at (3,4) {$\bullet$};
    \node[blue] at (4,4) {$\bullet$};
    \node at (2.5,-1) {$E^2$};
    \end{scope}
  \end{tikzpicture}
\end{figure}

\end{proof}

In Lemma~\ref{lem:irrelevant-almost-cm}, the $r$ used need not be the same as $r$ in $X=\PP^{n_1}\times\cdots \times \PP^{n_r}$; however, in practice we will only need the case where the two $r$'s agree.

Our next goal is to use Lemma~\ref{lem:irrelevant-almost-cm} to prove Theorem~\ref{thm:relevant-connected-ideal-VCM}, stated below, which is a special case of Theorem~\ref{thm:monomial-vcm}, a case to which we will ultimately reduce the main theorem.  A relevant simplicial complex $\Delta$ is \emph{relevant-connected} if its geometric realization is (topologically) connected after removing the realization of any of its irrelevant faces. Further, a subcomplex of $\Delta$ is called a \emph{relevant-connected component} if it is maximal among the relevant-connected subcomplexes of $\Delta$.

\begin{theorem}
\label{thm:relevant-connected-ideal-VCM}
If $\Delta$ is an $r$-dimensional relevant-connected  simplicial complex on $\cX$, then $S/I_{\Delta}$ is virtually Cohen--Macaulay. 
\end{theorem}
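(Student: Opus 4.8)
The plan is to produce, constructively, a virtual resolution of $S/I_\Delta$ of length $\dim X - 1$: since $\Delta$ is $r$-dimensional we have $\dim(S/I_\Delta) = r+1$, so $\codim(S/I_\Delta) = \dim X - 1$, and by \cite[Proposition~2.5]{virtual-original} such a resolution is automatically of minimal length and witnesses that $S/I_\Delta$ is virtually Cohen--Macaulay. First I would make two reductions. Because the irrelevant facets of $\Delta$ contribute only components lying inside $V(B)$, the sheaf $\widetilde{S/I_\Delta}$ depends only on the subcomplex generated by the relevant facets of $\Delta$; relevant-connectedness together with $r$-dimensionality forces this subcomplex again to be relevant-connected, $r$-dimensional, and to have all facets relevant of dimension $r$, so we may assume this from the start. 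In particular $V(I_\Delta)$ is then a connected reduced union of coordinate lines, which is automatically a Cohen--Macaulay scheme; thus $\widetilde{S/I_\Delta}$ is already a Cohen--Macaulay sheaf, and $\Ext^i_S(S/I_\Delta,S)^\sim = 0$ for all $i > \dim X - 1$. So the entire difficulty is the \emph{length} of a virtual resolution, not geometric Cohen--Macaulayness.

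The construction would proceed by induction on the number of facets of $\Delta$, peeling off one line at a time. The base case is a single relevant $r$-facet $F$, where $S/I_\Delta = S/P_F$ is a polynomial ring and the Koszul complex on the $\dim X - 1$ variables not in $F$ is the desired resolution. For the inductive step, choose a facet $F$ such that $\Delta' := \Delta \setminus \{F\}$ is again relevant-connected and $r$-dimensional; the existence of such a ``leaf'' facet is a combinatorial lemma that I expect to follow from relevant-connectedness (a facet whose removal disconnects $\Delta$ after also deleting some irrelevant face would force a forbidden cut-face, since deleting an irrelevant face deletes the whole closed simplex it spans). From $I_\Delta = I_{\Delta'} \cap P_F$ and $I_{\Delta'} + P_F = I_{\Delta' \cap \overline{F}}$ one obtains the short exact sequence
\[
0 \longrightarrow S/I_\Delta \longrightarrow S/I_{\Delta'} \oplus S/P_F \longrightarrow S/I_{\Delta' \cap \overline{F}} \longrightarrow 0 .
\]
Here $S/P_F$ has a Koszul resolution of length $\dim X - 1$, $S/I_{\Delta'}$ has a virtual resolution of length $\dim X - 1$ by induction, and $S/I_{\Delta' \cap \overline{F}}$ --- a quotient by a (necessarily relevant, by connectedness) monomial ideal --- has a virtual resolution of length at most $\dim X$ by \cite{yang-monomial}. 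Taking the mapping cone of the surjection, as in Proposition~\ref{prop:mapping-cone}, yields a virtual resolution of $S/I_\Delta$ of length $\max\{\dim X - 1,\ \dim X - 1\} = \dim X - 1$, the homological degree shift being exactly what makes the weaker bound on the intersection term suffice.

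Two things must be verified to make this work. First, the combinatorial peeling lemma above. Second --- and this is where Lemma~\ref{lem:irrelevant-almost-cm} and its nerve / Mayer--Vietoris spectral sequence are needed --- one must control the intersection complex $\Delta' \cap \overline{F}$ well enough that the mapping cone sheafifies to a genuine locally free resolution of $\widetilde{S/I_\Delta}$ and does not accidentally lengthen: the links of the faces of $\Delta' \cap \overline{F}$ sit inside the irrelevant complexes $\cB_s$, so their reduced homology is concentrated in top degree away from $V(B)$ by Lemma~\ref{lem:irrelevant-almost-cm}, which both bounds the length of the auxiliary virtual resolution and ensures that splicing it in introduces no homology on $X$. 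This is the step I expect to be the main obstacle: relevant-connectedness has to cancel, across the links of all faces simultaneously, the intrinsic homological defects that the complexes $\cB_s$ carry on their own (the nonvanishing degree-$(s-2)$ homology identified in Lemma~\ref{lem:irrelevant-almost-cm}), and tracking this through the interaction with the Koszul and inductive resolutions in the mapping cone is delicate. It is precisely here that the product-of-projective-spaces structure --- separating the variables by color and deleting one color at a time --- is indispensable, which is why the argument does not obviously extend to an arbitrary smooth projective toric variety.
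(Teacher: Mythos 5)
Your reduction to the case where all facets are relevant, the choice of a ``leaf'' facet, and the Mayer--Vietoris sequence $0 \to S/I_\Delta \to S/I_{\Delta'} \oplus S/P_F \to S/I_{\Delta' \cap \overline{F}} \to 0$ are all reasonable, and you are right that geometric Cohen--Macaulayness is not the issue. The genuine gap is the step where you pass from a length-$(\dim X-1)$ virtual resolution of $S/I_{\Delta'} \oplus S/P_F$ and a length-$\le \dim X$ virtual resolution of $S/I_{\Delta'\cap\overline F}$ to a length-$(\dim X-1)$ virtual resolution of $S/I_\Delta$. Proposition~\ref{prop:mapping-cone} does not do this: it is the dual-$\Ext$ shortening device (resolve $\Ext^t(M,S)$, compare with $F_\bullet^*$, take a cone), with hypotheses $\Ext^t(M,S)^\sim=0$ and $G_{-2}=0$; it is not a horseshoe-type statement for a surjection onto a third module. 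What you actually need is a ``virtual syzygy inequality'' $\vdim M \le \max\{\vdim N, \vdim C - 1\}$ for $0\to M\to N\to C\to 0$, in constructive form, and this faces two obstructions that your sketch does not address. First, the comparison map: the virtual resolutions coming from induction and from \cite{yang-monomial} resolve modules that agree with $S/I_{\Delta'}\oplus S/P_F$ and $S/I_{\Delta'\cap\overline F}$ only after sheafification, so the surjection need not lift to a chain map between them (locally free sheaves are not projective objects, and the $H_0$'s differ by saturation). Second, even granting a chain map, the cone has its interesting homology in degree $1$, and the truncate-and-shift needed to bring it to degree $0$ without adding length produces, at the sheaf level, a resolution of an extension $0\to \widetilde M\to \mathcal{E}\to \widetilde{C_0}\to 0$ with $\widetilde{C_0}$ locally free; in the affine/free setting this splits by projectivity, but here the obstruction lives in $H^1(X,\widetilde M\otimes \widetilde{C_0}^{\vee})$ and need not vanish, and splitting off a free summand from a virtual resolution is itself not a formal operation. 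Note that you cannot retreat to the sheaf-level syzygy inequality either, since $\vdim$ is not controlled by the homological dimension of $\widetilde M$ (Example~\ref{ex:tangentbundle}). Your closing appeal to Lemma~\ref{lem:irrelevant-almost-cm} to ``control'' $\Delta'\cap\overline F$ names the right worry but supplies no mechanism, and your peeling lemma, while plausible via the dual graph of lines, is only sketched.

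For comparison, the paper takes an entirely different route that sidesteps all of this homological algebra: it modifies $\Delta$ only in irrelevant faces to obtain a complex $\Delta'$ (either by a join/cone reduction when $\Delta = \join(\tau,\Omega)$, or by setting $\Delta' = \Delta\cup\cB_r$) and proves that $\Delta'$ is honestly Cohen--Macaulay via Reisner's criterion, using Lemma~\ref{lem:irrelevant-almost-cm}, the interior/exterior face analysis of Lemma~\ref{lem:twoface}, a graph-connectivity argument for the top relative homology, and the projection to the color simplex. The minimal free resolution of $S/I_{\Delta'}$, of length $\codim$ by Auslander--Buchsbaum, is then itself the desired virtual resolution of $S/I_\Delta$. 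If you want to salvage your inductive scheme, you would first have to prove the virtual horseshoe/truncation lemma described above, which is not in the paper and appears to be a substantial open-ended problem in its own right.
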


 The case $r=1$ is that of a single projective space.  In this case, the complex $\Delta$ is pure of dimension $1$ and relevant-connected, and so $\Delta$ is Cohen--Macaulay by Reisner's criterion.

In order to prove Theorem~\ref{thm:relevant-connected-ideal-VCM}, we will need to introduce and study interior and exterior faces, which we do now.  Let $\Delta$ be a relevant simplicial complex and $\sigma \neq \varnothing$ a face of $\Delta$. 
Let $\mbox{Ex}(\sigma, \Delta) = \link_{\sigma}(\Delta)\cap  \link_{\sigma}(\cB_r)$, and call the faces in $\mbox{Ex}(\sigma, \Delta)$ the \emph{exterior} faces of $\link_{\sigma}(\Delta)$. Call the rest of the faces in $\link_{\sigma}(\Delta)$ the \emph{interior} faces of $\link_{\sigma}(\Delta)$.

\begin{remark}
\label{rmk:intersect-ext-int}
Note that the intersection of an exterior and an interior face is an exterior face. 
\end{remark}

\begin{example}
\label{ex:interior-exterior}
Consider the following example in $\PP^3\times \PP^3$, where in Figure~\ref{fig:PP3xPP3}
the first copy of $\PP^3$ is colored red and the second copy of $\PP^3$ is colored blue. 
Consider the link of the red square vertex, whose faces consist of the red triangle vertices, blue pentagon vertices and dashed lines. The exterior faces are the red triangle vertices and the interior faces are the blue pentagon vertices and the dashed lines. 
Notice that the blue hexagons and dashed line on the right hand edge of the diagram are irrelevant, but are still interior faces. 
\begin{figure}[h]
\centering
\caption{Each column of vertices corresponds to a copy of $\PP^3$.}
\label{fig:PP3xPP3}
  \begin{tikzpicture}
    \fill[gray] (1,-0.5)--(0,0)--(1,0.5) -- cycle;
    \fill[gray] (1,0.5)--(0,1)--(1,1.5) -- cycle;
    \fill[gray] (1,1.5)--(0,2)--(1,2.5) -- cycle;
    \fill[gray] (0,0)--(1,0.5)--(0,1) -- cycle;
    \fill[gray] (0,1)--(1,1.5)--(0,2) -- cycle;
    \fill[gray] (0,2)--(1,2.5)--(0,3) -- cycle;
    \draw (0,0)--(0,1)--(0,2)--(0,3);
    \draw (1,-0.5)--(1,0.5);
    \draw (1,1.5)--(1,2.5);
    \draw (1,-0.5)--(0,0);
    \draw (1,0.5)--(0,1)--(1,1.5);
    \draw (0,2)--(1,2.5)--(0,3);
    \draw[dashed] (0,0)--(1,0.5)--(1,1.5)--(0,2);
    
    \node[red,fill,regular polygon, regular polygon sides=3,scale=0.3] at (0,0) {$\bullet$};
    \node[red,fill, rectangle, scale=0.9] at (0,1) {$$};
    \node[red, fill,regular polygon, regular polygon sides=3,scale=0.3] at (0,2) {$\bullet$};
    \node[red] at (0,3) {$\bullet$};
    \node[blue] at (1,-0.5) {$\bullet$};
    \node[blue,fill,regular polygon, regular polygon sides=5,scale=0.7] at (1,0.5) {$$};
    \node[blue,fill,regular polygon, regular polygon sides=5,scale=0.7] at (1,1.5) {$$};
    \node[blue] at (1,2.5) {$\bullet$};
  \end{tikzpicture}
\end{figure}
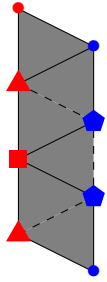

The idea of interior and exterior faces can become considerably more complex. Consider the following illustration of the link of a cell in an example $\Delta$ on $\PP^n\times \PP^m\times \PP^{\ell}$. 
In Figure~\ref{fig:octagon1}, the vertices corresponding to each of the parts of the product are colored red, blue, and green. Only the link is illustrated, and it is the link of a vertex that would be colored blue.  Therefore the bold faces are the exterior faces, and the others are interior faces.
\begin{figure}[h!]
\centering
\caption{The link in some $\Delta$ of a certain blue vertex on $\PP^n\times \PP^m\times \PP^{\ell}$.}
\label{fig:octagon1}
    \begin{tikzpicture}[scale=1.5]
    \coordinate (a) at (0,0);
    \node[blue] at (a) {$\bullet$};
    \coordinate (b) at (1,-1);
    \node[red] at (b) {$\bullet$};
    \coordinate (c) at (2,-1);
    \node[red] at (c) {$\bullet$};
    \coordinate (d) at (3,0);
    \node[red] at (d) {$\bullet$};
    \coordinate (e) at (3,1);
    \node[blue] at (e) {$\bullet$};
    \coordinate (f) at (2,2);
    \node[green] at (f) {$\bullet$};
    \coordinate (g) at (1,2);
    \node[green] at (g) {$\bullet$};
    \coordinate (h) at (0,1);
    \node[green] at (h) {$\bullet$};

    \coordinate (x1) at (1,1);
    \node[red] at (x1) {$\bullet$};
    \coordinate (x2) at (1,0);
    \node[green] at (x2) {$\bullet$};
    \coordinate (x3) at (1.5,0);
    \node[red] at (x3) {$\bullet$};
    \coordinate (x4) at (2,1);
    \node[green] at (x4) {$\bullet$};

    \coordinate (x5) at (2,1.5);
    \node[red] at (x5) {$\bullet$};

    \coordinate (x6) at (1.5,1.25);
    \node[green] at (x6) {$\bullet$};

    \draw[ultra thick] (a) -- (b) -- (c) -- (d) -- (e) -- (f) -- (g) -- (h) -- cycle;

    \draw (h)--(b);
    \draw (x1)--(b);
    \draw (d)--(f);
    
    \draw (h) -- (x2) -- (x1) -- (x4) -- (x3) -- (x2) -- (c);
    \draw (c) -- (x3);
    \draw (c) -- (x4) -- (d);
    \draw (x4) -- (x5) -- (f) -- (x1);
    \draw (x1) -- (x6) -- (x5) -- (d);
    \draw (x1) -- (h);
    \draw (x1) -- (g);
    \draw (x1)--(x3);
    \draw (f)--(x6)--(x4);

    \draw[ultra thick] (c)--(x3)--(x1);
    \draw[ultra thick] (h) -- (x2);
    \draw[ultra thick] (f) -- (x6) -- (x4);
    \draw[ultra thick] (d) -- (x5);
    
    \end{tikzpicture}
\end{figure}
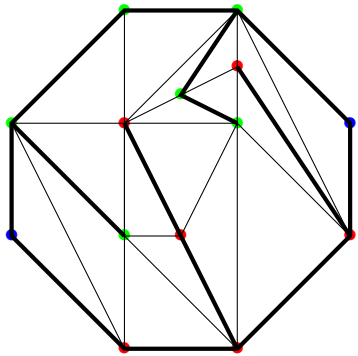
\end{example}

\begin{lemma}
\label{lem:twoface}
Let $\Delta$ be a pure, relevant $r$-dimensional simplicial complex. If $\sigma \neq \varnothing$ is a simplex of $\Delta$, then  every facet of $\link_{\sigma}(\Delta)$ has at most two codimension $1$ faces that are interior faces of $\link_{\sigma}(\Delta)$. Moreover,
  \vspace{-1.5mm}
  \begin{enumerate}
  \item A facet of $\link_{\sigma}(\Delta)$ has no codimension $1$ faces that are interior if and only if $\sigma$ uses some color at least twice. 
  \item A facet $\tau$ of $\link_{\sigma}(\Delta)$ has exactly one codimension $1$ face that is interior if and only if $\tau$ shares a color with $\sigma$.
  \item A facet $\tau$ of $\link_{\sigma}(\Delta)$ has exactly two codimension $1$ faces that are interior if and only if $\tau$ uses some color at least twice. 
  \end{enumerate}
\end{lemma}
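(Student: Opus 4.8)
The plan is to reduce everything to one combinatorial observation: a facet of $\link_\sigma(\Delta)$ corresponds to a facet of $\Delta$ containing $\sigma$, and whether one of its codimension-$1$ faces is interior is controlled entirely by the color of the vertex removed. So the first step is to record that, since $\Delta$ is pure of dimension $r$, every facet $\tau$ of $\link_\sigma(\Delta)$ has the form $\tau = F\setminus\sigma$ for a unique facet $F$ of $\Delta$ containing $\sigma$; here $\sigma\cap\tau = \varnothing$, so $|F| = |\sigma| + |\tau| = r+1$, and the codimension-$1$ faces of $\tau$ in $\link_\sigma(\Delta)$ are precisely the sets $\tau\setminus\{v\}$ with $v\in\tau$.

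Next I would unwind the definition of an interior face. The face $\tau\setminus\{v\}$ fails to be interior exactly when it lies in $\link_\sigma(\cB_r)$, i.e. when $(\tau\setminus\{v\})\cup\sigma = F\setminus\{v\}$ belongs to $\cB_r$; since $F\setminus\{v\}$ has only $r$ vertices it is automatically at most $r$-dimensional, so this happens precisely when $F\setminus\{v\}$ is irrelevant. Thus $\tau\setminus\{v\}$ is interior if and only if $\colo(F\setminus\{v\}) = [r]$. An $r$-element vertex set whose color set is all of $[r]$ must use every color exactly once, so this forces $F$ itself to be relevant and forces $v$ to carry the color repeated in $F$ — and, $F$ being relevant on $r+1$ vertices, exactly one color $c^\ast$ is repeated, appearing twice. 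Hence, when $F$ is relevant, the number of interior codimension-$1$ faces of $\tau$ equals the number of color-$c^\ast$ vertices of $F$ lying in $\tau$, a number in $\{0,1,2\}$; and when $F$ is not relevant, $\tau$ has no interior codimension-$1$ faces. This already gives the ``at most two'' claim. (For the three equivalences one needs $F = \sigma\cup\tau$ to be relevant; in the setting where the lemma is applied one may assume every facet of $\Delta$ is relevant, since deleting an irrelevant facet $F$ leaves the $B$-saturation of $I_\Delta$ unchanged: for any vertex $w$ of the color missing from $F$ one has $x_w\cdot\prod_{u\in F}x_u\in I_\Delta$ because $F$ is a facet, and every generator of $B$ is divisible by such an $x_w$, so $\prod_{u\in F}x_u\in I_\Delta : B$.)

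Then, with $F$ relevant, I would read off the trichotomy by distributing the two color-$c^\ast$ vertices of $F$ across the disjoint pieces $\sigma$ and $\tau$. Both lie in $\sigma$ iff $\sigma$ uses color $c^\ast$ twice, iff $\sigma$ uses some color twice (the only color $F\supseteq\sigma$ can repeat is $c^\ast$), iff the count is $0$ — this is (1). Symmetrically, both lie in $\tau$ iff $\tau$ uses some color twice, iff the count is $2$ — this is (3). In the remaining case $\sigma$ and $\tau$ each contain exactly one color-$c^\ast$ vertex, so the count is $1$; and since for each color $d\neq c^\ast$ the facet $F = \sigma\sqcup\tau$ contains only one vertex of color $d$, the faces $\sigma$ and $\tau$ share a color iff they both meet color $c^\ast$, iff this last case occurs — this is (2). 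The three hypotheses are then manifestly mutually exclusive and exhaustive.

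The step I expect to be the main obstacle is the color bookkeeping just sketched: one has to keep invoking that a relevant facet on $r+1$ vertices has a \emph{unique} repeated color, both to pin the count at the right value in each case and to see that ``$\sigma$ repeats a color'', ``$\tau$ repeats a color'', and ``$\sigma$ and $\tau$ share a color'' really cannot overlap. The only other delicate point is the reduction guaranteeing that $\sigma\cup\tau$ is a relevant facet — without it an irrelevant facet would contribute a link-facet with no interior codimension-$1$ faces even though $\sigma$ need not repeat a color, so equivalence (1) only holds for complexes all of whose facets are relevant.
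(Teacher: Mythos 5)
Your proof is correct and follows essentially the same route as the paper's: identify the unique twice-used color in the $(r+1)$-vertex facet $\sigma\cup\tau$, observe that a codimension~$1$ face of $\tau$ is interior exactly when the removed vertex carries that repeated color, and read off the count $0$, $1$, or $2$ from how the two vertices of that color are distributed between $\sigma$ and $\tau$. The only difference is that you explicitly note the equivalences require the facet $\sigma\cup\tau$ to be relevant and justify reducing to complexes with only relevant facets, whereas the paper simply asserts this relevance ``by assumption''; your added care is harmless and consistent with how the lemma is applied.
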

\begin{proof}
Let $\tau$ be a facet of $\link_{\sigma}(\Delta)$. 
By assumption, $\tau\cup\sigma$ is relevant and $\dim(\tau\cup\sigma) = r$. 
Since $\tau\cup\sigma$ is relevant, it has color $[r]$; since $\tau\cup\sigma$ has dimension $r$, it has exactly $r+1$ vertices. 
Putting these two facts together, it follows that in $\tau\cup\sigma$ exactly one color is used twice. The rest of the argument proceeds by carefully considering the locations of that twice-used color.

Let the vertices of the twice-used color in $\tau\cup\sigma$ be labeled $v_1$ and $v_2$. 
Now, a codimension 1 face $\gamma$ of $\tau$ is an interior face if and only if 
$\tau\setminus\gamma\subset \left\{v_1,v_2\right\}$,
since both of these conditions are the same as requiring that $\gamma\cup\sigma$ be colored by $[r]$. 
But this immediately implies that $\tau$ contains at most two codimension $1$ faces that are interior. 

Moving now to the exact number of codimension $1$ faces of $\tau$ that are interior faces, 
we consider which of $\tau$ or $\sigma$ contains each of the vertices $v_1$ and $v_2$.
\vspace{-1.5mm}
\begin{enumerate}
\item There are no codimension $1$ faces of $\tau$ that are interior faces if and only if $v_1,v_2\in\sigma$, which is equivalent to $\sigma$ using some color twice. 

\item There is one codimension $1$ face of $\tau$ that is an interior face if and only if $v_i\in\sigma$ for precisely one $i\in\{1,2\}$. In this case, for $j\neq i$, we have $v_j\in \tau$. Therefore, $\mathcolor(\sigma)\cap\mathcolor(\tau)\neq \varnothing$.

\item There are two codimension $1$ faces of $\tau$ that are interior faces if and only if both $v_1,v_2\in\tau$. This is equivalent to $\tau$ using some color at least twice.
\qedhere
\end{enumerate}
\end{proof}

We are now prepared to prove Theorem~\ref{thm:relevant-connected-ideal-VCM}.  The proof makes heavy use of Reisner's criterion, which we record below for convenience.  Reisner showed in his thesis that $S/I_\Delta$ is Cohen--Macaulay if and only if $\Delta$ is Cohen--Macaulay as a simplicial complex.  It is for this reason, combined with the statement of Reisner's criterion, that the proof of Theorem~\ref{thm:relevant-connected-ideal-VCM} centers on the computation of reduced simplicial homology.

\begin{theorem}[Reisner's Criterion]
A simplicial complex $\Delta$ is Cohen--Macaulay if and only if $\widetilde{H}_i(\link_\sigma(\Delta);k) = 0$ for all $i<\dim \link_\sigma(\Delta)$.
\end{theorem}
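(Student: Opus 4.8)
The plan is to derive Reisner's criterion from two classical facts: the local-cohomology characterisation of the Cohen--Macaulay property, and Hochster's formula expressing the local cohomology of a Stanley--Reisner ring through the reduced homology of links. Write $R=k[x_1,\dots,x_n]$, let $\mathfrak m=(x_1,\dots,x_n)$, abbreviate $k[\Delta]=R/I_\Delta$, and recall that ``Cohen--Macaulay'' refers to $k[\Delta]$ and is insensitive to the (positive) grading, so it is harmless to work $\mathbb Z^n$-graded. From the graded form of Grothendieck's (non)vanishing theorems, $H^i_{\mathfrak m}(k[\Delta])=0$ for $i>d:=\dim k[\Delta]$, $H^d_{\mathfrak m}(k[\Delta])\neq 0$, and $\operatorname{depth} k[\Delta]=\min\{i: H^i_{\mathfrak m}(k[\Delta])\neq 0\}$; hence $k[\Delta]$ is Cohen--Macaulay if and only if $H^i_{\mathfrak m}(k[\Delta])=0$ for all $i<d$. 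Since $I_\Delta=\bigcap_{F}(x_j: j\notin F)$ over the facets $F$ of $\Delta$, one has $d=\dim\Delta+1$.

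Next I would record Hochster's formula in the form needed. Computing $H^\bullet_{\mathfrak m}(k[\Delta])$ from the \v Cech complex on $x_1,\dots,x_n$ and splitting it into $\mathbb Z^n$-graded strands, the strand in degree $\mathbf a$ is acyclic unless $\mathbf a\le 0$ (for $\mathbf a$ with some positive coordinate the strand is the reduced cochain complex of a cone, hence contractible); and for $\mathbf a\le 0$, writing $\sigma=\{j: a_j<0\}$, the strand is a fixed shift of the reduced simplicial cochain complex of $\link_{\sigma}(\Delta)$, so $H^i_{\mathfrak m}(k[\Delta])_{\mathbf a}\cong\widetilde H_{i-|\sigma|-1}(\link_\sigma(\Delta);k)$, depending on $\mathbf a$ only through $\sigma$. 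Since every face $\sigma$ of $\Delta$ is realised in this way, one gets
\[
H^i_{\mathfrak m}(k[\Delta])=0 \iff \widetilde H_{i-|\sigma|-1}(\link_\sigma(\Delta);k)=0 \text{ for every face } \sigma\in\Delta.
\]
If Hochster's formula is taken as a black box (e.g.\ from \cite{MS04}), this step is just a citation; otherwise its content is precisely the degreewise identification of the \v Cech complex with these reduced cochain complexes, which is the technical heart of the argument.

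Then I would combine the two. Applying the displayed equivalence for each $i<d=\dim\Delta+1$ and reindexing $j=i-|\sigma|-1$, the Cohen--Macaulay property becomes: for every face $\sigma\in\Delta$, $\widetilde H_j(\link_\sigma(\Delta);k)=0$ for all $j<\dim\Delta-|\sigma|$. Because a facet of $\link_\sigma(\Delta)$ together with $\sigma$ is a face of $\Delta$, one always has $\dim\link_\sigma(\Delta)\le\dim\Delta-|\sigma|$, so this condition immediately gives the one in the statement (this is already the ``only if'' direction, with no extra input). For the converse, note that if the inequality is strict for some $\sigma$, then a facet of $\link_\sigma(\Delta)$ adjoined to $\sigma$ is a facet of $\Delta$ of dimension $<\dim\Delta$, so $\Delta$ is non-pure; and a standard bookkeeping argument shows the stated link-vanishing condition forces $\Delta$ to be pure (a deficient facet produces, at the level of the link of a suitably chosen smaller face, or of $\Delta$ itself, a disconnection in positive dimension). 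Once $\Delta$ is pure, $\dim\link_\sigma(\Delta)=\dim\Delta-|\sigma|$ for every $\sigma$ and the two conditions literally coincide; so both implications drop out of the same equivalence.

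The main obstacle is Hochster's formula itself---concretely, the degreewise matching of the \v Cech complex of $k[\Delta]$ with shifted reduced cochain complexes of links, including the verification that the strands with some $a_j>0$ are acyclic. Everything else (the local-cohomology criterion, $d=\dim\Delta+1$, the reindexing, and the minor purity bookkeeping) is routine. If one prefers to avoid local cohomology, an alternative route is a double induction on $\dim\Delta$ and the number of vertices, using the short exact sequences relating $k[\Delta]$, $k[\operatorname{del}_\Delta v]$, and $k[\operatorname{star}_\Delta v]$, together with the Mayer--Vietoris sequence of $\Delta=\operatorname{star}_\Delta v\cup\operatorname{del}_\Delta v$; but the Hochster-formula route is the most transparent and is what I would write.
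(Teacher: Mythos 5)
The paper does not actually prove this statement: Reisner's criterion is recorded as a classical theorem (attributed to Reisner's thesis) and used as a black box, so there is no internal proof to compare yours against. Your argument is the standard derivation via the local cohomology characterization of depth together with Hochster's formula, and it is essentially correct. Two points deserve explicit care. First, Hochster's formula is the real technical content; citing it from \cite{MS04} is appropriate, and your description of the $\mathbb{Z}^n$-graded strands of the \v{C}ech complex --- acyclic when some coordinate is positive, and a shifted reduced (co)chain complex of $\link_{\sigma}(\Delta)$ with $\sigma=\{j: a_j<0\}$ otherwise --- is the right picture (over a field the distinction between reduced homology and cohomology is immaterial for the vanishing statement). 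Second, the purity step in the converse direction should be made precise rather than waved at: the clean argument is an induction on $\dim\Delta$ using that links of links are links, so each vertex link is pure by induction, whence all facets through a fixed vertex have equal dimension; connectivity of $\Delta$ (which is $\widetilde{H}_0(\Delta;k)=0$, part of the hypothesis once $\dim\Delta\ge 1$) then propagates this common dimension across vertices via edges, since two vertices joined by an edge lie in a common facet. With that filled in, the two formulations of the vanishing condition (with $\dim\link_{\sigma}(\Delta)$ versus $\dim\Delta-|\sigma|$) coincide on pure complexes and both implications follow from the single displayed equivalence, exactly as you say.
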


\begin{proof}[Proof of Theorem~\ref{thm:relevant-connected-ideal-VCM}]
First note that we may assume that all facets of $\Delta$ are relevant. We claim that all $r$-dimensional relevant-connected simplicial complexes having no irrelevant facets are pure. Note that an $r$-dimensional simplicial complex corresponds to a $1$-dimensional subvariety of $\PP^{\mathbf{n}}$ and relevant-connectivity of $\Delta$ implies connectivity of the corresponding variety. Since all connected 1-dimensional subvarieties of $\PP^{\mathbf{n}}$ are equidimensional, there cannot be any lower dimensional relevant facets of $\Delta$. Then since $\Delta$ contains only relevant facets, $\Delta$ is pure.
   
We will produce a simplicial complex $\Delta'$ that is Cohen--Macaulay and differs from $\Delta$ in only irrelevant faces.
Note the theorem is trivially true when $r=1$, 
since this is the case of a single projective space, and, in this case, $\Delta$ must be a 1-dimensional pure and relevant-connected simplicial complex. It now follows easily from Reisner's criterion that $\Delta$ is Cohen--Macaulay. 

When $r>1$, there are two cases. 
First, consider the case that $\Delta$ is of the form $\Delta=\join(\tau,\Omega)$ for a face $\tau\neq \varnothing$ and a simplicial subcomplex $\Omega$ with $\mathcolor(\tau)\cap\mathcolor(\Omega)=\varnothing$.
Since $\tau$ is a simplex, there is a bijection between the top-dimensional cells of $\Delta$ and those of $\Omega$.
Thus, since $\Delta$ is relevant-connected, $\Omega$ is, too. Further, since $\dim \Delta=r$,  $\dim \Omega = r-\dim \tau-1$. Moreover, since any face of $\Delta$ uses at most one color twice, we know that either $\dim\tau = \left|\mathcolor(\tau)\right|$ or $\dim\tau = \left|\mathcolor(\tau)\right|-1$. In the first case, we find that $\dim\Omega = \left|\mathcolor(\Omega)\right|-1$. Now restricting to the colors in $\mathcolor(\Omega)$ and applying \cite[Theorem 1.3]{reu2019} to $\Omega$, we can construct a Cohen--Macaulay simplicial complex $\Omega'$ that differs from $\Omega$ only on irrelevant faces.

On the other hand, if $\dim\sigma = \left|\mathcolor(\sigma)\right|-1$, then $\dim\Omega = \left|\mathcolor(\Omega)\right|-1$ and $\dim\Omega<r$ so by replacing $\Delta$ with $\Omega$ and by using induction on $r$, we will construct a Cohen--Macaulay simplicial complex $\Omega'$ differing from $\Omega$ only on irrelevant faces. 

Now we take the simplicial complex $\Omega'$ and let $\Delta'=\join(\sigma,\Omega')$. Then $\Delta'$ differs from $\Delta$ only in irrelevant faces.
Since $\sigma$ is a simplex, $\join(\sigma,\Omega')$ can be constructed by iteratively taking the cone over $\Omega'$ by the vertices in $\sigma$. Since the cone over a Cohen--Macaulay simplicial complex is Cohen--Macaulay, $\Delta'$ is Cohen--Macaulay, and so $\Delta$ is virtually Cohen--Macaulay.

For the second and final case, suppose that $\Delta$ is not of the form $\Delta=\join(\tau,\Omega)$, where $\tau\neq\varnothing$ is a face of $\Delta$ and $\mathcolor(\tau)\cap\mathcolor(\Omega)=\varnothing$. 
Then, define $\Delta'=\Delta\cup \cB_r$. 
We claim that $\Delta'$ is Cohen--Macaulay, and we will show this using Reisner's criterion, i.e.,  we will show that 
\begin{equation}
\label{eqn:Reisner}
\widetilde{H}_{i}(\link_{\sigma}(\Delta');k)=0 
\quad\text{for each face $\sigma$ of $\Delta'$ and all $i<d = \dim\link_{\sigma}(\Delta')$}.
\end{equation}
Since $\Delta'=\Delta\cup \cB_{r}$,
it follows that 
$\link_{\sigma}(\Delta') 
= \link_{\sigma}(\Delta) \cup \link_{\sigma}(\cB_r).
$
Then the long exact sequence of a pair yields the exact sequence 
\begin{equation}\label{eqn:les-pair}
\widetilde{H}_{i}( \link_{\sigma}(\cB_r);k)\rightarrow 
\widetilde{H}_{i}(\link_{\sigma}(\Delta');k)\rightarrow 
H_{i}(\link_{\sigma}(\Delta'), \link_{\sigma}(\cB_r);k)\rightarrow
\widetilde{H}_{i-1}( \link_{\sigma}(\cB_r);k).
\end{equation}
Then for any $i$, so long as $\widetilde{H}_{i}(\link_{\sigma}(\cB_{r});k)=\widetilde{H}_{i-1}(\link_{\sigma}(\cB_{r});k)=0$, it suffices to show
\begin{equation}
\label{eqn:Reisner-relative}
H_{i}(\link_{\sigma}(\Delta'), \link_{\sigma}(\cB_r);k)=0.
\end{equation}
We will first treat the case of $\sigma \neq \varnothing$ and then separately handle the case of $\sigma = \varnothing$. 

For $\sigma\neq\varnothing$, 
since $H_{i}( \link_{\sigma}(\cB_r);k)=0$ for $i<d$, by Lemma~\ref{lem:irrelevant-almost-cm} and thanks to~\eqref{eqn:Reisner-relative}, 
it suffices to show that 
$H_i(\link_{\sigma}(\Delta'), \link_{\sigma}(\cB_r);k)=0$ for all $i<d$. 
Notice that 
\[
H_i(\link_{\sigma}(\Delta'), \link_{\sigma}(\cB_r);k) 
= H_{i}(\link_\sigma(\Delta),\mbox{Ex}(\sigma, \Delta);k),
\] 
where $\mbox{Ex}(\sigma, \Delta)= \link_{\sigma}(\Delta)\cap\link_{\sigma}(\cB_r)$. 
To complete the proof for the case $\sigma\neq \varnothing$, we will show that $H_{i}(\link_\sigma(\Delta),\mbox{Ex}(\sigma, \Delta);k)=0$, starting with 
$i<d-2$. We will then treat separately the cases $i=d-2$ and $i=d-1$. 

When $i<d-2$, 
let $\tau$ be an $i$-face of $\link_{\sigma}(\Delta)$ of codimension at least $3$. 
Then we claim that $\tau$ is an exterior face. 
To see this, let $\tilde{\tau}$ be a facet of $\link_{\sigma}(\Delta)$ that contains $\tau$. 
Because $\tau$ is of codimension at least $3$ in $\tilde{\tau}$, it is contained in at least $3$ codimension $1$ faces of $\tilde{\tau}$, and therefore, by Lemma~\ref{lem:twoface}, it is contained in at least one exterior facet of $\tilde{\tau}$. 
Hence $\tau\in \mbox{Ex}(\sigma, \Delta)$, so  
$C_i(\link_{\sigma}(\Delta),\mbox{Ex}(\sigma, \Delta);k)=0$ and 
$H_i(\link_{\sigma}(\Delta),\mbox{Ex}(\sigma, \Delta);k)=0$ for $i<d-2$, as desired.

When $i=d-2$, we must show that  
$H_{d-2}(\link_{\sigma}(\Delta),\mbox{Ex}(\sigma, \Delta);k)=0$. 
To do so, we will again show that every $(d-2)$-face $\tau$ in $\link_{\sigma}(\Delta)$ is a boundary relative to $\mbox{Ex}(\sigma, \Delta)$. 
Without loss of generality, assume that $\tau$ is not in $\mbox{Ex}(\sigma, \Delta)$. 
Let $\tilde{\tau}$ be a facet of $\link_{\sigma}(\Delta)$ containing $\tau$. 
Since $\tau$ is of codimension $2$ in $\link_{\sigma}(\Delta)$, $\tau$ is contained in exactly two codimension $1$ faces of $\tilde{\tau}$. 
Further, since $\tau$ is not in $\mbox{Ex}(\sigma, \Delta)$, it must be that both of these codimension $1$ faces of $\tilde{\tau}$ are interior faces; call one of them $\xi$. 
By Remark~\ref{rmk:intersect-ext-int}, the other codimension $1$ faces of $\xi$ besides $\tau$ must be in $\mbox{Ex}(\sigma, \Delta)$. 
Therefore, up to sign, the relative boundary with respect to $\mbox{Ex}(\sigma, \Delta)$ of $\xi$ is $\tau$. 
Since $\tau$ was arbitrary, 
$H_{d-2}(\link_{\sigma}(\Delta),\mbox{Ex}(\sigma, \Delta);k)=0$, as desired. 

Finally, when $i=d-1$, we must show that $H_{d-1}(\link_{\sigma}(\Delta),\mbox{Ex}(\sigma, \Delta);k)=0$. 
To do so, we will use Lemma~\ref{lem:twoface} to construct a graph (with loops). In the graph $G$, there is a distinguished vertex $*$, while the other vertices correspond to the codimension $1$ faces of $\link_{\sigma}(\Delta)$ that are interior. 
Edges are placed to connect vertices corresponding to interior faces that are both contained in a  common facet of $\link_{\sigma}(\Delta)$. 
When a facet of $\link_{\sigma}(\Delta)$ has one codimension $1$ face that is interior, then an edge is placed between the vertex for that facet and $*$. Finally, if a facet of $\link_{\sigma}(\Delta)$ has no interior faces, a loop is placed at $*$. 

Recall that we are currently in the case that $\Delta$ is not of the form $\join(\tau,\Omega)$, where $\tau$ is a face of $\Delta$ and $\mathcolor(\tau)\cap\mathcolor(\Omega)=\varnothing$. 
We claim that, in this case, $\link_{\sigma}(\Delta)$ contains at least one facet that has at most one codimension $1$ face that is interior. 
To see this, by way of contradiction, suppose that in $\link_{\sigma}(\Delta)$, all facets contain exactly two codimension 1 faces that are interior. 
Let $\tau\in \link_{\sigma}(\Delta)$ be such a facet, in which case $\tau\cup\sigma$ is a facet of $\Delta$. Since $\tau$ has exactly two codimension $1$ faces that are interior, by Lemma~\ref{lem:twoface}, $\tau$ uses some color at least twice. 
Then, since $\tau\cup\sigma$ contains $r+1$ vertices and there are only $r$ possible colors, it must be that the colors used in $\sigma$ are present only in $\sigma$ and not in $\tau$.
But since a relevant simplex must contain all colors, the relevant facets 
of $\tau\cup\sigma$ must all contain $\sigma$. 
Further, since $\Delta$ is relevant-connected, repeating this for the successive neighbors of $\tau\cup\sigma$ in $\Delta$, we find that all facets of $\Delta$ contain $\sigma$, and thus $\Delta=\join(\sigma,\Omega)$ for some $\Omega$, a contradiction. 
Therefore, it must be that $\link_{\sigma}(\Delta)$ contains at least one facet for which at most one of its codimension $1$ faces is interior. 

By the previous paragraph, the graph $G$ is connected, 
and there is a commutative diagram
\begin{equation}
\label{eqn:graph-commutative-diagram}
  \begin{tikzcd}
C_{d}(\link_{\sigma}(\Delta),\mbox{Ex}(\sigma, \Delta);k) \arrow[r]\arrow[d,"\cong"]
&
C_{d-1}(\link_{\sigma}(\Delta),\mbox{Ex}(\sigma, \Delta);k)\arrow[d,"\cong"]
\\
C_1(G,*;k) \arrow[r,twoheadrightarrow]
&
C_0(G,*;k).
\end{tikzcd}
\end{equation}
The surjectivity of the bottom map in \eqref{eqn:graph-commutative-diagram} is a consequence of the fact that $G$ is connected, so  $H_0(G,*,k)=0$. 
Since the vertical maps in the diagram are isomorphisms, the top map in \eqref{eqn:graph-commutative-diagram} is also surjective. Therefore, $H_{d-1}(\link_{\sigma}(\Delta),\mbox{Ex}(\sigma, \Delta);k)=0$, which concludes the proof of \eqref{eqn:Reisner} for any face $\sigma\neq \varnothing$ in $\Delta'$. 

It now remains to show that condition \eqref{eqn:Reisner} holds for $\sigma=\varnothing$. 
Before beginning this portion of the proof, note that the argument in the cases that $\sigma\neq\varnothing$ and $i\le d-2$ case above apply here as well to show that 
\begin{equation}
\label{eqn:rel-vng}
H_{i}(\Delta',\cB_r;k)=0 \quad\text{for}\quad i\le r-2. 
\end{equation}
Now consider the case that $\sigma=\varnothing$ and $i<r-2$, where $r=\dim(\Delta')$. 
By Lemma~\ref{lem:irrelevant-almost-cm}, $H_{i}(\cB_r;k)=0$ for $i<r-2$. 
Putting this together with \eqref{eqn:rel-vng}, it now follows from \eqref{eqn:les-pair} that $\widetilde{H}_i(\Delta';k)=0$ for all $i<r-2$. 

It remains to show that condition \eqref{eqn:Reisner} holds for $\sigma=\varnothing$ in the cases $i=r-2$ and $i=r-1$. 
The long exact sequence of a pair together with \eqref{eqn:rel-vng} yield the exact sequence: 
\[
H_{r-1}(\cB_r;k)\rightarrow 
H_{r-1}(\Delta';k)\rightarrow 
H_{r-1}(\Delta',\cB_r;k)\rightarrow 
H_{r-2}(\cB_r;k)\rightarrow 
H_{r-2}(\Delta';k)\rightarrow 
0. 
\]
Applying Lemma~\ref{lem:irrelevant-almost-cm}, 
this simplifies to 
\begin{equation}
  \label{eqn:empty-simplex-exact-sequence}
0\rightarrow 
H_{r-1}(\Delta';k)\rightarrow 
H_{r-1}(\Delta',\cB_r;k)
\rightarrow k \rightarrow 
H_{r-2}(\Delta';k)\rightarrow 0. 
\end{equation}
Thus, it suffices to show that 
$H_{r-1}(\Delta',\cB_r;k)=k$ and that 
the map $H_{r-1}(\Delta';k)\rightarrow 
H_{r-1}(\Delta',\cB_r;k)$ 
is the zero map, since this would imply that the map $H_{r-1}(\Delta',\cB_r;k)\rightarrow k$ is an isomorphism, so that $H_{r-1}(\Delta';k)=H_{r-2}(\Delta';k)=0$, as desired.

To see that $H_{r-1}(\Delta',\cB_r;k)=k$, 
note first that the codimension $1$ faces of any $(r-1)$-simplex are irrelevant for dimension reasons. Thus, the boundary of any $(r-1)$-face in $\Delta'$ belongs to $\cB_r$, so the $(r-1)$-faces of $\Delta$ provide a generating set for $H_{r-1}(\Delta',\cB_r;k)$. 
By Lemma~\ref{lem:twoface}, every $r$-dimensional relevant face of $\Delta$ has precisely two relevant $(r-1)$-faces in its boundary. 
Further, since $\Delta$ is relevant-connected, 
every relevant $(r-1)$-face of $\Delta'$ is nonzero and homologically equivalent up to sign. 
Therefore, $H_{r-1}(\Delta',\cB_r;k)=k$ and any relevant $(r-1)$-face of $\Delta'$ gives a generator of this homology group. 

Finally, to see that 
$H_{r-1}(\Delta';k)\rightarrow 
H_{r-1}(\Delta',\link_{\sigma}(\cB_r))$ 
is the zero map, 
let $\Gamma$ be a simplex on the color set $[r]$. 
There is a projection $\Delta'\rightarrow \Gamma$, given by mapping all vertices of a given color $i$ onto the vertex $i$. 
Since any relevant $(r-1)$-face of $\Delta'$ gives a generator of $H_{r-1}(\Delta',\cB_r;k)=k$ and the induced map sends this generator to a nonzero element of $H_{r-1}(\Gamma,\partial \Gamma) = k$, this induced map $H_{r-1}(\Delta',\cB_r)\rightarrow H_{r-1}(\Gamma,\partial \Gamma)$ is an isomorphism.

Now consider the following diagram induced by the map $\Delta'\rightarrow \Gamma$
\[
\begin{tikzcd}
  H_{r-1}(\Delta') \arrow[r] \arrow[d] & H_{r-1}(\Gamma) = 0 \arrow[d] \\
  H_{r-1}(\Delta',\cB_r) \arrow[r,"\cong"] & H_{r-1}(\Gamma,\partial \Gamma).
\end{tikzcd}
\]
This diagram commutes, and, thus, the map $H_{r-1}(\Delta')\rightarrow H_{r-1}(\Delta',\cB_r)$ is the zero map. Applying this fact to the exact sequence \eqref{eqn:empty-simplex-exact-sequence}, we get $H_{r-1}(\Delta';k)=H_{r-2}(\Delta';k)=0$.
\end{proof}

\begin{example}
Continuing with Example~\ref{ex:interior-exterior}, one of the critical steps in the proof of Theorem~\ref{thm:relevant-connected-ideal-VCM} is the reduction of some of the more troublesome homology groups (in the case that $\sigma\neq\varnothing$ and $i=d-1$) to the homology of a graph by the construction of the graph given by the interior faces of the link. It is Lemma~\ref{lem:twoface} that allows such a graph to be constructed. In Figure~\ref{fig:octagon2} that graph is shown with the vertices given by $\times$ symbols, the edges given by dashed lines, and the half edges are illustrated with an edge terminated with a $\circ$ symbol.
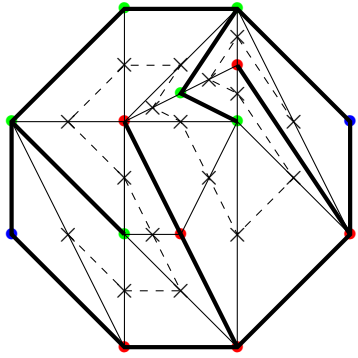
\begin{figure}[h!]
\centering
\caption{The graph associated to the complex of interior faces of the link from Figure~\ref{fig:octagon1}.}
\label{fig:octagon2}
    \begin{tikzpicture}[scale=1.5]
    \coordinate (a) at (0,0);
    \node[blue] at (a) {$\bullet$};
    \coordinate (b) at (1,-1);
    \node[red] at (b) {$\bullet$};
    \coordinate (c) at (2,-1);
    \node[red] at (c) {$\bullet$};
    \coordinate (d) at (3,0);
    \node[red] at (d) {$\bullet$};
    \coordinate (e) at (3,1);
    \node[blue] at (e) {$\bullet$};
    \coordinate (f) at (2,2);
    \node[green] at (f) {$\bullet$};
    \coordinate (g) at (1,2);
    \node[green] at (g) {$\bullet$};
    \coordinate (h) at (0,1);
    \node[green] at (h) {$\bullet$};

    \coordinate (x1) at (1,1);
    \node[red] at (x1) {$\bullet$};
    \coordinate (x2) at (1,0);
    \node[green] at (x2) {$\bullet$};
    \coordinate (x3) at (1.5,0);
    \node[red] at (x3) {$\bullet$};
    \coordinate (x4) at (2,1);
    \node[green] at (x4) {$\bullet$};

    \coordinate (x5) at (2,1.5);
    \node[red] at (x5) {$\bullet$};

    \coordinate (x6) at (1.5,1.25);
    \node[green] at (x6) {$\bullet$};

    \draw[ultra thick] (a) -- (b) -- (c) -- (d) -- (e) -- (f) -- (g) -- (h) -- cycle;

    \draw (h)--(b);
    \draw (x1)--(b);
    \draw (d)--(f);
    
    \draw (h) -- (x2) -- (x1) -- (x4) -- (x3) -- (x2) -- (c);
    \draw (c) -- (x3);
    \draw (c) -- (x4) -- (d);
    \draw (x4) -- (x5) -- (f) -- (x1);
    \draw (x1) -- (x6) -- (x5) -- (d);
    \draw (x1) -- (h);
    \draw (x1) -- (g);
    \draw (x1)--(x3);
    \draw (f)--(x6)--(x4);

    \draw[ultra thick] (c)--(x3)--(x1);
    \draw[ultra thick] (h) -- (x2);
    \draw[ultra thick] (f) -- (x6) -- (x4);
    \draw[ultra thick] (d) -- (x5);

    \coordinate (v1) at ($(h)!0.5!(b)$);
    \coordinate (v2) at ($(x2)!0.5!(b)$);
    \coordinate (v2p5) at ($(x2)!0.5!(c)$);
    \coordinate (v3) at ($(x2)!0.5!(x3)$);
    \coordinate (v4) at ($(x2)!0.5!(x1)$);
    \coordinate (v5) at ($(h)!0.5!(x1)$);
    \coordinate (v6) at ($(g)!0.5!(x1)$);
    \coordinate (v7) at ($(f)!0.5!(x1)$);
    \coordinate (v8) at ($(x6)!0.5!(x1)$);
    \coordinate (v9) at ($(x4)!0.5!(x1)$);
    \coordinate (v10) at ($(x4)!0.5!(x3)$);
    \coordinate (v11) at ($(x4)!0.5!(c)$);
    \coordinate (v12) at ($(x4)!0.5!(d)$);
    \coordinate (v13) at ($(x4)!0.5!(x5)$);
    \coordinate (v14) at ($(x6)!0.5!(x5)$);
    \coordinate (v15) at ($(f)!0.5!(x5)$);
    \coordinate (v16) at ($(f)!0.5!(d)$);

    \node at (v1) {$\times$};
    \node at (v2) {$\times$};
    \node at (v2p5) {$\times$};
    \node at (v3) {$\times$};
    \node at (v4) {$\times$};
    \node at (v5) {$\times$};
    \node at (v6) {$\times$};
    \node at (v7) {$\times$};
    \node at (v8) {$\times$};
    \node at (v9) {$\times$};
    \node at (v10) {$\times$};
    \node at (v11) {$\times$};
    \node at (v12) {$\times$};
    \node at (v13) {$\times$};
    \node at (v14) {$\times$};
    \node at (v15) {$\times$};
    \node at (v16) {$\times$};

    \draw[dashed] (v1)--(v2)--(v2p5)--(v3)--(v4)--(v5)--(v6)--(v7)--(v8)--(v9)--(v10)--(v11)--(v12)--(v13)--(v14)--(v15)--(v16);
    \end{tikzpicture}
\end{figure}
\end{example}

In light of Theorem~\ref{thm:relevant-connected-ideal-VCM}, to complete the proof of Theorem~\ref{thm:monomial-vcm}
it remains to show that it is enough to show that $S/I_\Delta$ is virtually Cohen--Macaulay on each of the components of its support. 

\begin{prop}
\label{prop:disjoint-support}
Let $S$ be the Cox ring of a smooth projective toric variety $X$, and let $M$ be a finitely generated $\Pic(X)$-graded $S$-module. 
If $M$ is module with equidimensional support $\mathcal{X}=\bigsqcup X_i$ with disjoint components $X_i$, then $M$ is virtually Cohen--Macaulay if each $M|_{X_i}$ is virtually Cohen--Macaulay.
\end{prop}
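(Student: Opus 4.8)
The plan is to build a virtual resolution of $M$ of length $\codim M$ by "gluing" together the given short virtual resolutions of the pieces $M|_{X_i}$, exploiting the fact that the components $X_i$ are pairwise disjoint (and all relevant, so each $M|_{X_i}$ genuinely sheafifies to something nonzero). First I would set up notation: let $\mathcal{X} = \bigsqcup_{i=1}^{s} X_i$ be the decomposition of $\supp \widetilde M$ into its (closed, pairwise disjoint) connected components, write $c = \codim M$, and note that since the support is equidimensional, each $X_i$ has codimension $c$ in $X$ as well, so $\codim (M|_{X_i}) = c$ for every $i$. By hypothesis each $M|_{X_i}$ is virtually Cohen--Macaulay, i.e.\ admits a virtual resolution $F^{(i)}_\bullet$ of length exactly $c$.

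The key step is to produce a single $S$-module $M'$ with $\widetilde{M'} \cong \widetilde M$ that is a direct sum of modules supported on the disjoint pieces, so that a length-$c$ virtual resolution of $M'$ can be assembled as the direct sum of the $F^{(i)}_\bullet$. Since virtual resolutions depend only on the sheafification, it suffices to find \emph{any} $S$-module whose sheaf is $\widetilde M$ and which splits appropriately; the natural candidate is $M' = \bigoplus_i M_i$, where $M_i$ is a module with $\widetilde{M_i} \cong (\widetilde M)|_{X_i}$ (for instance $M_i = M / H^0_{I_i'}(M)$ localized appropriately, or the image of $M$ in a suitable localization, or simply any module representing the restriction). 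Because the $X_i$ are disjoint \emph{closed} subsets of $X$, the sheaf $\widetilde M$ decomposes as $\bigoplus_i (\widetilde M)|_{X_i}$ — a coherent sheaf on a scheme is the direct sum of its restrictions to the connected components of its (scheme-theoretic or reduced) support when those components are disjoint closed sets — so $\widetilde{M'} \cong \bigoplus_i (\widetilde M)|_{X_i} \cong \widetilde M$. Then $\bigoplus_i F^{(i)}_\bullet$ is a complex of free $S$-modules whose sheafification is $\bigoplus_i (\widetilde{F^{(i)}}_\bullet)$, a direct sum of locally free resolutions, hence itself a locally free resolution of $\bigoplus_i (\widetilde M)|_{X_i} = \widetilde M$. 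This complex has length $\max_i \operatorname{length}(F^{(i)}_\bullet) = c$, so it is a virtual resolution of $M$ of length $c = \codim M$, and since $\widetilde M \neq 0$, $M$ is virtually Cohen--Macaulay.

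The main obstacle — and the point requiring the most care — is the identification $\widetilde M \cong \bigoplus_i (\widetilde M)|_{X_i}$ at the level of an honest $S$-module, i.e.\ choosing the $M_i$ correctly and checking that their direct sum really does sheafify to $\widetilde M$ rather than to something merely agreeing with it generically. Here one should argue as follows: let $Z_i \subset X$ be the other components, $Z_i = \bigsqcup_{j\neq i} X_j$, which is closed and disjoint from $X_i$; cover $X$ by the two opens $U_i = X \setminus Z_i$ and $V_i = X \setminus X_i$. On $V_i$ the sheaf $(\widetilde M)|_{X_i}$ vanishes and on $U_i$ it agrees with $\widetilde M$, and these gluings are compatible across $i$ precisely because the $X_i$ are mutually disjoint; a Mayer--Vietoris / sheaf-gluing argument then gives the claimed decomposition. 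One must also confirm the bookkeeping claim that $\operatorname{length}\big(\bigoplus_i F^{(i)}_\bullet\big) = \max_i \operatorname{length}(F^{(i)}_\bullet)$ and that termwise direct sums of locally free resolutions are locally free resolutions (immediate, since exactness and local freeness are both preserved by finite direct sums), together with the inequality $\vdim M \geq \codim M$ from \cite[Proposition~2.5]{virtual-original} to conclude that length $c$ is in fact minimal. I expect the sheaf-theoretic decomposition to be the only genuinely substantive point; everything else is formal.
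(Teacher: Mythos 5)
Your proposal is correct and follows essentially the same route as the paper: form $N=\bigoplus_i M|_{X_i}$, use disjointness of the $X_i$ to get $\widetilde N\cong\widetilde M$, and take the direct sum of the length-$\codim M$ virtual resolutions (equidimensionality ensuring all lengths agree). The extra detail you supply on the sheaf decomposition and on minimality of the length is fine but not a different argument.
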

\begin{proof}
Let $N = \bigoplus M|_{X_i}$. Then we claim that $\widetilde{M} \cong \widetilde{N}$. Since $N$ is a direct sum, we can decompose $\widetilde{N}$ as
\[
\widetilde{N}=\bigoplus \widetilde{M|_{X_i}}.
\] 
And since $ \mathcal{X}=\bigsqcup X_i$, we have 
\[
\widetilde{M}=\bigoplus\widetilde{M|_{X_i}}.
\]
Thus a virtual resolution of $N$ is a virtual resolution of $M$. Since $M|_{X_i}$ is virtually Cohen--Macaulay, $\vdim M|_{X_i} =\codim M|_{X_i}$. Since $ \mathcal{X}$ is equidimensional, we have that $\vdim M|_{X_i}=\codim M$. Finally, a direct sum of virtual resolutions is a virtual resolution of the direct sum, so $\vdim M=\vdim N=\codim M$.
\end{proof}

\begin{proof}[Proof of Theorem~\ref{thm:monomial-vcm}]
This result is now an immediate consequence of Theorem~\ref{thm:relevant-connected-ideal-VCM} and Proposition~\ref{prop:disjoint-support}, where $M$ in the proposition is $S/I_\Delta$ and the $X_i$ correspond to the relevant-connected components of $\Delta$. 
\end{proof}

\begin{example}
\label{ex:disjoint-lines}
Let $k[x_0,\ldots,x_3]$ be the Cox ring of $X=\PP^3$ and consider the ideal 
\[
J = \<x_0x_2,x_0x_3,x_1x_2,x_1x_3\>, 
\]
for which $S/J$ has free resolution 
\[
S^{1}
\xleftarrow{ 
	\begin{bmatrix}
    x_0x_2 & x_1x_2 & x_0x_3 & x_1x_3
    \end{bmatrix}
    }
S^{4}
\xleftarrow{ 
	\begin{bmatrix}
    -x_1 & 0 & -x_3 & 0 \\
    \phantom{-}x_0 & 0 & 0 & -x_3 \\
    0 & -x_1 & \phantom{-}x_2 & 0 \\
    0 & \phantom{-}x_0 & 0 & \phantom{-}x_2
    \end{bmatrix}
    }
S^{4}
\xleftarrow{ 
	\begin{bmatrix}
    \phantom{-}x_3 \\
    -x_2 \\
    -x_1 \\
    \phantom{-}x_0
    \end{bmatrix}
    }
S^{1}
\xleftarrow{}
0.
\]
Note that $J$ corresponds to a 1-dimensional simplicial complex with a single color, so Theorem~\ref{thm:monomial-vcm} implies that $S/J$ is virtually Cohen--Macaulay, with a short virtual resolution of the form 
\[
S^{2}
\xleftarrow{ 
	\begin{bmatrix}
    x_0 & x_1 & 0 & 0 \\
    0 & 0 & x_2 & x_3
    \end{bmatrix}
    }
S^{4}
\xleftarrow{ 
	\begin{bmatrix}
      -x_1 & 0 \\
      \phantom{-}x_0 & 0 \\
      0 & -x_3 \\
      0 & \phantom{-}x_2
      \end{bmatrix}
      }
S^{2}
\xleftarrow{}
0.
\]

See Example~\ref{ex:disjoint-lines-P4} for a discussion of the subscheme of $\PP^d$ cut out by $J$ when $d>3$.  
\end{example}

\begin{example}
\label{ex:monomial-nonCM-components}
Let $X$ =  $\PP^2\times \PP^2$, and consider the  simplicial complex $\Delta$ that is homeomorphic to a a cylinder, as shown in Figure~\ref{fig:cyl1}.
\begin{figure}[h!]
\centering
\caption{A cylindrical $\Delta$ on $\PP^2\times\PP^2$.}
\label{fig:cyl1}
  \begin{tikzpicture}
    \node (y0) at (1.9,1.5) {$y_0$};
    \node (y1) at (2.5,0) {$y_1$};
    \node (y2) at (2.5,3) {$y_2$};
    \node (x0) at (-0.6,1.5) {$x_0$};
    \node (x1) at (0,0) {$x_1$};
    \node (x2) at (0,3) {$x_2$};

    \draw (x0)--(y0);
    \draw (x1)--(y1);
    \draw (x2)--(y2);
    \draw (x2) -- (x0)--(x1);
    \draw[dotted] (x1)--(x2);
    \draw (y0)--(y1)--(y2)--(y0);
    \draw (x0)--(y1);
    \draw[dotted] (x1)--(y2);
    \draw (x2)--(y0);

    \fill[gray,opacity=0.2] (x0.center)--(y1.center)--(x1.center)--cycle;
    \fill[gray,opacity=0.2] (y1.center)--(y0.center)--(x0.center)--cycle;
    \fill[gray,opacity=0.2] (x0.center)--(y0.center)--(x2.center)--cycle;
    \fill[gray,opacity=0.2] (y2.center)--(y0.center)--(x2.center)--cycle;
    \fill[gray,opacity=0.2] (x1.center)--(x2.center)--(y2.center)--cycle;
    \fill[gray,opacity=0.2] (x1.center)--(y1.center)--(y2.center)--cycle;
  \end{tikzpicture}
\end{figure}
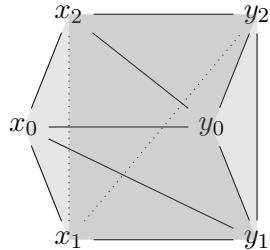

The Stanley--Reisner ideal corresponding to $\Delta$ is $I_{\Delta}=\left<x_0y_2,x_1y_0,x_2y_1,x_0x_1x_2,y_0y_1y_2\right\>$. Since $\widetilde{H}_1(\Delta;k)\neq 0$ and $\dim\Delta=2$, Reisner's criterion implies that $S/I_{\Delta}$ is not Cohen--Macaulay. On the other hand, if we consider the simplicial complex given by $\cB_{2}\cup \Delta$, which is illustrated in Figure~\ref{fig:cyl2} and corresponds to the ideal $J=\left<x_0y_2,x_1y_0,x_2y_1\right\>$, then one can check that Reisner's criterion is satisfied in this case. Since $\widetilde{I_{\Delta}}=\widetilde{J}$, we conclude that $S/I_{\Delta}$ is virtually Cohen--Macaulay.

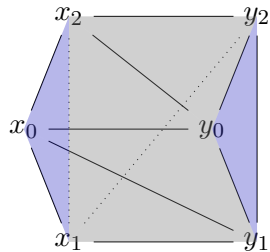
\begin{figure}[h]
\centering  
\caption{Adding irrelevant faces to $\Delta$ in Figure~\ref{fig:cyl1} yields a Cohen--Macaulay  complex. }
\label{fig:cyl2}
  \begin{tikzpicture}
    \node (y0) at (1.9,1.5) {$y_0$};
    \node (y1) at (2.5,0) {$y_1$};
    \node (y2) at (2.5,3) {$y_2$};
    \node (x0) at (-0.6,1.5) {$x_0$};
    \node (x1) at (0,0) {$x_1$};
    \node (x2) at (0,3) {$x_2$};

    \draw (x0)--(y0);
    \draw (x1)--(y1);
    \draw (x2)--(y2);
    \draw (x2) -- (x0)--(x1);
    \draw[dotted] (x1)--(x2);
    \draw (y0)--(y1)--(y2)--(y0);
    \draw (x0)--(y1);
    \draw[dotted] (x1)--(y2);
    \draw (x2)--(y0);

    \fill[gray,opacity=0.2] (x0.center)--(y1.center)--(x1.center)--cycle;
    \fill[gray,opacity=0.2] (y1.center)--(y0.center)--(x0.center)--cycle;
    \fill[gray,opacity=0.2] (x0.center)--(y0.center)--(x2.center)--cycle;
    \fill[gray,opacity=0.2] (y2.center)--(y0.center)--(x2.center)--cycle;
    \fill[gray,opacity=0.2] (x1.center)--(x2.center)--(y2.center)--cycle;
    \fill[gray,opacity=0.2] (x1.center)--(y1.center)--(y2.center)--cycle;
    
    \fill[blue,opacity=0.2] (x1.center)--(x2.center)--(x0.center)--cycle;
    \fill[blue,opacity=0.2] (y1.center)--(y2.center)--(y0.center)--cycle;
  \end{tikzpicture}
\end{figure}
\end{example}

We will observe in Example~\ref{ex:virtreg-noconverse} that $S/\sqrt{I}$ being virtually Cohen--Macaulay does not imply the same for $S/I$, even when $I$ is a monomial ideal. 
This example shows that the general monomial case cannot be reduced to the squarefree monomial case; moreover, it highlights that being virtually Cohen--Macaulay is a scheme-theoretic property rather than a set-theoretic one. 
For this reason, it is essential to develop tools to check the intuition developed through Theorem~\ref{thm:monomial-vcm} in the not-necessarily-radical case and to use the examples it delivers us to scaffold new ones as we build towards a theory of virtual depth. 
The remainder of this paper is directed at that transition.

\section{New virtual resolutions from old}
\label{sec:new-from-old}

We will now consider homological aspects of the virtually Cohen--Macaulay property.  In particular, we will now introduce two homological constructions that allow us to build new virtually Cohen--Macaulay modules from those we have shown to be virtually Cohen--Macaulay. Then, in the next section, we give homological obstructions to being virtually Cohen--Macaulay. 

For the remainder of the article, let $X$ be an arbitrary smooth projective toric variety with Cox ring $S$, and let $M$ be a finitely generated $\Pic(X)$-graded $S$-module. 

\subsection{A mapping cone construction}
\label{subsec:mapping-cone}

In this subsection, we introduce a mapping cone construction that, under certain conditions, will allow us to use a virtual resolution of the module $M$ to construct a shorter virtual resolution of $M$, given the right conditions. 

To begin, let $F_\bullet$ be a virtual resolution of $M$ of length $t$, and assume that $\Ext^t(M,S)^\sim=0$. Let $G^*_\bullet$ be a free resolution of $\Ext^t(M,S)$, shifted and with indexing reversed as in~\eqref{eqn:two-res}. 
By \cite[Prop. A3.13]{eisenbud}, there is an induced map, which we denote by $\alpha^*$, from $F_\bullet^* = \Hom^\bullet_S(F,S)$ to $G^*_\bullet$:
\begin{equation}
\label{eqn:two-res}
\begin{tikzcd}
\cdots\arrow[r]&0\arrow[r]\arrow[d,"\alpha_{0}^*"] &
F_0^*\arrow[r,"\varphi_1^*"]\arrow[d,"\alpha_{1}^*"] & 
F_1^*\arrow[r,"\varphi_2^*"] \arrow[d,"\alpha_{2}^*"] & 
\cdots \arrow[r,"\varphi_{t-2}^*"] & 
F_{t-2}^*\arrow[r,"\varphi_{t-1}^*"] \arrow[d,"\alpha_{t-2}^*"] & 
F_{t-1}^*\arrow[r,"\varphi_{t}^*"] \ar[d,"\alpha_{t-1}^*"] & 
F_t^*\arrow[r] \arrow[d,"\alpha_{t}^*"] &
0
\\
\cdots\arrow[r]&
G_{-1}^*\arrow[r,"\psi_{0}^*"'] &
G_0^*\arrow[r,"\psi_{1}^*"'] &
G_1^*\arrow[r,"\psi_{2}^*"'] &
\cdots\arrow[r,"\psi_{t-2}^*"']&
G_{t-2}^*\arrow[r,"\psi_{t-1}^*"'] & 
G_{t-1}^*\arrow[r,"\psi_{t}^* = \varphi_t^*"'] & 
G_t^*\arrow[r]& 
0.
\end{tikzcd}
\end{equation}
Dualizing yields the diagram
\begin{equation}
\label{eqn:tworesagain}
\begin{tikzcd}
\cdots&0\arrow[l]&F_0\arrow[l]  & 
F_1\arrow[l,"\varphi_1"']  & 
\cdots \arrow[l,"\varphi_{2}"']& 
F_{t-2}\arrow[l,"\varphi_{t-2}"'] & 
F_{t-1}\arrow[l,"\varphi_{t-1}"'] & 
F_t\arrow[l,"\varphi_{t}"'] &
\arrow[l] 0
\\
\cdots&
G_{-1}\arrow[l,"\psi_{-1}"] \arrow[u,"\alpha_{-1}"'] &
G_0\arrow[l,"\psi_{0}"] \arrow[u,"\alpha_{0}"'] &
G_1 \arrow[l,"\psi_{1}"] \arrow[u,"\alpha_{1}"'] & 
\cdots\arrow[l,"\psi_{2}"]&
G_{t-2}\arrow[l,"\psi_{t-2}"] \arrow[u,"\alpha_{t-2}"'] & 
G_{t-1}\arrow[l,"\psi_{t-1}"] \arrow[u,"\alpha_{t-1}"'] & 
G_t\arrow[l,"\psi_{t}=\varphi_t"] \arrow[u,"\alpha_{t}"'] & 
\arrow[l] 0.
\end{tikzcd}
\end{equation}
Then, the mapping cone of $\alpha\colon G\to F$, denoted $\cone(\alpha)$, is the complex
\begin{align*}
\cdots \gets G_{-2}\xleftarrow{\ \partial_0\ }
\begin{matrix} F_{0}\\ \oplus\\ G_{-1}\end{matrix}\xleftarrow{\ \partial_1\ } 
\begin{matrix} F_{1}\\ \oplus\\ G_{0}\end{matrix}\xleftarrow{\ \partial_2\ } 
\begin{matrix} F_{2}\\ \oplus\\ G_{1}\end{matrix}\xleftarrow{\ \partial_3\ } 
\cdots\gets 
\begin{matrix} F_{t-1}\\ \oplus\\ G_{t-2}\end{matrix}\xleftarrow{\ \partial_{t}\ } 
\begin{matrix} F_{t}\\ \oplus\\ G_{t-1}\end{matrix}\xleftarrow{\ \partial_{t+1}\ } 
\begin{matrix} 0\\ \oplus\\ G_{t}\end{matrix}\gets 0, 
\end{align*}
where the maps have the form 
$\partial_i = \begin{bmatrix} \varphi_i & \alpha_{i-1}\\ 0& \psi_{i-1}\end{bmatrix}$. 
Now, because $\psi_t = \varphi_t$, this reduces to the complex 
\begin{align}
\label{eqn:mapping-cone}
\cdots \gets G_{-2}\xleftarrow{\ \partial_0\ }
\begin{matrix} F_{0}\\ \oplus\\ G_{-1}\end{matrix}\xleftarrow{\ \partial_1\ } 
\begin{matrix} F_{1}\\ \oplus\\ G_{0}\end{matrix}\xleftarrow{\ \partial_2\ } 
\begin{matrix} F_{2}\\ \oplus\\ G_{1}\end{matrix}\xleftarrow{\ \partial_3\ } 
\cdots\gets 
\begin{matrix} F_{t-2}\\ \oplus\\ G_{t-3}\end{matrix}\gets 
G_{t-2}\gets 
0.
\end{align}

\begin{prop}
\label{prop:mapping-cone}
Let $S$ be the Cox ring of a smooth projective toric variety $X$, and let $M$ be a finitely generated $\Pic(X)$-graded $S$-module. 
Let $F_\bullet$ be a virtual resolution of $M$ of length $t$ such that $\Ext^t(M,S)^\sim = 0$, and let $\alpha$ be as in \eqref{eqn:tworesagain}. If $G_{-2}=0$ in \eqref{eqn:mapping-cone}, then (the minimization of) $\cone(\alpha)$ is a virtual resolution of $M$. 
\end{prop}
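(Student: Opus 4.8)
The plan is to run the long exact sequence in homology associated to the mapping cone, after sheafifying, and to show that the ``$G$-part'' of the cone becomes acyclic precisely because $\Ext^t(M,S)^\sim=0$. Write $E=\Ext^t_S(M,S)$. By construction $G^*_\bullet$ is a free resolution of $E$ (shifted and reindexed as in~\eqref{eqn:two-res}), so the dualized complex $G_\bullet=\Hom_S(G^*_\bullet,S)$ has homology modules $H_i(G_\bullet)$ which are, up to the reindexing, exactly the modules $\Ext^j_S(E,S)$. The cone of $\alpha\colon G_\bullet\to F_\bullet$ fits in a short exact sequence of complexes
\[
0 \longrightarrow F_\bullet \longrightarrow \cone(\alpha) \longrightarrow G_\bullet[1] \longrightarrow 0,
\]
whose connecting homomorphism is induced by $\alpha$. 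Each term $F_i\oplus G_{i-1}$ of $\cone(\alpha)$ is free, so $\widetilde{\cone(\alpha)}$ is a complex of vector bundles; and since sheafification is exact, the displayed sequence stays short exact after applying $(-)^\sim$. Thus $\cone(\alpha)$ is a virtual resolution of $M$ as soon as $H_0(\widetilde{\cone(\alpha)})=\widetilde M$ and $H_i(\widetilde{\cone(\alpha)})=0$ for $i>0$, and by the long exact sequence this follows once we know that $\widetilde G_\bullet$ is exact: then $\widetilde F_\bullet\hookrightarrow\widetilde{\cone(\alpha)}$ is a quasi-isomorphism, and $\widetilde F_\bullet$ is already a locally free resolution of $\widetilde M$ because $F_\bullet$ is a virtual resolution.

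So the heart of the proof is the exactness of $\widetilde G_\bullet$. The hypothesis $E^\sim=\Ext^t(M,S)^\sim=0$ says exactly that the finitely generated graded module $E$ is $B$-torsion, equivalently $\supp_S(E)\subseteq V(B)$. For every $j$ we then have $\supp_S\Ext^j_S(E,S)\subseteq\supp_S(E)\subseteq V(B)$, so each $\Ext^j_S(E,S)$ is again $B$-torsion and hence $\Ext^j_S(E,S)^\sim=0$. Since every $H_i(G_\bullet)$ is one of these $\Ext$ modules, $H_i(G_\bullet)^\sim=0$ for all $i$; as sheafification is exact, this is the same as saying that $\widetilde G_\bullet$ is an exact complex, as needed.

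It remains to deal with the bookkeeping and the role of the hypotheses. The reduction from the full cone to~\eqref{eqn:mapping-cone} uses the identity $\psi_t=\varphi_t$ to cancel a trivial summand; this is a Gaussian-elimination step, i.e.\ part of minimizing the cone, and minimization changes a free complex only up to homotopy equivalence, hence does not affect the sheafified homology, so it is harmless to replace $\cone(\alpha)$ by its minimization. The assumption $G_{-2}=0$ — equivalently that $E$ admits a free resolution of length at most $t+1$ — is what guarantees that~\eqref{eqn:mapping-cone} is concentrated in non-negative homological degrees, so that (its minimization) is a genuine virtual resolution of $M$ in the sense used here; reading off~\eqref{eqn:mapping-cone} it then has length $t-1$.

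The conceptual content is just the mapping-cone long exact sequence together with the fact that $\Ext$ does not enlarge support, and I expect the write-up to be short. The only point deserving real care is matching the ``shifted, reindexed'' numbering of $G^*_\bullet$ with the homological degrees in $G_\bullet$ and in $\cone(\alpha)$ — so that ``$G_{-2}=0$'' lines up with ``length $\le t+1$'' and the identification of the connecting map with $\alpha$ is unambiguous — and confirming that the cone indeed lives in non-negative degrees under that hypothesis.
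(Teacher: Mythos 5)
Your argument is correct and is essentially the paper's proof: both rest on the exact triangle $G_\bullet \to F_\bullet \to \cone(\alpha) \to G_\bullet[1]$ and the vanishing $H_i(G_\bullet)^\sim = 0$, which forces the sheafified homology of the cone to agree with that of $F_\bullet$. Your support argument ($\supp \Ext^j_S(E,S) \subseteq \supp E \subseteq V(B)$) simply makes explicit the justification the paper leaves implicit, and your remarks on $G_{-2}=0$ and the $\psi_t=\varphi_t$ cancellation match the paper's bookkeeping.
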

\begin{proof}
There is an exact triangle $G_\bullet \xrightarrow{\ \alpha\ }F_\bullet \to \cone(\alpha)\to G_\bullet[1]$, which induces the  long exact sequence in homology 
\[
\cdots\to H_{i+1}(\cone(\alpha))\to H_i(G)\to H_i(F)\to H_i(\cone(\alpha))\to\cdots.
\]
Since $H_i(G)^\sim = 0$ for all $i$, it follows that the homology modules of $\cone(\alpha)$ is isomorphic to those for $F_\bullet$, and thus $\cone(\alpha)$ and its minimization are virtual resolutions of $M$. 
\end{proof}

The mapping cone construction of  Proposition~\ref{prop:mapping-cone} can be iterated as long as the hypotheses hold. 

\begin{example}
\label{ex:disjoint-lines-continued}
Referring again to Example~\ref{ex:disjoint-lines}, the mapping cone construction of Proposition~\ref{prop:mapping-cone} also yields a short resolution for $S/J$.  
Since the variety $V(J)\subset X$ is simply the disjoint union of two lines, 
$S/J$ is not arithmetically Cohen--Macaulay even though it is Cohen--Macaulay at every relevant $\Pic(X)$-graded prime ideal. The minimal free resolution of $S/J$ is
\[
0\leftarrow S\leftarrow S(-2)^4\leftarrow S(-3)^4\leftarrow S(-4)\leftarrow 0.
\]
We will take the mapping cone of the following map of chain complexes, where the top chain complex is the dual of the free resolution of $\Ext^3(S/J,S) \cong k$:
\[
\begin{tikzcd}
0&S\arrow[l] \arrow[d,"\alpha_{-1}"] &
S(-1)^4\arrow[l,"\psi_{0}"'] \arrow[d,"\alpha_{0}"]&
S(-2)^6 \arrow[l,"\psi_{1}"'] \arrow[d,"\alpha_{1}"] & 
S(-3)^4\arrow[l,"\psi_{2}"'] \arrow[d,"\alpha_{2}"] & 
S(-4)\arrow[l,"\psi_{3}"'] \arrow[d,"\alpha_{3}"] & 
\arrow[l] 0.
\\
0 &
0\arrow[l]&S\arrow[l]  & 
S(-2)^4\arrow[l,"\varphi_1"']  &
S(-3)^4\arrow[l,"\varphi_2"']  &
S(-4) \arrow[l,"\varphi_3"']&
\arrow[l] 0.
\end{tikzcd}
\]
The mapping cone yields
\[
 S^2\leftarrow 
\begin{matrix} S(-1)^4\\ \oplus\\ S(-2)^4 \end{matrix}
\leftarrow 
\begin{matrix} S(-2)^6\\ \oplus\\  S(-3)^{4}\end{matrix}
\leftarrow 
\begin{matrix} S(-3)^4\\ \oplus\\ S(-4)^{\phantom{4}} \end{matrix}
\leftarrow S(-4)\leftarrow 0,
\]
which after minimizing provides a virtual resolution of $S/J$ of length $\codim(J)$:  
\[
S^2\leftarrow S(-1)^4\leftarrow S(-2)^2\leftarrow 0.
\]
Note that this resolution can also be constructed using the techniques of sheaves over simplicial complexes of \cite{yanagawa-sheaves}. 
\end{example}

\begin{example}
\label{ex:curve-mapping-cone}
Consider the hyperelliptic curve $C$ of genus $4$ which can be embedded as a curve of bidegree $(2,8)$ in $\PP^1 \times \PP^2$ found in \cite[Example~1.4]{virtual-original}, where $S = k[x_0,x_1,y_0,y_1,y_2]$. Now $S/I$ has minimal free resolution 
  \begin{align}
\nonumber
    &S^1 
    	\xleftarrow{\,\, \varphi_1\,\,}
      \begin{matrix} 
        S(-3,-1)^1 \\[-3pt]
        \oplus     \\[-3pt]
        S(-2,-2)^1 \\[-3pt]
        \oplus     \\[-3pt]
        S(-2,-3)^2 \\[-3pt]
        \oplus     \\[-3pt]
        S(-1,-5)^3 \\[-3pt]
        \oplus     \\[-3pt]
        S(0,-8)^1
      \end{matrix} 
	\xleftarrow{\,\, \varphi_2\,\,}
      \begin{matrix} 
        S(-3,-3)^3 \\[-3pt]
        \oplus     \\[-3pt] 
        S(-2,-5)^6 \\[-3pt] 
        \oplus     \\[-3pt] 
        S(-1,-7)^1 \\[-3pt] 
        \oplus     \\[-3pt]
        S(-1,-8)^2
      \end{matrix}
	\xleftarrow{\,\, \varphi_3\,\,}
      \begin{matrix} 
        S(-3,-5)^3 \\[-3pt] 
        \oplus     \\[-3pt] 
        S(-2,-7)^2 \\[-3pt] 
        \oplus     \\[-3pt] 
        S(-2,-8)^1
      \end{matrix}
	\xleftarrow{\,\, \varphi_4\,\,}
    	S(-3,-7)^1 \gets 0 \, .
\intertext{
Note that 
$\Ext^4_S(S/I,S)$ has finite length  
and that the dual of the shifted resolution of $\Ext^4_S(S/I,S)$ is 
}
     \begin{matrix} 
        S(-1, -1)^1
      \end{matrix}
	\xleftarrow{\,\, \psi_0\,\,}
     &\begin{matrix} 
        S(-1, -3)^3\\[-3pt] 
        \oplus     \\[-3pt]
 		S(-2, -1)^2
      \end{matrix}
	\xleftarrow{\,\, \psi_1\,\,}
     \begin{matrix} 
        S(-1, -5)^3\\[-3pt] 
        \oplus     \\[-3pt]
		S(-2, -3)^6\\[-3pt] 
        \oplus     \\[-3pt]
		S(-3, -1)^1
      \end{matrix}
	\xleftarrow{\,\, \psi_2\,\,}
     \begin{matrix} 
        S(-2, -8) \\[-3pt] 
        \oplus     \\[-3pt]
		S(-1, -7)\\[-3pt] 
        \oplus     \\[-3pt]
		S(-2, -5)^6 \\[-3pt] 
        \oplus     \\[-3pt]
		S(-3, -3)^3
      \end{matrix}
	\xleftarrow{\,\, \psi_3\,\,}
     \begin{matrix} 
        S(-3, -5)^3 \\[-3pt] 
        \oplus     \\[-3pt]
		S(-2, -7)^2\\[-3pt] 
        \oplus     \\[-3pt]
		S(-2, -8)^1
      \end{matrix}
	\xleftarrow{\,\, \psi_4\,\,}
    	S(-3, -7)^1
\gets
	0\, .
\intertext{Applying Proposition~\ref{prop:mapping-cone} yields a virtual resolution for $S/I$ of the form }
\label{eqn:mapcone1}
&\begin{matrix} 
        S^1 \\[-3pt]
        \oplus     \\[-3pt]
        S(-1,-1)^1
      \end{matrix}
	\xleftarrow{\,\, \rho_1\,\,}
     \begin{matrix} 
        S(-2,-1)^1 \\[-3pt]
        \oplus     \\[-3pt]
        S(-2,-2)^1 \\[-3pt]
        \oplus     \\[-3pt]
        S(-1,-3)^3 \\[-3pt]
        \oplus     \\[-3pt]
        S(-0,-8)^1 \\[-3pt]
        \oplus     \\[-3pt]
        S(-2,-1)^1
      \end{matrix} 
	\xleftarrow{\,\, \rho_2\,\,}
     \begin{matrix} 
        S(-2,-3)^3 \\[-3pt]
        \oplus     \\[-3pt]
        S(-1,-8)^2 \\[-3pt]
        \oplus     \\[-3pt]
        S(-2,-3)^1
      \end{matrix} 
	\xleftarrow{\,\, \rho_3\,\,}
 S(-2,-8)^{1} \gets 
0.
\intertext{
However, the cokernel of $\rho_3^*$ is also irrelevant, so this procedure can be repeated in this case, so applying Proposition~\ref{prop:mapping-cone} to \eqref{eqn:mapcone1} yields the following virtual resolution for $S/I$:
}
\nonumber
     &\begin{matrix} 
        S^1 \\[-3pt]
        \oplus     \\[-3pt]
        S(0,-1)^2 \\[-3pt]
        \oplus     \\[-3pt]
        S(0,-2)^1 \\[-3pt]
        \oplus     \\[-3pt]
        S(-0,-2)^1
      \end{matrix} 
	\xleftarrow{\,\, \partial_1\,\,}
     \begin{matrix} 
        S(-1,-1)^2 \\[-3pt]
        \oplus     \\[-3pt]
        S(-1,-2)^1 \\[-3pt]
        \oplus     \\[-3pt]
        S(0,-3)^1 \\[-3pt]
        \oplus     \\[-3pt]
        S(-1,-2)^1 \\[-3pt]
        \oplus     \\[-3pt]
        S(0,-3)^1 \\[-3pt]
        \oplus		\\[-3pt]
        S(-1,-1)^1 \\[-3pt]
        \oplus		\\[-3pt]
        S(0,-3)^2
      \end{matrix} 
	\xleftarrow{\,\, \partial_2\,\,}
	S(-1,-3)^{5}\gets 0, 
\end{align}
where 
\[
\partial_1 = \begin{bmatrix}
    0&-{x}_{1}{y}_{0}-{x}_{1}{y}_{1}&{x}_{0}{y}_{0}^{2}&{y}_{1}^{2}{y}_{2}&-{x}_{1}{y}_{1}^{2}-{x}_{0}{y}_{2}^{2}&{y}_{0}{y}_{2}^{2}+{y}_{1}{y}_{2}^{2}&{x}_{0}{y}_{2}&{y}_{0}^{3}+{y}_{0}^{2}{y}_{1}&0\\
    {-{x}_{1}}&{-{x}_{0}}&0&{y}_{0}^{2}&0&{-{y}_{1}^{2}}&0&{-{y}_{2}^{2}}&0\\
    0&0&{-{x}_{1}}&0&{-{x}_{0}}&{y}_{0}+{y}_{1}&0&0&{-{y}_{2}}\\
    {x}_{0}&0&0&{y}_{2}^{2}&0&0&{-{x}_{1}}&{-{y}_{1}^{2}}&{y}_{0}^{2}
    \end{bmatrix}
\]
and 
\[
\partial_2 = \begin{bmatrix}
    {-{y}_{1}^{2}}&0&{y}_{0}^{2}&{-{y}_{2}^{2}}&0\\
    {y}_{2}^{2}&0&0&{y}_{0}^{2}&{-{y}_{1}^{2}}\\
    {y}_{0}+{y}_{1}&{-{y}_{2}}&0&0&0\\
    0&0&{x}_{1}&{x}_{0}&0\\
    0&0&{y}_{2}&0&{y}_{0}+{y}_{1}\\
    {x}_{1}&0&0&0&{x}_{0}\\
    0&{y}_{0}^{2}&{y}_{2}^{2}&{-{y}_{1}^{2}}&0\\
    {-{x}_{0}}&0&0&{x}_{1}&0\\
    0&{x}_{1}&{-{x}_{0}}&0&0
\end{bmatrix}.
\]
\end{example}

\subsection{The quotient by a virtually regular element}
\label{subsec:vreg-element}

The purpose of this subsection is to introduce the notion of a virtually regular element and to show that the quotient of a virtually Cohen--Macaulay module by a virtually regular element is again virtually Cohen--Macaulay.  We do this by the explicit construction of a virtual resolution of the appropriate length for the quotient module arising from a virtual resolution of the original module. Recall that a module $M$ is irrelevant if $\widetilde{M} = 0$.

\begin{definition}
\label{def:vreg-element}
Let $f \in S$ be homogeneous, and let $M$ be an $S$-module.   If $\Ann_M f$ is irrelevant and $\dim M = 1+\dim M/fM$, then we say that $f$ is \emph{virtually regular on $M$} or that $f$ is a \emph{virtually regular element on $M$}.
\end{definition}

It is immediate that any regular element on $M$ is virtually regular and that no element of a minimal prime of $M$ can be virtually regular.  The additional flexibility gained in considering virtually regular elements over regular elements alone is that an element of an embedded associated prime of $M$ can be virtually regular if its annihilator is sufficiently well controlled.  Notice also that if $M'$ is an $S$-module satisfying $\widetilde{M'} = \widetilde{M}$, then $f$ is virtually regular on $M$ if and only if $f$ is virtually regular on $M'$.   We will see below that quotienting by a virtually regular element preserves the virtually Cohen--Macaulay property, just as quotienting by a regular element preserves the Cohen--Macaulay property of modules.  However, unlike in the affine setting, the converse is not true (see Example \ref{ex:virtreg-noconverse}).

\begin{prop}
\label{prop:vreg-elt-quotient}
Let $S$ be the Cox ring of a smooth projective toric variety $X$, and let $M$ be a finitely generated $\Pic(X)$-graded $S$-module. 
If $M$ has a virtual resolution of length $\ell$ and $f$ is a virtually regular element on $M$, then $M/fM$ has a virtual resolution of length $\ell+1$.  In particular, if $M$ is virtually Cohen--Macaulay, then $M/fM$ is either virtually Cohen--Macaulay or irrelevant.
\end{prop}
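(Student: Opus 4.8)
The plan is to carry out the classical mapping-cone construction that builds a resolution of $M/fM$ from one of $M$, but in the virtual setting and using only that $f$ is \emph{virtually} regular. The key point is that no lifting step is needed: because $S$ is commutative, for any free $\Pic(X)$-graded complex $F_\bullet$ with differentials $\varphi_i$ the degreewise maps $\bigl(f\cdot\mathrm{id}_{F_i}\bigr)_i$ form a chain map $\varphi\colon F_\bullet(-\deg f)\to F_\bullet$, since $f$ commutes with each $\varphi_i$. On $H_0$ this induces multiplication by $f$, hence after sheafifying it induces the map $\widetilde f\colon \widetilde M(-\deg f)\to\widetilde M$ coming from multiplication by $f$ on $M$.

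First I would set $C_\bullet=\cone(\varphi)$, so $C_i=F_i\oplus F_{i-1}(-\deg f)$ and $C_\bullet$ is a free complex of length $\ell+1$. The exact triangle $F_\bullet(-\deg f)\xrightarrow{\varphi}F_\bullet\to C_\bullet\to F_\bullet(-\deg f)[1]$ yields a long exact homology sequence; applying the exact functor $\widetilde{(-)}$ and using that $\widetilde{H_i(F_\bullet)}=0$ for $i>0$ while $\widetilde{H_0(F_\bullet)}=\widetilde M$ (the defining property of a virtual resolution, which holds as well for the twist $F_\bullet(-\deg f)$ since twisting is exact), this collapses to
\[
0\to \widetilde{H_1(C_\bullet)}\to \widetilde M(-\deg f)\xrightarrow{\ \widetilde f\ }\widetilde M\to \widetilde{H_0(C_\bullet)}\to 0,\qquad \widetilde{H_i(C_\bullet)}=0 \ \text{ for } i\ge 2.
\]
Since $\widetilde{(-)}$ is exact and the module map $f\colon M(-\deg f)\to M$ has kernel $(\Ann_M f)(-\deg f)$ and cokernel $M/fM$, this identifies $\widetilde{H_1(C_\bullet)}=\widetilde{\Ann_M f}(-\deg f)$ and $\widetilde{H_0(C_\bullet)}=\widetilde{M/fM}$. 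Here is where virtual regularity enters: $\Ann_M f$ is irrelevant, so $\widetilde{H_1(C_\bullet)}=0$, and therefore $\widetilde{C_\bullet}$ is a locally free resolution of $\widetilde{M/fM}$. Thus $C_\bullet$ is a virtual resolution of $M/fM$ of length $\ell+1$.

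For the final assertion, suppose $M$ is virtually Cohen--Macaulay and take $\ell=\vdim M=\codim M$; the construction then gives $M/fM$ a virtual resolution of length $\codim M+1$. The second clause in the definition of a virtually regular element, $\dim M=1+\dim M/fM$, forces $\codim(M/fM)=\codim M+1$, so this virtual resolution has length exactly $\codim(M/fM)$. If $\widetilde{M/fM}=0$ then $M/fM$ is irrelevant; otherwise the inequality $\vdim(M/fM)\ge\codim(M/fM)$ forces $\vdim(M/fM)=\codim(M/fM)$, so $M/fM$ is virtually Cohen--Macaulay.

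The only point I expect to require care is the homology bookkeeping in the middle step: one must check that the hypothesis ``$\Ann_M f$ is irrelevant'' is precisely what kills $\widetilde{H_1(C_\bullet)}$, and that the dimension clause in the definition of a virtually regular element is what pins down $\codim(M/fM)$. Once the commutativity observation is in hand — so that the usual ``lift a chain map between resolutions'' argument is bypassed — the remainder is formal.
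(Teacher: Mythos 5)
Your proof is correct and is essentially the same as the paper's: the cone of $f\cdot\mathrm{id}\colon F_\bullet(-\deg f)\to F_\bullet$ is exactly the total complex of $F_\bullet\otimes_S\bigl(S\xrightarrow{\,f\,}S\bigr)$ used there, and your long-exact-sequence bookkeeping (with irrelevance of $\Ann_M f$ killing $\widetilde{H_1}$ and the dimension clause pinning down $\codim(M/fM)$) is the ``standard diagram chase'' the paper invokes.
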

\begin{proof}
Because $\dim M=1+\dim M/fM$, it suffices to prove the first claim.  Let $F_\bullet$ be a virtual resolution of $M$ of length $\ell$.  Consider the complex $G_\bullet = 0 \rightarrow S \xrightarrow{f} S \rightarrow S/\<f\> \rightarrow 0$.  We claim that the total complex, $E_\bullet$, of the double complex of $F_\bullet\otimes_S G_\bullet$ 
gives a virtual resolution of $M/fM$.  It is clear that $H_0(E) = M' \otimes_S S/\<f\>$ for some module $M'$ satisfying $\widetilde{M'} = \widetilde{M}$. 
Because $\Ann_M f$ is irrelevant, it follows that $H_0(E)^\sim = (M/fM)^\sim$. 
A standard diagram chase shows that the higher homology of the total complex is irrelevant.  
Because $E_\bullet$ has length $\ell+1$, we have found a virtual resolution of $M/fM$ of length $\ell+1$, as desired.
\end{proof}

\begin{definition}
We say that the sequence $f_1, \ldots, f_k$ is a \emph{virtually regular sequence} on the module $M$ if $f_1$ is virtually regular on $M$ and if $f_i$ is virtually regular on $M/ \langle f_1, \ldots, f_{i-1} \rangle M$ for all $1<i \le k$.
\end{definition}

The following corollary is a immediate from Proposition \ref{prop:vreg-elt-quotient}. 

\begin{corollary}
If the $S$-module $M$ is virtually Cohen--Macaulay, and $f_1, \ldots, f_k$ is a virtually regular sequence on $M$, then $M/ \langle f_1, \ldots, f_k \rangle M$ is either virtually Cohen--Macaulay or irrelevant.
\end{corollary}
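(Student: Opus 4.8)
The plan is to induct on $k$, using Proposition~\ref{prop:vreg-elt-quotient} as both the base case and the engine of the inductive step. Write $M_0 = M$ and, for $1 \le i \le k$, set $M_i = M/\langle f_1, \ldots, f_i\rangle M$, so that $M_i = M_{i-1}/f_i M_{i-1}$. The key fact to keep in view throughout is the precise form of the conclusion of Proposition~\ref{prop:vreg-elt-quotient}: a quotient of a virtually Cohen--Macaulay module by a virtually regular element is \emph{either} virtually Cohen--Macaulay \emph{or} irrelevant, and it is this disjunction that forces the ``or irrelevant'' clause into the statement of the corollary.

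For the base case $k = 1$ the statement is exactly Proposition~\ref{prop:vreg-elt-quotient}. For the inductive step, suppose $k > 1$ and apply Proposition~\ref{prop:vreg-elt-quotient} to $f_1$ and $M = M_0$: then $M_1$ is either irrelevant or virtually Cohen--Macaulay. If $M_1$ is irrelevant, i.e.\ $\widetilde{M_1} = 0$, then since $M_k$ is a quotient of $M_1$ and $\widetilde{(-)}$ is right exact, $\widetilde{M_k}$ is a quotient of $\widetilde{M_1} = 0$, so $M_k$ is irrelevant and we are done. If instead $M_1$ is virtually Cohen--Macaulay, I would observe that for each $2 \le i \le k$ one has $M/\langle f_1, \ldots, f_{i-1}\rangle M = M_1/\langle f_2, \ldots, f_{i-1}\rangle M_1$, so the hypothesis that $f_i$ is virtually regular on $M/\langle f_1, \ldots, f_{i-1}\rangle M$ says precisely that $f_2, \ldots, f_k$ is a virtually regular sequence on $M_1$. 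Applying the inductive hypothesis to $M_1$ and this sequence of length $k-1$ shows that $M_1/\langle f_2, \ldots, f_k\rangle M_1 = M_k$ is virtually Cohen--Macaulay or irrelevant, completing the induction.

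There is essentially no obstacle here --- this is the ``immediate'' consequence advertised in the text. The only point requiring minor care is the bookkeeping around the irrelevant branch: once some $M_i$ becomes irrelevant the conclusion is automatic because every further quotient stays irrelevant, while as long as no $M_i$ is irrelevant each application of Proposition~\ref{prop:vreg-elt-quotient} is legitimate because its hypotheses (virtual Cohen--Macaulayness of the current module together with virtual regularity of the next element on it) are exactly what the inductive set-up provides. One could equivalently phrase the argument without formal induction, simply iterating Proposition~\ref{prop:vreg-elt-quotient} along the chain $M_0, M_1, \ldots, M_k$ and stopping at the first irrelevant term, if one occurs.
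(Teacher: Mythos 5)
Your induction is correct and is exactly the argument the paper has in mind when it calls the corollary "immediate" from Proposition~\ref{prop:vreg-elt-quotient}: iterate the proposition along the chain of quotients, and note that once some intermediate quotient is irrelevant, all further quotients remain irrelevant. No gaps; this matches the paper's (implicit) proof.
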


\begin{example}
\label{ex:vreg-element}
Let $S = k[x_0,\ldots,x_5]$ be the Cox ring of $\PP^5$ 
and consider the ideal 
\[
J = \<x_0, x_1,x_2\> \cap \<x_3,x_4,x_5\>.
\]  
With $M = S/J$ and $F_\bullet$ the minimal free resolution of $M$, the construction in Proposition~\ref{prop:mapping-cone} yields a virtual resolution of $M$ of length $\codim M = 3$, which shows that $M$ is virtually Cohen--Macaulay.  
We claim that $x_2-x_5$ is a virtually regular element on $M$, that $x_1-x_4$ is a virtually regular element on $M/\langle x_2-x_5 \rangle M$, and that $x_0-x_3$ is a virtually regular element on $M/ \langle x_2-x_5, x_1-x_4 \rangle M$.  
Because $x_2-x_5$ is a regular element, it is automatically a virtually regular element.  Observe that 
\begin{align*}
\overline{M}=\frac{M}{\langle x_2-x_5 \rangle M} &\cong \frac{S}{ \langle x_0x_3,x_0x_4,x_0x_2,x_1x_3,x_1x_4,x_1x_2,x_2x_3,x_2x_4,x_2^2,x_5 \rangle}\\
& \cong  \frac{S}{\langle x_0,x_1,x_2,x_5 \rangle \cap \langle x_2,x_3,x_4,x_5 \rangle \cap \langle x_0,x_1,x_2^2,x_3,x_4,x_5 \rangle }.
\end{align*} 
Now $x_1-x_4$ is not a regular element on $\overline{M}$, but it is not in either minimal prime of $\overline{M}$, and so $\dim \overline{M} = 1+\dim \overline{M}/ \langle x_1-x_4 \rangle \overline{M}$.  
The isomorphism presented above is given by $x_i \mapsto x_i$ for $i \neq 5$ and $x_5 \mapsto x_2-x_5$.  After application of this isomorphism, it is easy to see that $\Ann_{\overline{M}} \langle x_1-x_4 \rangle = \langle x_2 \rangle \overline{M}$, which is irrelevant.  Hence, $x_1-x_4$ is a virtually regular element that is not a regular element on $\overline{M}$.  A similar computation shows that $x_0-x_3$ is in an embedded prime of $M/\langle x_2-x_5, x_1-x_4 \rangle M$ and has, after applying an analogous isomorphism to the one described above, an irrelevant annihilator generated by $x_1$ and $x_2$.  Hence, $x_2-x_5, x_1-x_4,x_0-x_3$ is a virtually regular sequence on $M$.

Therefore, since $M$ is virtually Cohen--Macaulay, so are each of the modules $M/\langle x_2-x_5 \rangle M$ and $M/ \langle x_2-x_5, x_1-x_4 \rangle M$ by Proposition~\ref{prop:vreg-elt-quotient}.  Notice that $M/ \langle x_2-x_5, x_1-x_4,x_0-x_3 \rangle M$ is irrelevant.
\end{example}

It is worth noting, however, that the converse to Proposition~\ref{prop:vreg-elt-quotient} is false, even over the Cox ring of a single projective space, as seen in the example below.

\begin{example}\label{ex:virtreg-noconverse}
If $S = k[x_0,x_1,x_2]$ is the Cox ring of $\PP^2$, and $M = S/\langle x_0^2,x_0x_1 \rangle$, then there is no $f \in S$ so that $S/\< f \>^\sim \cong M^\sim$. Thus, the virtual dimension of $M$ is at least of length $2$ while $\codim M=1$, so $M$ is not virtually Cohen--Macaulay. 
However, $x_2$ is a (virtually) regular element on $M$, and $M/\<x_2\>M^\sim \cong S/\<x_0,x_2\>^\sim$, which is clearly virtually Cohen--Macaulay.  
Hence, we have an example of a module that is not virtually Cohen--Macaulay and a virtually regular on it so that the quotient by that virtually regular element yields virtually Cohen--Macaulay module.  Notice also that $x_2, x_1,x_0$ is a virtually sequence on $M$ while $x_0,x_1,x_2$ is not, which shows that virtually regular sequences need not be permutable. Additionally, because $S/\langle x_0 \rangle$ is virtually Cohen--Macaulay, this example shows that it is possible that a monomial ideal $I$ does not define a virtually Cohen--Macaulay scheme while its radical $\sqrt{I}$ does.  
\end{example}

\section{Derived functors via virtual resolutions}
\label{sec:virtual-ext}

In this section, we record that the sheafifications of $\Ext$ and $\Tor$ functors can be computed using virtual resolutions in place of free resolutions.  This perspective gives necessary conditions on a module for it to be virtually Cohen--Macaulay. 
The goals of this treatment are to record the relationship between the length of shortest virtual resolutions with homological dimension and to  note some conditions on modules that prevent them from being virtually Cohen--Macaulay. These conditions are those that would prevent a module from having any virtual resolution of length equal to its codimension.

As in the previous section, $X$ will always denote an arbitrary smooth projective toric variety with Cox ring $S$ and that all $S$-modules will be finitely generated and $\Pic(X)$-graded.  Nevertheless, we will endeavor to repeat these hypotheses in the statements of theorems.

\begin{prop}\label{prop:vExt}
Let $M$ and $N$ be finitely generated $\Pic(X)$-graded modules over the Cox ring $S$ of a smooth projective toric variety $X$. 
If $F_\bullet$ is any virtual resolution of $M$, then $\Ext^i_S(M,N)^\sim$ is the sheafification of the $i^{th}$ homology module of $Hom_S(F_\bullet,N)$.  
\end{prop}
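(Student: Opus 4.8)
The plan is to show that both $\Ext^i_S(M,N)^\sim$ and $H^i(\Hom_S(F_\bullet,N))^\sim$ are naturally isomorphic to the sheaf $\mathcal{E}xt^i_{\mathcal{O}_X}(\widetilde{M},\widetilde{N})$, and then compare. I would begin by recalling two standard facts about sheafification. First, the functor $(-)^\sim$ is exact on finitely generated $\Pic(X)$-graded $S$-modules, so that for any complex $C_\bullet$ of such modules one has $H_i(C_\bullet)^\sim \cong H_i(C_\bullet^\sim)$ and, dually, $H^i(C^\bullet)^\sim \cong H^i\bigl((C^\bullet)^\sim\bigr)$. Second, for a finitely generated graded free module $G$ and any finitely generated graded $N$, there is a natural isomorphism $\Hom_S(G,N)^\sim \cong \mathcal{H}om_{\mathcal{O}_X}(\widetilde{G},\widetilde{N})$; it suffices to treat $G = S(a)$, where $\Hom_S(S(a),N) \cong N(-a)$ sheafifies to $\widetilde{N}(-a) \cong \mathcal{H}om_{\mathcal{O}_X}(\mathcal{O}_X(a),\widetilde{N}) \cong \mathcal{H}om_{\mathcal{O}_X}(\widetilde{S(a)},\widetilde{N})$, and then to take a finite direct sum. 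These isomorphisms are compatible with the differentials in a complex of free $S$-modules, so applying them termwise to the virtual resolution $F_\bullet$ produces an isomorphism of complexes of coherent sheaves $\Hom_S(F_\bullet,N)^\sim \cong \mathcal{H}om_{\mathcal{O}_X}(\widetilde{F}_\bullet,\widetilde{N})$.

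Next I would invoke the classical fact that $\mathcal{E}xt$-sheaves can be computed from any finite locally free resolution of the first argument: if $\mathcal{L}_\bullet \to \widetilde{M}$ is a resolution by locally free sheaves of finite rank, then $\mathcal{E}xt^i_{\mathcal{O}_X}(\widetilde{M},\widetilde{N}) \cong \mathcal{H}^i\bigl(\mathcal{H}om_{\mathcal{O}_X}(\mathcal{L}_\bullet,\widetilde{N})\bigr)$, since finite-rank locally free sheaves are acyclic for $\mathcal{H}om_{\mathcal{O}_X}(-,\widetilde{N})$, so that $\mathcal{H}om_{\mathcal{O}_X}(\mathcal{L}_\bullet,\widetilde{N})$ represents $R\mathcal{H}om_{\mathcal{O}_X}(\widetilde{M},\widetilde{N})$. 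By the very definition of a virtual resolution, $\widetilde{F}_\bullet$ is such a resolution of $\widetilde{M}$. Combining this with the previous paragraph and exactness of $(-)^\sim$,
\[
H^i\bigl(\Hom_S(F_\bullet,N)\bigr)^\sim \;\cong\; \mathcal{H}^i\bigl(\Hom_S(F_\bullet,N)^\sim\bigr) \;\cong\; \mathcal{H}^i\bigl(\mathcal{H}om_{\mathcal{O}_X}(\widetilde{F}_\bullet,\widetilde{N})\bigr) \;\cong\; \mathcal{E}xt^i_{\mathcal{O}_X}(\widetilde{M},\widetilde{N}).
\]
Since an honest finite free resolution of $M$ is in particular a virtual resolution of $M$, the special case of this identity applied to such a resolution gives $\Ext^i_S(M,N)^\sim \cong \mathcal{E}xt^i_{\mathcal{O}_X}(\widetilde{M},\widetilde{N})$, and combining with the display above proves the proposition.

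The one point that needs genuine care (or a precise citation) is the assertion that $\mathcal{E}xt$-sheaves are computed by a locally free resolution of the first variable; everything else is bookkeeping with the exactness of $(-)^\sim$, its compatibility with $\Hom$ out of a finitely generated free module, and the observation that genuine free resolutions are instances of virtual resolutions. If one preferred to avoid $\mathcal{E}xt$-sheaf language, an alternative route is to lift the identity on $H_0(F_\bullet)$ to a map from $F_\bullet$ into a free resolution of the module $H_0(F_\bullet)$ (which has the same sheafification as $M$, so $\widetilde{F}_\bullet$ is already a resolution of it); this map becomes a quasi-isomorphism of bounded-above complexes of finite-rank locally free sheaves after sheafifying, and applying $\mathcal{H}om_{\mathcal{O}_X}(-,\widetilde{N})$ to such a quasi-isomorphism again yields a quasi-isomorphism — but tracking that argument requires more work than the approach above, so I would present the $\mathcal{E}xt$-sheaf version.
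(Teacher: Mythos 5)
Your proof is correct, but it takes a genuinely different route from the paper. You pass to the geometric side: sheafification is exact and converts $\Hom_S$ out of a finitely generated graded free module into $\cHom_{\mathcal{O}_X}$ out of the corresponding sum of line bundles (using smoothness for $\widetilde{N(-a)}\cong\widetilde{N}(-a)$), and then you identify both $H^i(\Hom_S(F_\bullet,N))^\sim$ and $\Ext^i_S(M,N)^\sim$ with $\sExt^i_{\mathcal{O}_X}(\widetilde{M},\widetilde{N})$ via the classical fact (Hartshorne III.6.5) that sheaf Ext is computed by any finite-rank locally free resolution of the first variable, the case of $\Ext^i_S(M,N)$ being recovered because an honest free resolution is itself a virtual resolution. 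The paper instead stays entirely on the module side: it forms the double complex $\Hom_S(F_i,I^j)$ with $I^\bullet$ an injective resolution of $N$ and compares the two spectral sequences, using that the higher homology $L_i$ of $F_\bullet$ (and $L_0$ up to irrelevant pieces) is supported on $V(B)$, so the second filtration contributes $\Ext^i_S(M,N)$ modulo modules killed by sheafification. Your argument is cleaner in that it sidesteps the slightly delicate bookkeeping of $B$-supported pieces and the fact that $H_0(F_\bullet)$ need only agree with $M$ after sheafification (which the paper glosses as ``$L_0=M$''), at the cost of importing the $\sExt$-sheaf machinery and the compatibility of $(-)^\sim$ with $\Hom$ and twists; the paper's spectral-sequence proof is more self-contained and, as it notes, transfers verbatim to the $\Tor$ statement by replacing $I^\bullet$ with a free resolution of $N$, whereas your route would there use the analogous tensor-product compatibility instead. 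Either way the statement is only an isomorphism of sheaves, which is all that is claimed.
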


\begin{proof}
Fix a virtual resolution $F_\bullet$ of $M$ and an injective resolution $I^\bullet$ of $N$.  We consider the spectral sequence of the double complex whose $(i,j)^{th}$ entry on page $0$ is $\Hom_S(F_i, I^j)$.  Taking homology along each column yields a page 1 whose only nonzero entries are $\Hom_S(F_\bullet,N)$.  Hence, the module in position $(i,j)$ on page $\infty$ is the $i^{th}$ homology module of $\Hom_S(F_\bullet,N)$.  

 As usual, let $B$ denote the irrelevant ideal of $S$. On the other hand, taking homology along rows first yields a page 1 whose entry in position $(i,j)$ is $\Hom(L_i, I^j)$ where $L_i$ is the $i^{th}$ homology of $F_\bullet$, which is supported only on $B$ unless $i = 0$, in which case $L_0 = M$.  Hence, the entry in position $(i,j)$ on page $\infty$ is $H_i \oplus K_i$ for some module $K_i$ supported only on $B$ and $H_i \subseteq \Ext^i_S(M,N)$ with $\Ext^i_S(M,N)/H_i$ supported only on $B$.  Hence, $\Ext^i_S(M,N)^\sim \cong (H_i \oplus K_i)^\sim$, as desired.
\end{proof}

It is a classical result that, for a module $M$ over the Cox ring $S$ of $\PP^d$, the condition that $H^i_B(M)^\vee \cong \Ext_S^{d+1-i}(M,S)$ be irrelevant (equivalently, finite length) for all $i<\dim(M)$ is equivalent to the condition that $M$ be equidimensional and Cohen--Macaulay at its localization at every relevant prime in its support.  
We will see in Section~\ref{sec:VCMvsOthers} that virtually Cohen--Macaulay implies geometrically Cohen--Macaulay and equidimensional. 
Putting these facts together, we have that when $X$ is a single projective space,  $M$ being virtually Cohen--Macaulay implies that $\Ext_S^j(M,S)$ is irrelevant for all $j>\codim(M)$. 
The following corollary, which is immediate from Proposition~\ref{prop:vExt}, extends that result to the arbitrary smooth projective toric setting.  

\begin{corollary}
\label{cor:vanishingExt}
Let $S$ be the Cox ring of a smooth projective toric variety $X$, and let $M$ be a finitely generated $\Pic(X)$-graded $S$-module. 
If $M$ has a virtual resolution of length $\ell$, then $\Ext^i_S(M,N)^\sim = 0$ for all $\Pic(X)$-graded finitely generated $S$-modules $N$ and all $i>\ell$. \qed
\end{corollary}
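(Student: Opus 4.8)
The plan is to read this off immediately from Proposition~\ref{prop:vExt}. First I would invoke the hypothesis to fix a virtual resolution $F_\bullet$ of $M$ of length $\ell$, so that $F_i = 0$ for all $i > \ell$. Then, for an arbitrary finitely generated $\Pic(X)$-graded $S$-module $N$, the complex $\Hom_S(F_\bullet, N)$ is concentrated in cohomological degrees $0, 1, \dots, \ell$, and hence its $i$-th homology module vanishes for every $i > \ell$.

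Next I would apply Proposition~\ref{prop:vExt}, which says exactly that $\Ext^i_S(M,N)^\sim$ is the sheafification of the $i$-th homology module of $\Hom_S(F_\bullet, N)$. Since that homology module is already zero for $i > \ell$, so is its sheafification, which yields $\Ext^i_S(M,N)^\sim = 0$ for all such $i$, as claimed. The only bookkeeping point is to confirm that the standing hypotheses here ($X$ a smooth projective toric variety, all modules finitely generated and $\Pic(X)$-graded) are precisely those under which Proposition~\ref{prop:vExt} was established, which they are.

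I do not anticipate any genuine obstacle: all the substantive content is already contained in Proposition~\ref{prop:vExt}, and this corollary is simply the elementary observation that a complex of length $\ell$ has no homology in degrees exceeding $\ell$. Accordingly, I would present the argument as a single short paragraph rather than a multi-step proof.
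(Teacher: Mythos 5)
Your argument is correct and is exactly the paper's (implicit) one: the paper labels this corollary as immediate from Proposition~\ref{prop:vExt}, and your spelling out that a length-$\ell$ virtual resolution makes $\Hom_S(F_\bullet,N)$ vanish in degrees above $\ell$, so its homology and hence the sheafified $\Ext$ modules vanish, is precisely that intended one-line deduction. No gaps.
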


\begin{example}\label{ex:disjoint-lines-P4}
In Example~\ref{ex:disjoint-lines}, we saw that $I = \<x_0,x_1\> \cap \<x_2,x_3\>$ defined a virtually Cohen--Macaulay subscheme of $\PP^3$.  
Corollary~\ref{cor:vanishingExt} implies that $I$ does not define a virtually Cohen--Macaulay subscheme of $\PP^d$ whenever $d>3$. 
In particular, with $S = k[x_0,\ldots,x_d]$, we have that $\Ext_S^3(S/I,S) \cong S/\<x_0,x_1,x_2,x_3\>$, which is not irrelevant.
\end{example}

\begin{remark}
With notation as above, the virtual dimension of $M$ is greater than or equal to the homological dimension of $\widetilde{M}$.  To see this, observe that if $M$ has a virtual resolution of length $\ell$, then, because every virtual resolution of $M$ gives rise to a locally free resolution of $\widetilde{M}$, the homological dimension of $\widetilde{M}$ is at most $\ell$.  However, it is not true that a module $M$ is virtually Cohen--Macaulay if and only if its sheafification has homological dimension equal to its codimension.  For example, any module whose sheafification is a vector bundle that does not split as a direct sum of line bundles has homological dimension $0$ while its shortest virtual resolution must have positive length. 
Example~\ref{ex:tangentbundle} examines an explicit example of this type.  
\end{remark}

We have seen in Corollary~\ref{cor:vanishingExt} that an $S$-module $M$ cannot have a virtual resolution of length $\ell$ if there is some $\Ext^i_S(M,N)^\sim \neq 0$ for some $S$-module $N$ and some $i>\ell$.  
The following two corollaries combine to show that one need not consider all possible $N$ but that it is instead sufficient to check only $N=S$.

\begin{corollary}\label{cor:inductiveStep}
Let $S$ be the Cox ring of a smooth projective toric variety $X$, and let $M$ be a finitely generated $\Pic(X)$-graded $S$-module. If $\Ext_S^\ell(M,S)^\sim= 0$ and $\Ext_S^{\ell+1}(M,L)^\sim = 0$ for every finitely generated $\Pic(X)$-graded $S$-module $L$, then $\Ext_S^\ell(M,N)^\sim = 0$ for every finitely generated $\Pic(X)$-graded $S$-module $N$.
\end{corollary}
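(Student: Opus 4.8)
The plan is a dimension‑shifting (syzygy) argument that bootstraps from the case $N = S$, which is one of the hypotheses, to an arbitrary finitely generated $\Pic(X)$-graded module $N$. First I would choose a surjection $F \twoheadrightarrow N$ from a finitely generated free $\Pic(X)$-graded $S$-module $F$, and let $N'$ be its kernel, so that there is a short exact sequence $0 \to N' \to F \to N \to 0$ of finitely generated $\Pic(X)$-graded modules.

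Next I would apply $\Ext_S^\bullet(M,-)$ to this short exact sequence and extract the exact segment
\[
\Ext_S^\ell(M,F) \longrightarrow \Ext_S^\ell(M,N) \longrightarrow \Ext_S^{\ell+1}(M,N').
\]
Since sheafification is exact, applying $(-)^\sim$ preserves exactness, giving an exact sequence
\[
\Ext_S^\ell(M,F)^\sim \longrightarrow \Ext_S^\ell(M,N)^\sim \longrightarrow \Ext_S^{\ell+1}(M,N')^\sim .
\]
The right‑hand term vanishes by the second hypothesis, applied with $L = N'$. For the left‑hand term, I would write $F \cong \bigoplus_j S(a_j)$ for finitely many $a_j \in \Pic(X)$; because $\Ext$ commutes with finite direct sums in the second argument and $\Ext_S^\ell(M,S(a_j)) \cong \Ext_S^\ell(M,S)(a_j)$, sheafifying yields $\Ext_S^\ell(M,F)^\sim \cong \bigoplus_j \Ext_S^\ell(M,S)^\sim \otimes_{\cO_X} \cO_X(a_j)$, which is $0$ by the first hypothesis. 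With $\Ext_S^\ell(M,N)^\sim$ squeezed between two zero sheaves, it must vanish, which is the claim.

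Every step here is routine homological algebra, so I do not expect a genuine obstacle; the only point requiring a moment's care is that the free module $F$ carries $\Pic(X)$-twists, which is harmless since sheafifying a twist of the zero module still gives zero. The real role of this corollary is as the inductive engine which, combined with the companion reduction in the next corollary, collapses the vanishing test $\Ext_S^i(M,N)^\sim = 0$ for all $N$ down to the single test module $N = S$.
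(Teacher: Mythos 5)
Your proof is correct and follows essentially the same dimension-shifting argument as the paper: surject a free module onto $N$, apply the long exact sequence for $\Ext_S(M,-)$, sheafify, and use the two vanishing hypotheses on the flanking terms. The only (harmless) difference is that you take a graded surjection from a twisted free module $\bigoplus_j S(a_j)$, which keeps everything $\Pic(X)$-graded, whereas the paper uses an ungraded surjection $S^a \twoheadrightarrow N$; your variant is if anything slightly cleaner on the grading bookkeeping.
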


\begin{proof}
Let $N$ be an $S$-module. 
There is a short exact sequence (of $S$-modules but typically not of graded $S$-modules) for some $a \ge 1$ and some $K$ of the form 
$0 \longrightarrow K \longrightarrow S^a \longrightarrow N \longrightarrow 0$.
Applying $\Ext_S(M,-)$ to this yields
\[ 
\Ext_S^\ell(M,S^a)^\sim \longrightarrow \Ext_S^\ell(M,N)^\sim \longrightarrow
\Ext_S^{\ell+1}(M,K)^\sim = 0.
\]
Since $\Ext_S^{\ell}(M, S^a) \cong (\Ext_S^{\ell}(M,S))^a= 0$, 
it must be true that $\Ext_S^\ell(M,N)^\sim= 0$.
\end{proof}

\begin{cor}
Let $S$ be the Cox ring of a smooth projective toric variety $X$. 
Suppose the finitely generated $\Pic(X)$-graded $S$-module $M$ has the property that $\Ext_S^i(M,S)$ is irrelevant for all $i \ge \ell$, for some $\ell \ge 0$. 
Then $\Ext_S^i(M,N)$ is irrelevant for every finitely generated $\Pic(X)$-graded $S$-module $N$ for all $i \ge \ell$.
\end{cor}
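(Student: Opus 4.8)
The plan is to argue by descending induction on $i$, with Corollary~\ref{cor:inductiveStep} supplying the inductive step. First I would record the identification that makes the two statements match up: for a finitely generated $\Pic(X)$-graded $S$-module $P$, being \emph{irrelevant} means exactly $\widetilde{P} = 0$, i.e. $P^\sim = 0$, which is the hypothesis/conclusion format of Corollary~\ref{cor:inductiveStep}. So the goal is to show $\Ext_S^i(M,N)^\sim = 0$ for every finitely generated $\Pic(X)$-graded $S$-module $N$ and every $i \ge \ell$.

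For the base case I would use that $S$ is a polynomial ring, hence of finite global dimension $n$ equal to its number of variables; thus $M$ admits a free resolution of length at most $n$, so $\Ext_S^i(M,N) = 0$ (in particular irrelevant) for every $S$-module $N$ and every $i > n$. This anchors the descending induction. For the inductive step, fix $i$ with $\ell \le i \le n$ and assume $\Ext_S^{i+1}(M,L)^\sim = 0$ for every finitely generated $\Pic(X)$-graded $S$-module $L$. Since $i \ge \ell$, the hypothesis gives that $\Ext_S^i(M,S)$ is irrelevant, i.e. $\Ext_S^i(M,S)^\sim = 0$. Applying Corollary~\ref{cor:inductiveStep} with the index $\ell$ there taken to be the current $i$, I conclude $\Ext_S^i(M,N)^\sim = 0$ for every finitely generated $\Pic(X)$-graded $S$-module $N$. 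Descending from $i = n$ down to $i = \ell$ finishes the argument.

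I do not expect a genuine obstacle here: all the real content is in Corollary~\ref{cor:inductiveStep} (which in turn rests on Proposition~\ref{prop:vExt} and the spectral-sequence bookkeeping). The only things to verify are that the descending induction is well-founded, which holds because $\pdim_S M \le n < \infty$, and that at each stage $i \ge \ell$ the hypothesis indeed furnishes the vanishing of $\Ext_S^i(M,S)^\sim$ needed to invoke the corollary. If one wants to avoid invoking finite global dimension of $S$ explicitly, one can instead take the base case to be any $i$ exceeding $\pdim_S M$, which is finite since $M$ is finitely generated over the Noetherian regular ring $S$.
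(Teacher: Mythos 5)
Your proposal is correct and follows essentially the same route as the paper: the paper also anchors the induction above $\pdim_S M$ (where the claim is trivial since free resolutions exist of that length) and then descends using Corollary~\ref{cor:inductiveStep}. Your write-up just makes the descending induction and the base case via finite global dimension explicit.
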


\begin{proof}
Because free resolutions are virtual resolutions, the claim is trivial if $\ell$ is greater than the projective dimension of $M$, denoted $\pdim M$, so we assume that $\ell \le \pdim M$.   Now, using Corollary~\ref{cor:inductiveStep}, we proceed by induction on $\pdim M-\ell$.
\end{proof}

For completeness, we state an analogue for $\Tor$ of Proposition~\ref{prop:vExt}, which concerned $\Ext$.

\begin{prop}
\label{prop:vTor}
Let $M$ and $N$ be finitely generated $\Pic(X)$-graded modules over the Cox ring $S$ of a smooth projective toric variety $X$.  If  $F_\bullet$ is any virtual resolution of $M$, then $\Tor^S_i(M,N)^\sim$ is the sheafification of the $i^{th}$ homology module of $F_\bullet \otimes_S N$.
\end{prop}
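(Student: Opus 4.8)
The plan is to run the same double-complex spectral sequence argument that proves Proposition~\ref{prop:vExt}, with $\otimes_S$ and a free resolution of $N$ in place of $\Hom_S(-,-)$ and an injective resolution of $N$. So I would fix the given virtual resolution $F_\bullet$ of $M$ together with a free resolution $P_\bullet$ of $N$, form the first-quadrant double complex with $(i,j)$ entry $F_i\otimes_S P_j$, and let $E_\bullet$ be its total complex; the two spectral sequences of this double complex both converge to $H_\bullet(E_\bullet)$.

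Running the spectral sequence that takes homology first in the $P$-direction: each $F_i$ is free, hence flat, so $F_i\otimes_S-$ is exact and the column $F_i\otimes_S P_\bullet$ has homology $F_i\otimes_S N$ concentrated in degree $0$. Thus this spectral sequence collapses on the first page to the complex $F_\bullet\otimes_S N$, and on the next page yields an honest isomorphism $H_n(E_\bullet)\cong H_n(F_\bullet\otimes_S N)$ of $S$-modules.

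Running the other spectral sequence, which takes homology first in the $F$-direction: each $P_j$ is free, so $-\otimes_S P_j$ is exact, giving $H_i(F_\bullet\otimes_S P_j)=H_i(F_\bullet)\otimes_S P_j$. Since $F_\bullet$ is a virtual resolution, $H_0(F_\bullet)$ is a module $M'$ with $\widetilde{M'}=\widetilde{M}$, while $H_i(F_\bullet)$ for $i>0$ is supported on $V(B)$; the second-page entry at $(i,j)$ is then $\Tor^S_j(H_i(F_\bullet),N)$, which equals $\Tor^S_j(M',N)$ in the row $i=0$ and is supported on $V(B)$ in the rows $i>0$. Here I would invoke exactness of sheafification: applying $(-)^\sim$ termwise produces a spectral sequence with the same differentials, still converging to $H_\bullet(E_\bullet)^\sim$, but now concentrated in the single row $i=0$, where it reads $\Tor^S_j(M',N)^\sim$. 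It therefore degenerates, and combining with the first run gives $H_n(F_\bullet\otimes_S N)^\sim\cong\Tor^S_n(M',N)^\sim$. Finally, $\Tor^S_n(M',N)^\sim\cong\mathcal{T}or_n^{\cO_X}(\widetilde{M'},\widetilde{N})=\mathcal{T}or_n^{\cO_X}(\widetilde{M},\widetilde{N})\cong\Tor^S_n(M,N)^\sim$, because sheafification is exact and monoidal (one computes $\mathcal{T}or$ on $X$ from the sheafification of a free resolution), which completes the argument.

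There is no genuine obstacle here beyond the one subtlety already present in Proposition~\ref{prop:vExt}: a virtual resolution recovers $M$ only up to the $B$-torsion discrepancy $M'$, so the identification of the $\Tor$ modules can be made only after passing to sheafifications, not at the level of $S$-modules. A slicker, essentially equivalent route would be to sheafify $F_\bullet$ to a locally free — hence flat — resolution $\widetilde{F}_\bullet$ of $\widetilde{M}$ and observe directly that $\widetilde{F}_\bullet\otimes_{\cO_X}\widetilde{N}$ computes $\mathcal{T}or^{\cO_X}_\bullet(\widetilde{M},\widetilde{N})=\Tor^S_\bullet(M,N)^\sim$, using $(F_\bullet\otimes_S N)^\sim\cong\widetilde{F}_\bullet\otimes_{\cO_X}\widetilde{N}$ from exactness and monoidality of $(-)^\sim$.
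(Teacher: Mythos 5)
Your argument is correct and is essentially the paper's own proof: the paper likewise reduces Proposition~\ref{prop:vTor} to the two spectral sequences of the double complex $F_\bullet \otimes_S G_\bullet$ with $G_\bullet$ a free resolution of $N$, exactly mirroring the proof of Proposition~\ref{prop:vExt}. Your additional care in passing from $H_0(F_\bullet)=M'$ to $M$ via $\Tor^S_i(M',N)^\sim\cong\Tor^S_i(M,N)^\sim$ (through sheaf $\mathcal{T}\!or$ over $\cO_X$) only makes explicit a point the paper leaves implicit, so no further changes are needed.
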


\begin{proof}
The argument follows the proof of  Proposition~\ref{prop:vExt} but uses the spectral sequence arising from the double complex $F_\bullet \otimes_S G_\bullet$, where $G_\bullet$ is a free resolution of $N$.
\end{proof}

\begin{corollary}
\label{cor:vanishingTor}
Let $S$ be the Cox ring of a smooth projective toric variety $X$. 
If a finitely generated $\Pic(X)$-graded $S$-module $M$ has a virtual resolution of length $\ell$, then $\Tor_i^S(M,N)^\sim = 0$ for all finitely generated $\Pic(X)$-graded $S$-modules $N$ and all $i>\ell$. \qed
\end{corollary}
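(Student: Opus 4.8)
The plan is to deduce the statement directly from Proposition~\ref{prop:vTor}, exactly as Corollary~\ref{cor:vanishingExt} is deduced from Proposition~\ref{prop:vExt}. First I would fix a virtual resolution $F_\bullet$ of $M$ of length $\ell$, so that $F_i = 0$ for every $i > \ell$. Then I would apply Proposition~\ref{prop:vTor} with this particular choice of $F_\bullet$: it tells us that for any finitely generated $\Pic(X)$-graded $S$-module $N$ and every $i$, the sheaf $\Tor_i^S(M,N)^\sim$ is the sheafification of the $i^{th}$ homology module of the complex $F_\bullet \otimes_S N$.

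The point is then purely formal. Since $F_i \otimes_S N = 0$ whenever $i > \ell$, the complex $F_\bullet \otimes_S N$ is concentrated in homological degrees $0$ through $\ell$, so its homology vanishes in all degrees $i > \ell$; sheafifying gives $\Tor_i^S(M,N)^\sim = 0$ for all such $i$. No genericity or finiteness hypothesis on $N$ beyond what is already assumed is needed, and the conclusion holds simultaneously for all $N$.

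There is essentially no obstacle to overcome here, which is why the statement is recorded with a \qed rather than a separate argument. The only conceptual subtlety — that a virtual resolution need not be a free (or even acyclic) resolution of $M$, so the usual recipe for computing $\Tor$ from $F_\bullet$ fails on the nose — is precisely what Proposition~\ref{prop:vTor} is designed to circumvent, by routing through the double complex $F_\bullet \otimes_S G_\bullet$ with $G_\bullet$ a free resolution of $N$ and comparing the two spectral sequence filtrations up to sheafification. Once that proposition is in hand, the corollary is immediate.
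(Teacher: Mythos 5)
Your argument is correct and is exactly the reasoning the paper intends: the corollary is marked \qed because it follows immediately from Proposition~\ref{prop:vTor}, since a virtual resolution of length $\ell$ gives a complex $F_\bullet \otimes_S N$ concentrated in degrees $0$ through $\ell$, whose homology (and hence its sheafification) vanishes for $i>\ell$. Your write-up simply makes explicit what the paper leaves implicit, so there is nothing to correct.
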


Connecting back to Definition~\ref{def:vreg-element}, a virtually regular element has a description in terms of virtual $\Tor$, just as, in the affine case, a regular element can be described in terms of the vanishing of certain $\Tor$ modules.

\begin{prop}
Let $S$ be the Cox ring of a smooth projective toric variety $X$, 
$M$ be a finitely generated $\Pic(X)$-graded $S$-module, and $f\in S$ be homogeneous. 
Then $f$ is virtually regular on $M$ (as in Definition~\ref{def:vreg-element}) if and only if $1+\dim M/fM = \dim M$ and 
$\Tor_1^S(M,S/\<f\>)^\sim = 0$.  
\end{prop}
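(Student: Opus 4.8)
The plan is to unwind both conditions in Definition~\ref{def:vreg-element} and match them against the two conditions in the statement, the dimension condition being literally identical in both. So the real content is to show that, given $1+\dim M/fM = \dim M$, the condition $\Ann_M f$ is irrelevant is equivalent to $\Tor_1^S(M,S/\<f\>)^\sim = 0$. First I would write down the Koszul-type complex $G_\bullet = [0 \to S \xrightarrow{\ f\ } S \to 0]$ resolving $S/\<f\>$, where I place $S$ in homological degrees $1$ and $0$. Tensoring with $M$ gives the complex $0 \to M \xrightarrow{\ f\ } M \to 0$, whose homology computes $\Tor_\bullet^S(M, S/\<f\>)$: in particular $\Tor_1^S(M,S/\<f\>) = \ker(M \xrightarrow{f} M) = \Ann_M f$ and $\Tor_0^S(M,S/\<f\>) = M/fM$. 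This is an honest isomorphism of $S$-modules (no sheafification needed), so $\Tor_1^S(M,S/\<f\>)^\sim = (\Ann_M f)^\sim$.

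From here the equivalence is immediate from the definition of ``irrelevant'': a module $N$ is irrelevant precisely when $\widetilde N = 0$, i.e., when $N^\sim = 0$. Thus $\Ann_M f$ is irrelevant if and only if $(\Ann_M f)^\sim = 0$ if and only if $\Tor_1^S(M,S/\<f\>)^\sim = 0$. Combining this with the (unchanged) dimension hypothesis $1+\dim M/fM = \dim M$ gives exactly the biconditional in the statement: $f$ is virtually regular on $M$ $\iff$ $\dim M = 1 + \dim M/fM$ and $\Tor_1^S(M,S/\<f\>)^\sim = 0$.

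I anticipate no serious obstacle here; the statement is essentially a translation exercise, the analogue of the classical fact that $f$ is a nonzerodivisor on $M$ iff $\Tor_1^S(M,S/\<f\>) = 0$ (when $f$ is already known not to be a unit, which here is subsumed into the dimension condition). The one point that warrants a sentence of care is that one should note $\Tor$ is computed by resolving \emph{either} argument, so it is legitimate to use the length-one free resolution $G_\bullet$ of $S/\<f\>$ rather than a free resolution of $M$; alternatively one can invoke Proposition~\ref{prop:vTor} directly with $F_\bullet$ any virtual resolution of $M$, though the self-contained computation via $G_\bullet$ is cleaner and requires less machinery. I would also remark in passing that since $\Ann_M f \cong \Ann_{M'} f$ whenever $\widetilde{M'} = \widetilde M$ (as already observed after Definition~\ref{def:vreg-element}), both sides of the equivalence depend only on $\widetilde M$, which is consistent.
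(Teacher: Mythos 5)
Your proof is correct and follows essentially the same route as the paper: the paper tensors the short exact sequence $0 \to S \xrightarrow{f} S \to S/\langle f\rangle \to 0$ with $M$ to get the honest module isomorphism $\Tor_1^S(M,S/\langle f\rangle) \cong \Ann_M f$, which is exactly your computation via the length-one free resolution of $S/\langle f\rangle$. The translation of irrelevance into vanishing of the sheafified $\Tor$ and the matching of the (identical) dimension conditions are then immediate, as you say.
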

\begin{proof}
Tensor the short exact sequence 
$0 \rightarrow S \xrightarrow{f} S \rightarrow S/\<f\> \rightarrow 0$  
with $M$ to see that there is an isomorphism of $S$-modules $\Tor^S_1(M,S/\<f\>) \cong \Ann_M f$.
\end{proof}

\section{Relationships among the arithmetically, virtually, and geometrically Cohen--Macaulay properties}
\label{sec:VCMvsOthers}

As in the previous section, $X$ will always denote an arbitrary smooth projective toric variety with Cox ring $S$ and irrelevant ideal $B$ and that all $S$-modules will be finitely generated and $\Pic(X)$-graded.  We begin this section by recording a relationship between the arithmetically Cohen--Macaulay, virtually Cohen--Macaulay, and geometrically Cohen--Macaulay properties.  Recall that an ideal $I$ is $\emph{relevant}$ if $B^t \not\subseteq I$ for all $t \geq 1$.

\begin{defn}\label{def:geomCM}
An $S$-module $M$ is \emph{geometrically Cohen--Macaulay} if $M_P$ is a Cohen--Macaulay $S_P$-module for all relevant primes $P$ in the support of $M$.
\end{defn}

 Note that the condition that $M$ be geometrically Cohen--Macaulay is equivalent to the condition that $\widetilde{M}$ be a Cohen--Macaulay sheaf on $X$.

\begin{prop}\label{prop:aCM=>vCM=>gCM}
Let $S$ be the Cox ring of a smooth projective toric variety $X$.  If $M$ is a finitely generated $\Pic(X)$-graded $S$-module, then \begin{enumerate}
\vspace{-2mm}
\item if $M$ is arithmetically Cohen--Macaulay, then $M$ is virtually Cohen--Macaulay; and 
\item if $M$ is virtually Cohen--Macaulay, then $M$ is geometrically Cohen--Macaulay.
\end{enumerate}
\end{prop}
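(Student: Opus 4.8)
The plan is to prove both implications separately, each by exhibiting the right resolution-theoretic or local statement and then invoking results already established in the excerpt.

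For part (1), suppose $M$ is arithmetically Cohen--Macaulay, meaning $\depth M = \dim M$ over $S$ (localized at the irrelevant ideal, or equivalently that $M$ has a free resolution of length $\pdim M = \codim M$ by the Auslander--Buchsbaum formula, since $S$ is a polynomial ring). First I would recall that the minimal free resolution of $M$ then has length exactly $\codim M$. Because every free resolution is in particular a virtual resolution (sheafification of a resolution is a locally free resolution), this exhibits a virtual resolution of $M$ of length $\codim M$. On the other hand, $\vdim M \ge \codim M$ by \cite[Proposition~2.5]{virtual-original} (in the product-of-projective-spaces case; for a general smooth projective toric $X$ one uses that a virtual resolution sheafifies to a locally free resolution of $\widetilde M$, whose length is bounded below by the codimension of the support). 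Hence $\vdim M = \codim M$, and since $M$ arithmetically Cohen--Macaulay forces $\widetilde M \neq 0$ (as $\codim M < \infty$ and $M$ is not supported only on $B$ — or one simply assumes $\widetilde M \neq 0$ as part of the hypothesis), $M$ is virtually Cohen--Macaulay.

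For part (2), suppose $M$ is virtually Cohen--Macaulay, so there is a virtual resolution $F_\bullet$ of $M$ of length $\ell = \codim M$. Sheafifying, $\widetilde{F}_\bullet$ is a locally free resolution of $\widetilde M$ of length $\ell$. Now fix a relevant prime $P$ in the support of $M$; equivalently, a point of $X$ in the support of $\widetilde M$. Localizing the locally free resolution at $P$ gives a free resolution of $M_P$ over $S_P$ of length at most $\ell = \codim M$. By the Auslander--Buchsbaum formula, $\depth M_P \ge \dim S_P - \ell$. Combining this with $\dim M_P = \dim S_P - \operatorname{ht}_P M \ge \dim S_P - \codim M = \dim S_P - \ell$ — here using that the codimension of $M$ as computed at $P$ is at most the global codimension — one gets $\depth M_P \ge \dim M_P$, and the reverse inequality is automatic, so $M_P$ is Cohen--Macaulay. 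Since $P$ was an arbitrary relevant prime in the support, $M$ is geometrically Cohen--Macaulay; equivalently, $\widetilde M$ is a Cohen--Macaulay sheaf on $X$.

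The main obstacle is bookkeeping with codimension: one must be careful that "codimension of $M$" (the global invariant appearing in the definition of virtually Cohen--Macaulay) controls the codimension of $M_P$ for each relevant $P$, which is where equidimensionality-type subtleties could intrude. The cleanest route is probably to argue purely sheaf-theoretically in part (2): a length-$\ell$ locally free resolution of $\widetilde M$ shows $\widetilde M$ has homological dimension $\le \ell$ at every point, and then invoke the local Auslander--Buchsbaum statement at each point of $\operatorname{Supp}\widetilde M$, comparing with the local dimension of $\widetilde M$ at that point, which is at most $\dim X - (\dim X - \codim M) = \codim M$ since every component of $\operatorname{Supp}\widetilde M$ has codimension at least $\codim M$. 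I would present part (2) in that language to sidestep the primary-decomposition subtleties, and keep part (1) as the short free-resolution-is-virtual argument.
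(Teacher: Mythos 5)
Your overall route is the same as the paper's: for (1), observe that an arithmetically Cohen--Macaulay module has a free resolution of length $\codim M$ by Auslander--Buchsbaum and that free resolutions are virtual resolutions; for (2), localize a virtual resolution of length $\codim M$ at a relevant prime $P$ in the support to get a free $S_P$-resolution of $M_P$ and apply Auslander--Buchsbaum over the regular local ring $S_P$. So the skeleton matches.

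However, the codimension bookkeeping in your main write-up of (2) goes in the wrong direction, and as written the conclusion does not follow. You claim ``the codimension of $M$ as computed at $P$ is at most the global codimension,'' i.e.\ $\dim M_P \ge \dim S_P - \codim M$; combined with $\operatorname{depth} M_P \ge \dim S_P - \ell$ this gives two lower bounds and yields no comparison between $\operatorname{depth} M_P$ and $\dim M_P$. Moreover the claimed fact is false in general: localizing at a prime in the support can only \emph{increase} the codimension of the support (primes between $\Ann_S M$ and $P$ form a subset of all primes containing $\Ann_S M$), so the true and needed inequality is $\codim M_P \ge \codim M$, equivalently $\dim M_P \le \dim S_P - \codim M$. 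With that inequality, Auslander--Buchsbaum gives $\operatorname{depth} M_P = \dim S_P - \pdim M_P \ge \dim S_P - \codim M \ge \dim M_P$, which is exactly the paper's argument (the paper phrases it as an equality of codimensions, but only this inequality is used). Your closing paragraph does identify the correct underlying fact---every component of $\operatorname{Supp}\widetilde M$ has codimension at least $\codim M$---but the accompanying formula ``local dimension at most $\dim X - (\dim X - \codim M) = \codim M$'' is garbled: the local dimension of the support is at most $\dim S_P - \codim M$, not $\codim M$. Finally, a small point on (1): arithmetic Cohen--Macaulayness does not by itself force $\widetilde M \neq 0$ (e.g.\ $M = S/\mathfrak m$ is Cohen--Macaulay with $\widetilde M = 0$), so the relevance hypothesis really must be assumed, as you note in your hedge; with these corrections your argument coincides with the paper's proof.
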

\begin{proof}
If $M$ is arithmetically Cohen--Macaulay, then by the Auslander--Buchsbaum formula, $M$ has a free resolution of length $\codim M$. 
Because free resolutions are virtual resolutions, $M$ is thus virtually Cohen--Macaulay.  

Similarly, if $M$ is virtually Cohen--Macaulay, then it has a virtual resolution $F_\bullet$ of length $\codim M$. 
If $P$ is any relevant prime in the support of $M$, then localizing $F_\bullet$ at $P$ gives a free resolution $(F_P)_\bullet$ of $M_P$ of length $\codim M$.  Because the codimension of $\Spec(S/\Ann_S(M))$ in $\Spec(S)$ is equal to the codimension of $\Spec(S_P/\Ann_{S_P}(M_P))$ in $\Spec(S_P)$, it follows from the Auslander--Buchsbaum formula that $M_P$ is Cohen--Macaulay. 
Hence $M$ is geometrically Cohen--Macaulay.
\end{proof}

We saw several times in Section~\ref{sec:triangles}, for example in Example~\ref{ex:disjoint-lines}, that implication (1) of Proposition~\ref{prop:aCM=>vCM=>gCM} is strict.  
We now give an example showing that implication (2) is also strict.

\begin{example}
\label{ex:tangentbundle}
Even over the Cox ring of projective space, a module can be geometrically Cohen--Macaulay but not virtually Cohen--Macaulay. 
For example, if $S$ is the Cox ring of $\PP^d$ with $d > 1$ and $M$ corresponds to the tangent bundle, i.e., $M$ is the cokernel of the map 
\vspace{-1mm}
\[
S^{d+1} \xleftarrow{ 
	\begin{bmatrix}
      x_0 \\
      \vdots \\
      x_d
      \end{bmatrix}
      } S \leftarrow 0, 
\vspace{-1mm}
\] 
then $\widetilde{M}$ is a vector bundle that does not split as a direct sum of line bundles, see~\cite[Theorem~8.1.6]{cox-little-schenck}. 
Thus $M$ has virtual dimension $1$ but codimension $0$. 

Meanwhile, for each $0 \le i \le d$, the matrix above has a unit entry after tensoring with $S[1/x_i]$, which shows that $M[1/x_i] \cong S[1/x_i]$, and so $M$ is a geometrically Cohen--Macaulay.  In fact, not only is $M$ geometrically Cohen--Macaulay, but also it is a faithful module of depth $d$ on the homogeneous maximal ideal of $S$. 
These properties show that the virtual Cohen--Macaulay property is not captured by the geometric Cohen--Macaulay property along with depth information coming from the affine setting.  
\end{example}

It was shown in \cite[Proposition 5.1]{virtual-original} that every $B$-saturated virtually Cohen--Macaulay module over the Cox ring $S$ of a product of projective spaces is unmixed.  The argument, which we record here, generalizes to arbitrary smooth projective toric varieties. 

\begin{prop}
\label{prop:equidim}
Let $S$ be the Cox ring of a smooth projective toric variety $X$ with irrelevant ideal $B$. 
If $M$ is a finitely generated $\Pic(X)$-graded $B$-saturated $S$-module that is virtually Cohen--Macaulay, then $\dim S/P = \dim M$ for all associated primes $P$ of $M$. 
\end{prop}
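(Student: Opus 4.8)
The plan is to reprise the localization argument of \cite[Proposition~5.1]{virtual-original}, which runs through the Auslander--Buchsbaum formula, and to observe that it uses nothing specific to products of projective spaces. First I would record two preliminaries. Since $M$ is $B$-saturated, the $B$-torsion submodule $H^0_B(M)$ vanishes, so $M$ has no nonzero submodule supported on $V(B)$; consequently no associated prime of $M$ contains $B$, i.e., every $P\in\Ass_S M$ is relevant. Second, since $M$ is virtually Cohen--Macaulay it admits a virtual resolution $F_\bullet$ of length $c\coloneq\codim M$.

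Next I would localize $F_\bullet$ at an arbitrary relevant prime $P$. For $i\ge 1$ the homology $H_i(F_\bullet)$ sheafifies to $0$ because $\widetilde{F_\bullet}$ is a locally free resolution of $\widetilde{M}$ and sheafification commutes with homology; hence $H_i(F_\bullet)$ is supported on $V(B)$ and vanishes after localizing at $P$. Likewise $H_0(F_\bullet)$ sheafifies to $\widetilde{M}$, and since sheafification agrees with localization away from $V(B)$ we get $H_0(F_\bullet)_P\cong M_P$. Thus $(F_\bullet)_P$ is a finite free resolution of $M_P$ over the regular local ring $S_P$, so $\pdim_{S_P} M_P\le c$.

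Now take any $P\in\Ass_S M$; by the first step $P$ is relevant, so the previous step applies. Because $PS_P\in\Ass_{S_P} M_P$ we have $\operatorname{depth}_{S_P} M_P=0$, and the Auslander--Buchsbaum formula over the Cohen--Macaulay local ring $S_P$ gives
\[
\operatorname{ht} P \;=\; \dim S_P \;=\; \operatorname{depth} S_P \;=\; \operatorname{depth}_{S_P} M_P + \pdim_{S_P} M_P \;=\; \pdim_{S_P} M_P \;\le\; c = \codim M .
\]
On the other hand $\Ann_S M\subseteq P$ forces $\operatorname{ht} P\ge \operatorname{ht}\Ann_S M=\codim M$. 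Hence $\operatorname{ht} P=\codim M$ for every $P\in\Ass_S M$. Since $S$ is a polynomial ring over a field, it is catenary and equidimensional, so $\dim S/P=\dim S-\operatorname{ht} P=\dim S-\codim M=\dim M$, as claimed.

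The argument is essentially formal; the one step demanding care is the compatibility used in the second paragraph — namely that a virtual resolution of $M$ restricts to an honest finite free resolution of $M_P$ at every relevant prime $P$ — which is precisely where the definition of virtual resolution (and the identification of the relevant locus of $\Spec S$ with the affine cone over $X$ away from $V(B)$) enters; everything else is bookkeeping with heights and the Auslander--Buchsbaum formula.
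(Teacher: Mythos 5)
Your proof is correct and follows essentially the same route as the paper's: since $B$-saturation makes every associated prime relevant, you localize the length-$\codim M$ virtual resolution at such a prime to get a finite free resolution over $S_P$, and Auslander--Buchsbaum together with $\operatorname{depth}_{S_P}M_P=0$ forces $\operatorname{ht}P=\codim M$. The only difference is cosmetic (direct argument versus the paper's contradiction), and you usefully spell out why the localized complex resolves $M_P$, a point the paper leaves implicit.
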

\begin{proof}
Suppose that $M$ is a virtually Cohen--Macaulay module of codimension $c$, and suppose that there is some associated prime $P$ of $M$ of codimension $e>c$. 
Let $F_{\bullet}$ be a virtual resolution of length $c$. 
Because $M$ is $B$-saturated, every associated prime $P$ of $M$ is relevant.  
Hence, $(F_P)_\bullet$ gives an $S_P$-free resolution of $M_P$ of length $c$. 
Let $\pdim_{S_P} M_P$ denote the projective dimension of $M_P$ over $S_P$, and recall that the codimension of $\Spec(S/\Ann_S(M))$ in $\Spec(S)$ is equal to the codimension of $\Spec(S_P/\Ann_{S_P}(M_P))$ in $\Spec(S_P)$, which we record as $\codim M = \codim M_P$.  Then we obtain a contradiction because 
\[
\pdim M_P \le c<e = \codim M = \codim M_P. \qedhere
\] 
\end{proof}

Proposition~\ref{prop:equidim} motivates the definition of a \emph{virtually unmixed $S$-module}: 

\begin{defn}
Let $M$ be an $S$-module, and let $N$ denote the $B$-saturated $S$-module satisfying $\widetilde{N} = \widetilde{M}$.  We say that $M$ is \emph{virtually unmixed} if it satisfies the following conditions, which are easily seen to be equivalent.
\begin{enumerate}
\item For all relevant associated primes $P$ of $M$, $\dim S/P = \dim M$.
\item For all associated primes $P$ of $N$, $\dim S/P = \dim N$.
\item The $B$-saturation of $\Ann_S M$ is an unmixed ideal.
\item The annihilator $\Ann_S N$ is an unmixed ideal.
\end{enumerate}
\end{defn}

The following corollary is immediate from Proposition~\ref{prop:equidim}.

\begin{corollary}
\label{cor:irrelevantEmbedded}
If $M$ is a virtually Cohen--Macaulay $S$-module, then $M$ is virtually unmixed.
\end{corollary}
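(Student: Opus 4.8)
The plan is to deduce the statement from Proposition~\ref{prop:equidim} by reducing to the $B$-saturated case and then translating the conclusion through the (already asserted) equivalence of the four conditions defining a virtually unmixed module. Concretely, let $N$ denote the $B$-saturated $S$-module with $\widetilde{N} = \widetilde{M}$. Since virtual resolutions are defined only up to sheafification, $N$ and $M$ have exactly the same virtual resolutions, so $\vdim N = \vdim M$. It therefore suffices to show that $\codim N = \codim M$: then $N$ is virtually Cohen--Macaulay, and Proposition~\ref{prop:equidim} applies to $N$ to give $\dim S/P = \dim N$ for every associated prime $P$ of $N$, which is precisely condition~(2) in the definition of virtually unmixed. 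By the equivalence of conditions~(1)--(4) there, $M$ is virtually unmixed.

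To see that $\codim N = \codim M$, recall that saturating removes exactly the irrelevant $B$-torsion, so $\Ass(N)$ consists of the relevant associated primes of $M$; since a prime contained in a relevant prime is itself relevant, the minimal elements of $\Ass(N)$ are minimal in $\Ass(M)$, giving $\mathrm{Min}(N)\subseteq\mathrm{Min}(M)$ and hence $\codim M \le \codim N$. For the reverse inequality, choose $P\in\mathrm{Min}(N)$ with $\mathrm{ht}(P) = \codim N$. Because $P$ is relevant, localizing a virtual resolution of $M$ of length $\codim M$ at $P$ yields a free resolution of $N_P = M_P$ over the regular local ring $S_P$ of length at most $\codim M$; and since $P\in\Ass(N)$, the module $N_P$ has depth $0$, so the Auslander--Buchsbaum formula gives $\codim N = \mathrm{ht}(P) = \pdim_{S_P} N_P \le \codim M$. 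Hence $\codim N = \codim M$, and $N$ is virtually Cohen--Macaulay.

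The only content beyond invoking Proposition~\ref{prop:equidim} is the bookkeeping identifying $\codim N$ with $\codim M$ (equivalently, verifying that a virtually Cohen--Macaulay module has no irrelevant minimal prime of strictly smaller codimension than its relevant components), so I expect that to be the main --- and still routine --- point; once it is in place the corollary is immediate.
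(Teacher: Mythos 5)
Your proof is correct and takes essentially the same route as the paper, which simply declares the corollary immediate from Proposition~\ref{prop:equidim} applied to the $B$-saturation $N$ (using that $\widetilde N=\widetilde M$ forces $\vdim N=\vdim M$ and condition (2) of the definition of virtually unmixed). The only detail you add, the verification that $\codim N=\codim M$ via the Auslander--Buchsbaum localization argument already used in Propositions~\ref{prop:aCM=>vCM=>gCM} and~\ref{prop:equidim}, is accurate bookkeeping of the step the paper leaves implicit.
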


\raggedbottom
\def\cprime{$'$} \def\cprime{$'$}
\providecommand{\MR}{\relax\ifhmode\unskip\space\fi MR }
\providecommand{\MRhref}[2]{%
  \href{http://www.ams.org/mathscinet-getitem?mr=#1}{#2}
}
\providecommand{\href}[2]{#2}
\begin{bibdiv}
\begin{biblist}

\bib{audin}{book}{
    AUTHOR = {Audin, Mich\`ele},
     TITLE = {The topology of torus actions on symplectic manifolds},
    SERIES = {Progress in Mathematics},
    VOLUME = {93},
      NOTE = {Translated from the French by the author},
 PUBLISHER = {Birkh\"{a}user Verlag, Basel},
      YEAR = {1991},
     PAGES = {181},
}

\bib{virtual-original}{article}{
    AUTHOR = {Berkesch, Christine},
    AUTHOR = {Erman, Daniel},
    AUTHOR = {Smith, Gregory G.},
     TITLE = {Virtual resolutions for a product of projective spaces},
   JOURNAL = {Alg. Geom.},
  FJOURNAL = {Algebraic Geometry},
    VOLUME = {7},
      YEAR = {2020},
    NUMBER = {4},
     PAGES = {460--481},
}

\bib{cox:homog}{article}{
    AUTHOR = {Cox, David A.},
     TITLE = {The homogeneous coordinate ring of a toric variety},
   JOURNAL = {J. Algebraic Geom.},
  FJOURNAL = {Journal of Algebraic Geometry},
    VOLUME = {4},
      YEAR = {1995},
    NUMBER = {1},
     PAGES = {17--50},
}

\bib{cox-little-schenck}{book}{
    AUTHOR = {Cox, David A.},
    AUTHOR = {Little, John B.},
    AUTHOR = {Schenck, Henry K.},
     TITLE = {Toric varieties},
    SERIES = {Graduate Studies in Mathematics},
    VOLUME = {124},
 PUBLISHER = {American Mathematical Society, Providence, RI},
      YEAR = {2011},
     PAGES = {xxiv+841},
}

\bib{eisenbud}{book}{
    AUTHOR = {Eisenbud, David},
     TITLE = {Commutative algebra with a view toward algebraic geometry},
    SERIES = {Graduate Texts in Mathematics},
    VOLUME = {150},
 PUBLISHER = {Springer-Verlag, New York},
      YEAR = {1995},
     PAGES = {xvi+785},
}

\bib{Har}{book}{
    AUTHOR = {Hartshorne, Robin},
     TITLE = {Algebraic Geometry},
    SERIES = {Graduate Texts in Mathematics},
    VOLUME = {52},
 PUBLISHER = {Springer-Verlag, New York},
      YEAR = {2000},
      }

\bib{monomial-radical}{article}{
   author={Herzog, J.},
   author={Takayama, Y.},
   author={Terai, N.},
   title={On the radical of a monomial ideal},
   journal={Arch. Math. (Basel)},
   volume={85},
   date={2005},
   number={5},
   pages={397--408},
}

\bib{reu2019}{article}{
   author={Kenshur, Nathan},
   author={Lin, Feiyang}, 
   author={McNally, Sean}, 
   author={Xu, Zixuan}, 
   author={Yu, Teresa}, 
   title={On virtually Cohen--Macaulay simplicial complexes},
   journal={arXiv:2007.09443},
}

\bib{Lop}{article}{
  author = {Loper, Michael C.},
  title = {What makes a complex virtual},
  journal = {arXiv:1904.05994},
  }
  
  \bib{MS04}{book}{
    author = {Miller, Ezra},
    author =  {Sturmfels, Bernd},
    Title = {Combinatorial Commutative Algebra},
    SERIES = {Graduate Texts in Mathematics},
    YEAR = {2004},
    VOLUME = {227},
	PUBLISHER = {Springer-Verlag, New York},
    YEAR = {2005},
    PAGES = {xiv+417},
    }

\bib{musson}{article}{
   author={Musson, Ian M.},
   title={Differential operators on toric varieties},
   journal={J. Pure Appl. Algebra},
   volume={95},
   date={1994},
   number={3},
   pages={303--315},
}

\bib{mustata-toric}{article}{
   author={Musta\c{t}\u{a}, Mircea},
   title={Vanishing theorems on toric varieties},
   journal={Tohoku Math. J. (2)},
   volume={54},
   date={2002},
   number={3},
   pages={451--470},
}

\bib{yanagawa-sheaves}{article}{
   author={Yanagawa, Kohji},
   title={Stanley--Reisner rings, sheaves, and Poincar\'{e}--Verdier duality},
   journal={Math. Res. Lett.},
   volume={10},
   date={2003},
   number={5-6},
   pages={635--650},
}

\bib{yang-monomial}{article}{
	author={Yang, Jay}, 
	title={Virtual resolutions of monomial ideals on toric varieties}, 
	journal={Proc. Amer. Math. Soc. Ser. B, to appear},
	year={2021}
}

\end{biblist}
\end{bibdiv}

\end{document}